\newtheorem{theorem}{Theorem}[section]
\newtheorem{lemma}[theorem]{Lemma}
\newtheorem{corollary}[theorem]{Corollary}
\theoremstyle{definition}
\newtheorem{definition}[theorem]{Definition}
\newtheorem{example}[theorem]{Example}
\theoremstyle{remark}
\newtheorem{remark}[theorem]{Remark}
\title{Constructing real rational knots by gluing}
\author[1]{Shane D'Mello}
\author[2]{Rama Mishra}
\affil[1]{\href{mailto:shane.dmello@iiserpune.ac.in}{\small \tt shane.dmello@iiserpune.ac.in}}
\affil[2]{\href{mailto:r.mishra@iiserpune.ac.in}{\small \tt r.mishra@iiserpune.ac.in}}
\begin{document}
\maketitle
\begin{abstract}
  We show that the problem of constructing a real rational knot of a reasonably low degree can be reduced to an algebraic problem involving the pure braid group: expressing an associated element of the pure braid group in terms of the standard generators of the pure braid group. We also predict the existence of a real rational knot in a degree that is expressed in terms of the edge number of its polygonal representation.
\end{abstract}
\section{Introduction}

An explicit knot parametrization for knots in $S^3$ has been useful in estimating some important numerical knot~\cite{kuiper}
invariants such as bridge number, superbridge number and geometric degree. It
was shown that each knot in $S^3$ can be parametrized by an embedding from $\mathbb{R}$ to $\mathbb{R}^3$ of the form $t\to (f(t),g(t),h(t))$ where $f(t),g(t)$ and $h(t)$ are real polynomials~\cite{shastri}. The image of such an embedding is a long knot which upon one point compactification gives our knot in $S^3$. In this connection the question of obtaining the polynomials of minimal degree~\cite{mishra1, mishra2} for a given knot type was explored.
Polynomial parametrizations have certain disadvantages. For instance, polynomial
parametrizations of two knots $K_1$ and $K_2$ cannot be combined to give a
polynomial parametrization of the connected sum $K_1 \# K_2$.  On the other hand
if  we take the projective closure of a  polynomial knot of degree $d$  in
$\mathbb{R}^3$ we obtain a knot in the real projective 3-space $\mathbb{RP}^3$
that intersects the plane at infinity  at one point with multiplicity $d$ which
upon small perturbation turns out to be projective closure of an embedding of
the form $t\to (f_1(t),g_1(t),h_1(t))$ where $f_1$, $g_1$ and $h_1$ are rational
functions.  So, in general, we can study the projective closure of the
embeddings $t \to (f(t),g(t),h(t))$ where $f,g$ and $h$ are rational functions. These are knots in $\mathbb{RP}^3$ (for general, non-algebraic, links in $\mathbb{RP}^3$ see~\cite{drobotukhina1990analogue}). Therefore we define:

\begin{definition}
 A knot in $\mathbb{RP}^3$ is said to be a \emph{real rational knot} of degree $d$ if its parametrization can be realized by a rational map $k:\mathbb{RP}^1 \to \mathbb{RP}^3$, i.e. there exist homogenoeus polynomials $p_0, p_1, \ldots, p_3$, all of the same degree~$d$, so that for any $[s,t]\in \mathbb{RP}^1$, $k(s,t) = [p_0(s,t), p_1(s,t), \ldots, p_3(s,t)]$. $k$ is called the \emph{real rational representative} of the knot.
\end{definition}

  Using a Weierstrass approximation like argument, it can be shown that all
  knots (smooth) in $\mathbb{RP}^3$ are isotopic to the projective closure of a
  real rational knot, but it gives no information about the degree. Therefore, a
  natural problem is to classify all the possible real rational knots of a given
  degree. This was done for low degrees in~\cite{bjorklund}. To classify, one
  can find restrictions on the number or type of isotopy classes and find
  examples to account for those that are permitted by the known restrictions. In
  this paper we focus constructing examples; more specifically, we focus on the
  ``gluing technique'' introduced by Bj{\"o}rklund in \cite{bjorklund} to construct real rational representatives of each of the (projective) knots with less than 5 crossings. However, unlike the examples in \cite{bjorklund}, we aim for methods involving gluing that produce real rational representatives for either all knots or interesting classes containing infinite knots, with a control on the required degree.

The gluing construction~\cite{bjorklund} allows one to glue two knots,
intersecting transversally at a single point, to form a knot with the sum
of the degrees. In his paper that introduces gluing, Bjorklund constructed
examples of real rational knots of low degrees  by gluing conics and lines.
Three natural questions arise:
\begin{enumerate}
 \item Can one obtain a representative for any knot by gluing only lines and/or conics?
\item Can one predict the number of lines/conics required and
  therefore the required degree, in terms of some number associated to the
  classical knot?
\item Can one find a general method of gluing certain families of knots (like
  the Pretzel knots), and hopefully obtain a lower bound on the required degree. \end{enumerate}
 The upper bound can be in terms of a number associated to a classical knot,
 for instance the minimal edge number of a polygonal representation or the
 minimal number of certain words in a braid group representation. For specific
 classes of knots like the $(k_1,\ldots, k_n)-$~pretzel knot, one may get a bound in terms of the $k_i$. 

There are two apparent difficulties when trying to glue conics or lines to
obtain all possible knots. We will demonstrate that both these difficulties can be overcome owing to some basic properties that the gluing construction possesses.

The first difficulty is that conics and lines are geometrically rigid, making
local changes difficult in real rational knots, where as in classical knots, it
is the topology and not the geometry, which matters. For instance, while gluing,
one has to account for the possibility of additional crossings being imposed on
us in a particular diagram. Nevertheless, these crossings can be rendered
harmless by crossing changes~(see Theorem~\ref{switchingcrossings}), which is
also obtainable by gluing. The price one pays for that is an increase in degree
by 2, for each crossing change. Therefore, it becomes important to ensure that
either additional crossings are avoided, or that they can be removed by
Reidemeister moves. The latter is equivalent to finding a suitable
representative of the isotopy class, which is not necessarily the ``standard''
or simplest one. We will see that the braid group approach achieves the latter, i.e. it finds a suitable isotopic representation of a knot which can be constructed by gluing and does not require
any crossing changes.

The second difficulty is that the two knots must be glued at precisely one
point. Gluing at more than one point would inevitably require a ``self gluing''.
Gluing takes two distinct knots that intersect transversally and smoothens the
intersection algebraically. One may ask if we can algebraically smoothen the
self-intersection of a single singular knot, thereby making the method more flexible. This is not possible because if a real rational knot intersects itself, any self-gluing (smoothening of the self-intersection) will result in a curve of genus bigger than 0 and therefore not rational.

Despite these difficulties, we will see that the gluing construction is flexible
enough and can be connected with certain perspectives in classical knot theory. We begin with some simple but important theorems, the first of which is that it is possible to switch a crossing by gluing an ellipse which will increase the
degree of the knot by 2. Secondly, by choosing an appropriate orientation of a circle, one can glue the circle to form a twist. This has been used implicitly by
Bj\"orklund in his constructions of knots with crossings that are less than or
equal to 4 and using this method, it is easy to construct all the pretzel knots and (2,k)-torus knots.

We then turn our attention to a general construction of all possible knots.

We will demonstrate that the isomorphism between the quotient by the braid group
by the pure braid group with the permutation group, along with the standard
generators of the braid group perfectly fit together to allow the gluing of all
knots. We will predict the existence of a real rational representative in a
degree that can be expressed in terms of the minimal number of pure braid group generators.

Polygonal representations of knots are the most geometric and indeed we will
demonstrate two ways of constructing real rational knots using polygonal
representations and obtaining a bound in terms of the edge number, connecting
the complexity of polygonal representations, which is measured in terms of the edge number, with the complexity of real rational representatives, which is measured in terms
of the degree.

Each method has its advantages and disadvantages and it is unlikely that any one
method will consistently produce a knot of a degree lower than the others.

\section{The Basics of Gluing}

\begin{figure}[t]
\begin{center}
  \begin{tabular}{cc}
  \includegraphics[width=4cm]{./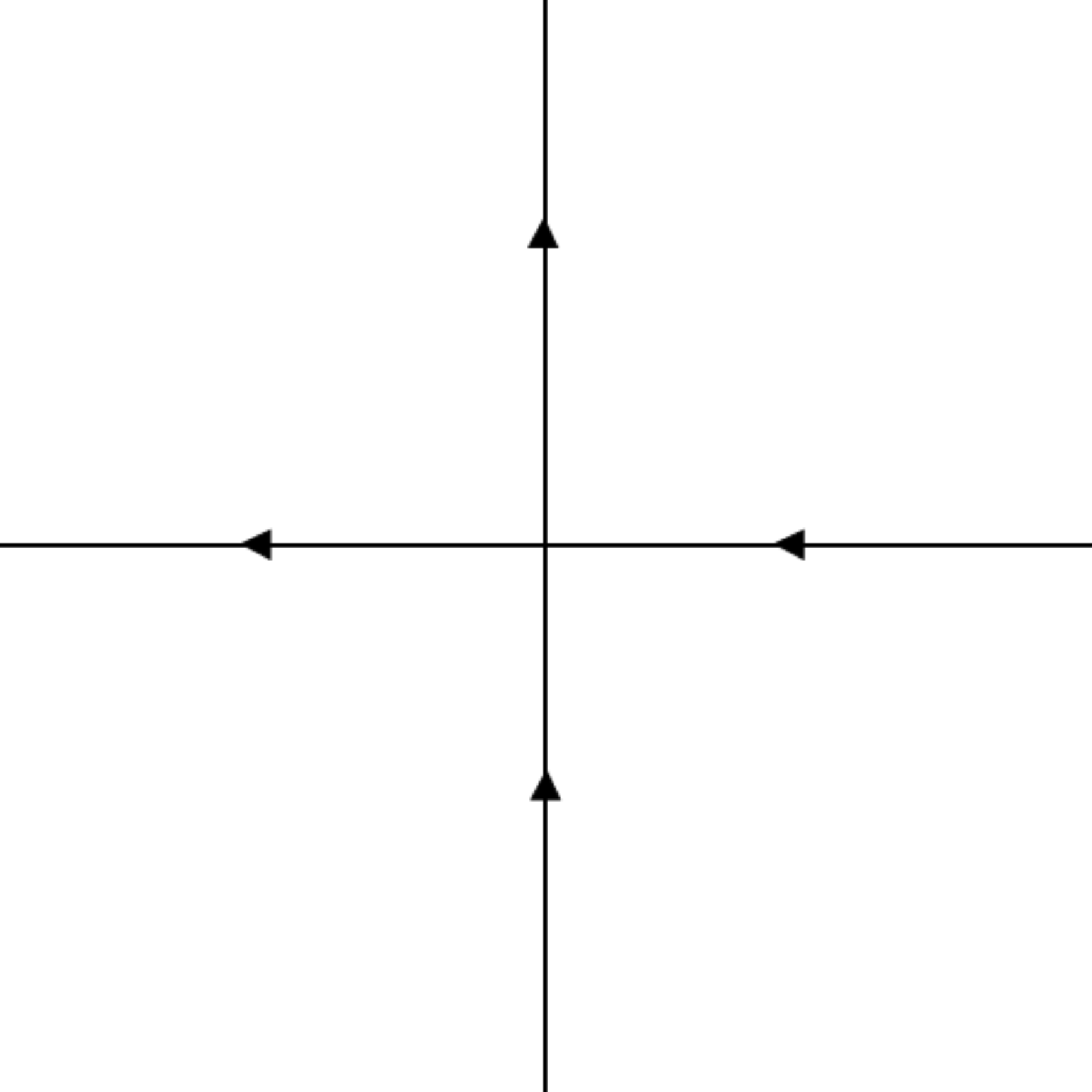} &
  \includegraphics[width=4cm]{./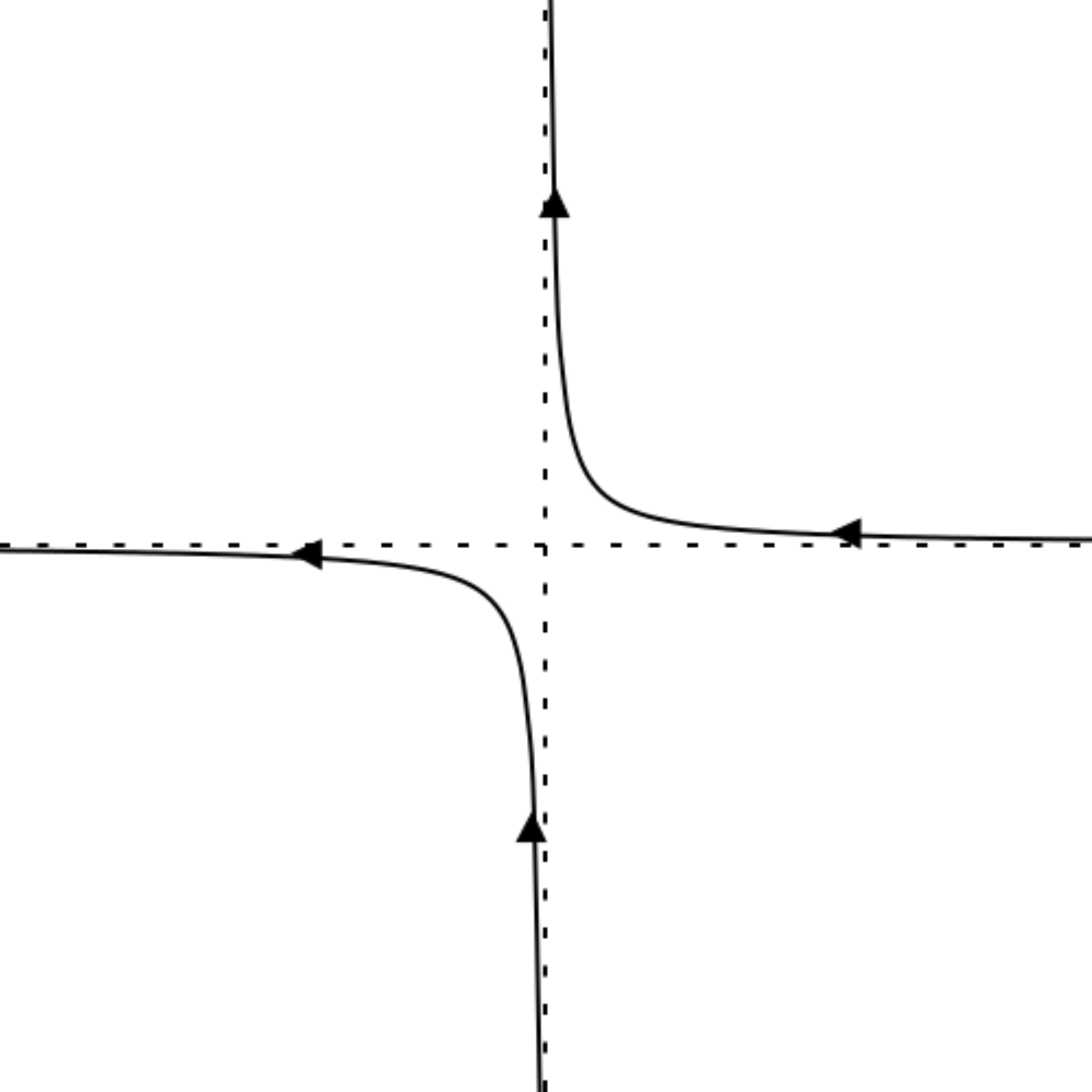} \\
 Two parametrizations& Glued parametrization \\
  \end{tabular}
  \caption{Alternative viewpoint for gluing}
\label{fig:gluingAlternate}
\end{center}
\end{figure}
The following theorem is reworded from~\cite{bjorklund}

\begin{theorem}
  \label{maingluingtheorem}
  Consider two real rational knots $k_1$ of degree~$d_1$ and $k_2$ of
  degree~$d_2$ that intersect in a point $p$. Then there exists a real rational knot $k$ of degree~$d_1+d_2$ called the glued curve, which has the following properties:

\begin{enumerate}
  \item Except for a small neighbourhood $U$ around $p$, the knot $k$ is a section
    of a tubular neighbourhood of the union of $k_1$ and $k_2$. 
  \item One can choose coordinates of $U$ so that $k_1 \cup k_2$ in $U$ is
    like a pair of straight lines intersecting at $p$ while $k$ is a hyperbola.
\item There exist one point on each curve, both of which, remain unchanged in the glued curve. Furthermore, the tangent lines to these points are parallel to the tangent lines of the original curves, and the ratio of their magnitudes can be chosen to be equal.
\end{enumerate}

\end{theorem}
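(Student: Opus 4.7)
\medskip
\noindent\textbf{Proof proposal.}
My plan is to construct the glued curve explicitly in affine coordinates centred at $p$, then verify each property in turn. First I would choose affine coordinates on $\mathbb{R}^3 \subset \mathbb{RP}^3$ so that $p$ is the origin and the tangent directions of $k_1$ and $k_2$ at $p$ are the first two coordinate axes; and I would simultaneously reparametrize the source $\mathbb{RP}^1$ of each knot so that $k_1$ hits $p$ at parameter value $t = \infty$ and $k_2$ hits $p$ at $t = 0$. In these normalized coordinates, write $k_1(t) = (t, 0, 0) + \text{(higher order corrections that vanish to suitable order at }t=\infty)$ and similarly for $k_2$ near $0$; more precisely, dehomogenize the given maps $[p_0,\ldots,p_3]$ to obtain rational vector-valued functions $K_1(t), K_2(t)\in\mathbb{R}^3$ of degrees $d_1,d_2$ respectively.

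Next I would write down the candidate glued parametrization as a one-parameter family
\begin{equation*}
K_\varepsilon(t) \;=\; K_1(t) \;+\; K_2(\varepsilon/t),
\end{equation*}
for small $\varepsilon>0$. Clearing the denominator $t^{d_2}$ turns this into a map of degree $d_1 + d_2$ in $t$, and homogenizing gives the required rational map $\mathbb{RP}^1\to\mathbb{RP}^3$ of degree $d_1+d_2$. The key structural observation is: for $|t|$ bounded away from $0$ the second summand tends uniformly (with all derivatives) to $K_2(0) = p = 0$ as $\varepsilon \to 0$, and dually for $|t|$ bounded away from $\infty$ the first summand behaves like a small perturbation; so on each ``half'' of the source, $K_\varepsilon$ is $C^\infty$-close to one of $k_1, k_2$. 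Outside a neighbourhood $U$ of $p$ this $C^\infty$-closeness lets me identify $K_\varepsilon$ with a section of a tubular neighbourhood of $k_1\cup k_2$, giving property~(1). Inside $U$, I would restrict to the linear parts: in the chosen coordinates the leading behaviour is $K_\varepsilon(t) \approx (t,\varepsilon/t,0)$, the graph of the hyperbola $xy = \varepsilon$ in the first two coordinates with third coordinate higher order in $\varepsilon$; a smooth change of chart on $U$ straightens the higher-order terms, giving property~(2).

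For property~(3), the invariant points are essentially those where the ``other'' summand vanishes. Choose $t_1$ so that $K_2(\varepsilon/t_1) = 0$ (e.g.\ by picking $t_1$ so that $\varepsilon/t_1$ is a prescribed zero of $K_2$, or, failing that, any specific reference value, and rescaling afterwards); then $K_\varepsilon(t_1) = K_1(t_1)$ exactly, and the same computation on the derivative shows $K_\varepsilon'(t_1) = K_1'(t_1) + O(\varepsilon)$, so after a final affine reparametrization of the source that absorbs the $O(\varepsilon)$ corrections, the tangent line at the invariant point of $k_1$ stays parallel to the original tangent. The symmetric argument works for a point on $k_2$, and the common parameter $\varepsilon$ gives the freedom to equalize the two tangent-vector magnitudes.

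The main obstacle I anticipate is \emph{not} the local hyperbolic picture, which is essentially a change-of-variables exercise, but rather ensuring that $K_\varepsilon$ is genuinely a smooth embedding of the full $\mathbb{RP}^1$ for sufficiently small $\varepsilon$: one must rule out new self-intersections of $K_\varepsilon$ away from $U$, new singular parameter values, and interactions between the two ``halves'' of the parametrization inside $U$. This requires a careful $\varepsilon$-uniformity argument combining the tubular-neighbourhood estimate on the complement of $U$ with the explicit hyperbolic local model inside $U$; transversality of the original intersection at $p$ is what ensures the two local branches $t\to \infty$ and $t\to 0$ of $K_\varepsilon$ knit together smoothly without introducing a node. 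Once that uniform $\varepsilon$-control is in place, the three listed properties follow from the explicit construction above.
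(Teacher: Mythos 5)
Your construction is essentially the one in the paper: there, too, the glued knot is obtained by restricting the ``sum'' map $F(a,b)=K_1(a)+K_2(b)$ (written projectively as $F=[f_0g_0:f_1g_0+f_0g_1:f_2g_0+f_0g_2:f_0g_3+f_3g_0]$ on $\mathbb{RP}^2$) to a hyperbola that degenerates onto the union of the two coordinate lines, and the two fixed points of part~(3) are likewise identified as the points where the hyperbola is tangent to those lines. The one thing you must repair is your normalization: you send the $p$-parameter of $k_1$ to $t=\infty$ and that of $k_2$ to $t=0$, but then substitute $\varepsilon/t$. For $K_\varepsilon(t)=K_1(t)+K_2(\varepsilon/t)$ to trace out $k_2$ on the range $|t|\lesssim\varepsilon$ you need the first summand to vanish there, i.e.\ $K_1(0)=p=0$; as written, that portion of your curve is the translate of $k_2$ by the nonzero vector $K_1(0)$. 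So either normalize \emph{both} knots to pass through $p$ at parameter $0$ (your expansion $K_1(t)=(t,0,0)+\cdots$, your local model $(t,\varepsilon/t,0)$, and your fixed point at $t_1=\infty$ are already consistent with this reading), or keep your stated normalization and substitute $\varepsilon t$ instead of $\varepsilon/t$. Two further remarks: the paper obtains exact parallelism of the tangents at the fixed points, with no $O(\varepsilon)$ error to absorb by reparametrization, because the hyperbola is genuinely tangent to the coordinate lines at $[0:1:0]$ and $[0:0:1]$; and the $\varepsilon$-uniform embeddedness issue you rightly flag at the end is not carried out in the paper either, which defers parts~(1) and~(2) to Bj\"orklund.
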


  The proof of 1 and 2 is in~\cite{bjorklund}.  For 3, we will
  reformulate gluing in a way that is more suitable for it and can also be an
  alternative proof for 1 and 2.

\begin{proof}
Let $k_1 : \mathbb{RP}^1 \to \mathbb{RP}^3$  and $k_2 : \mathbb{RP}^1 \to
\mathbb{RP}^3$ be two parametrizations of knots which intersect at a point.
Choose coordinates so that the point of intersection is $[1:0:0:0]$.
Also choose coordinates of the domains, so that $k_i(1,0) = [1:0:0:0]$ for
$i=1,2$.

Consider the projective plane with homogeneous coordinates $[x_0, x_1, x_2]$.
Then the lines defined by $x_1=0$ and $x_2=0$, being copies of the projective
line, may be used as domains for the parametrizations $k_1$ and $k_2$,
respectively (figure~\ref{fig:gluingAlternate}).

The line $x_1=0$ is parametrized by $e_1 (s,t) = [s : t : 0]$ and the line $x_2=0$ is parametrized by $e_2 (s,t) = [s : 0 : t]$. Therefore, given a parametrizations $k_1$ and $k_2$ we can define maps $\tilde{k_i} : \{x_i=0\} \to \mathbb{RP}^3$ by $\tilde{k_i} = k \circ e_i^{-1}$.

Now consider the map $\theta (s,t) \to [st : \epsilon_1 t^2, \epsilon_2 s^2]$, which parametrizes
the hyperbola $x_1x_2=\epsilon_1 \epsilon_2 x_0^2$. Denote this hyperbola by $H$.

Let $\tilde{k_1}= [f_0: f_1: f_2: f_3]$ and $\tilde{k_2}= [g_0:g_1:g_2:g_3]$, then the function $F
= [f_0g_0: f_1g_0 + f_0g_1: f_2g_0 + f_0g_2 : f_0g_3+f_3g_0]$, is an extension
of $\tilde{k_1}$ and $\tilde{k_2}$ to all of $\mathbb{RP}^2$. Observe that $k_i
= F \circ e_i$. 

Let $U$ be a small neighbourhood around $[1:0:0:0]$. Owing to the continuity of
$F$, for a small enough $\epsilon_1$ and $\epsilon_2$, the restriction of $F$ to
the $H \setminus U$ $\theta$ lies in the tubular neighbourhood of the union of
the images of $k_1$ and$k_2$. Therefore, $F \circ \theta$  defines the glued
curve. Indeed, it is straightforward to check that $F \circ \theta$ is precisely
the parametrization of the glued curve as given in~\cite{bjorklund}. 

$H$ is tangential to each of the two lines $\{x_1 = 0\}$ and $\{x_2 =0\}$ at the points $\theta (0,1) = [0, 1, 0]$ and $\theta (1, 0) = [0, 0, 1]$ respectively.
Therefore, $F \circ \theta(1,0) = k_1(1,0)$ and $F \circ \theta(0,1) =
k_2(0,1)$ and the directions of the respective tangents are preserved. By choosing appropriate $\epsilon_1$ we can ensure that the magnitudes are also the same.
\end{proof}

\begin{figure}[t]
\begin{center}
  \begin{tabular}{cc}
  \includegraphics[width=4cm]{./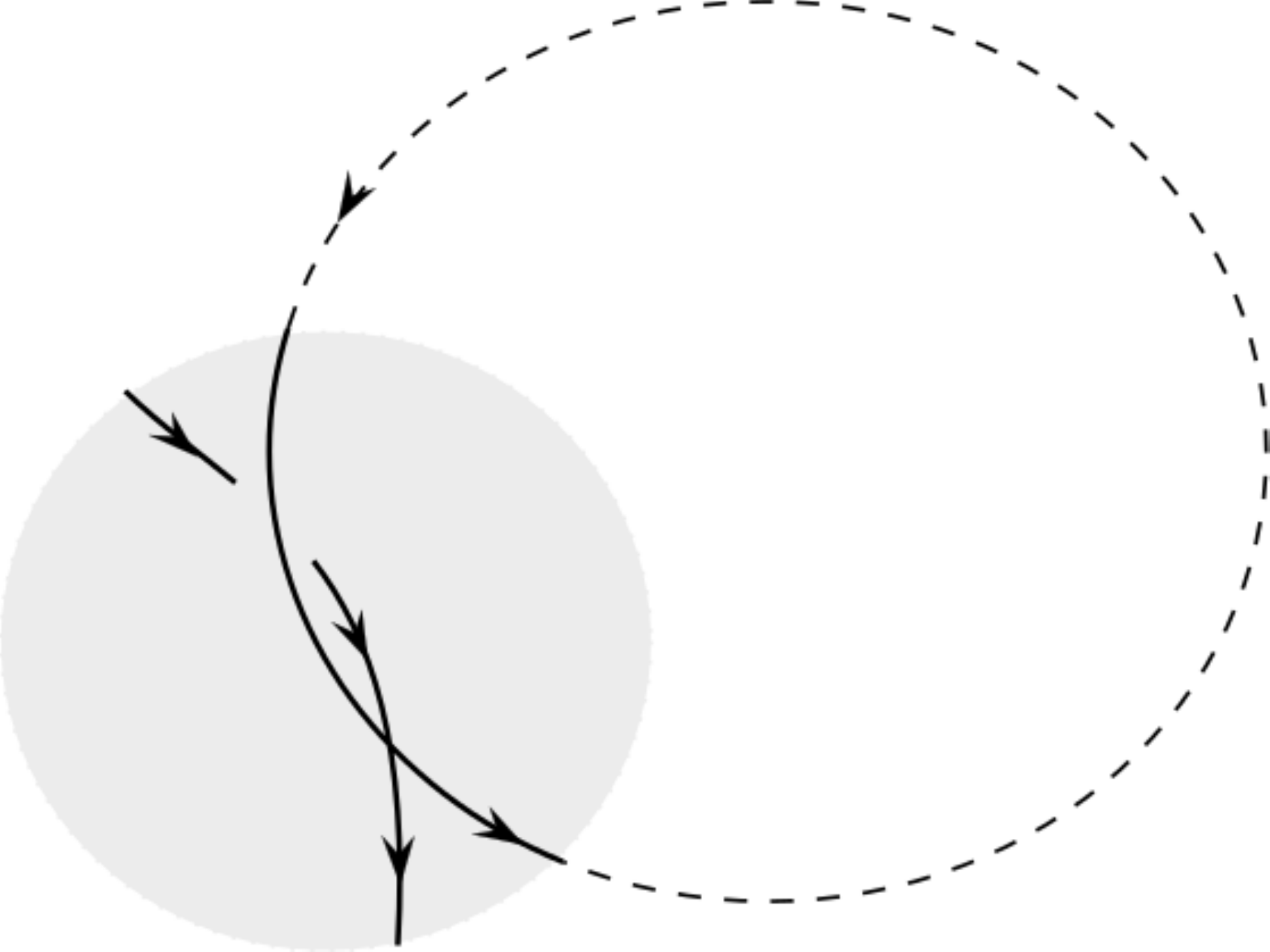} &
  \includegraphics[width=4cm]{./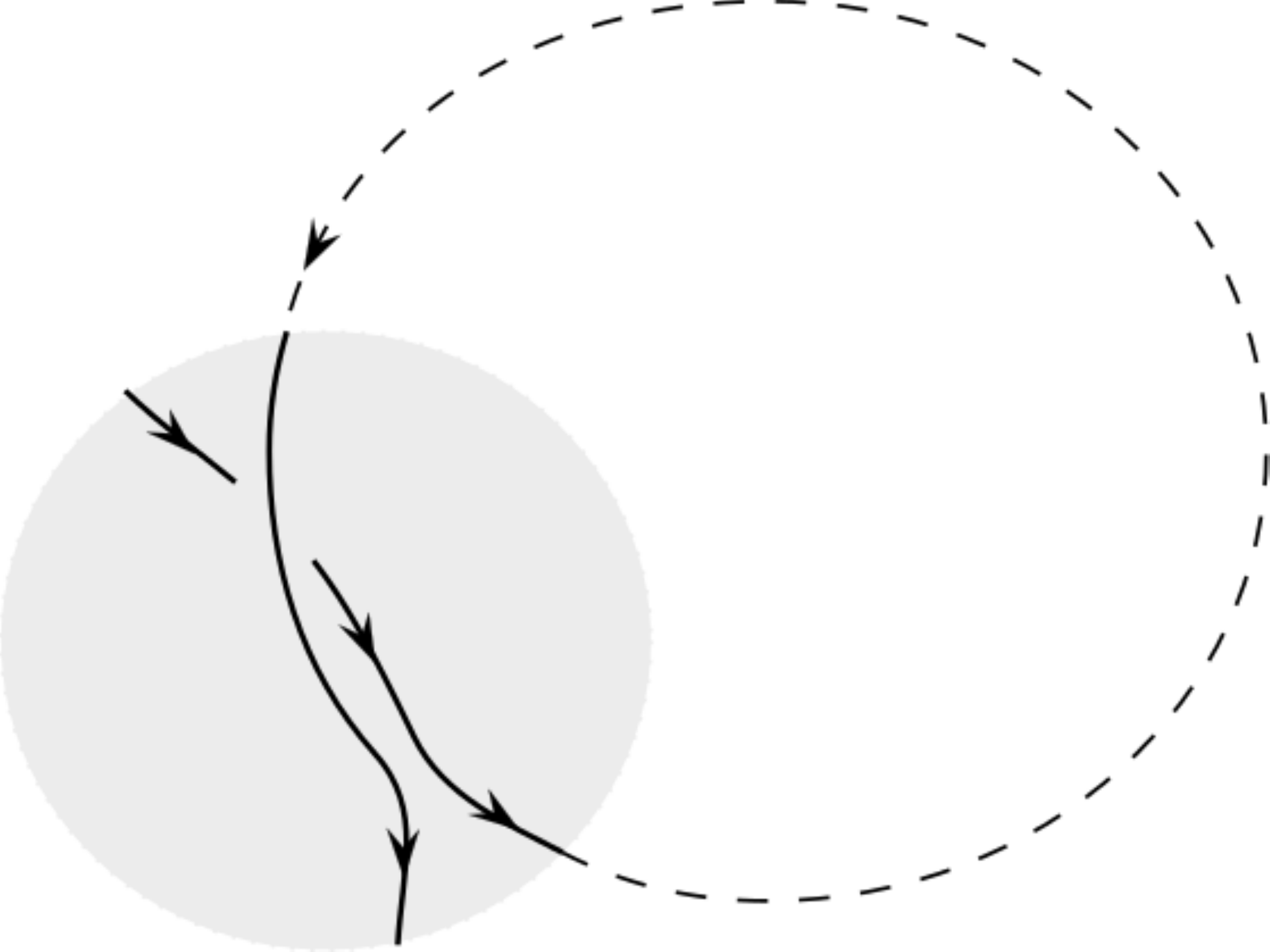} \\
circle oriented clockwise & twist \\
& \\
  \includegraphics[width=4cm]{./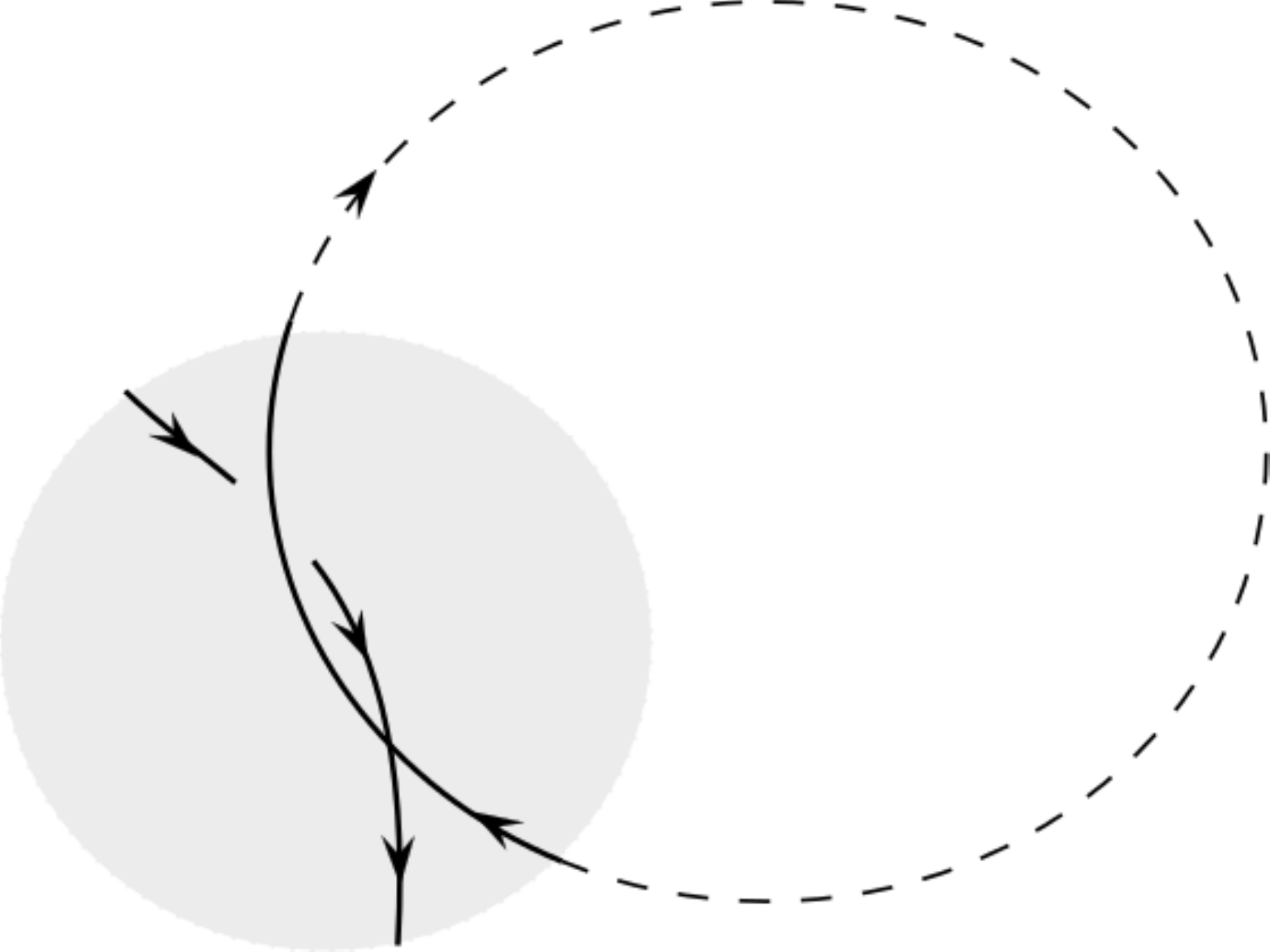} &
  \includegraphics[width=4cm]{./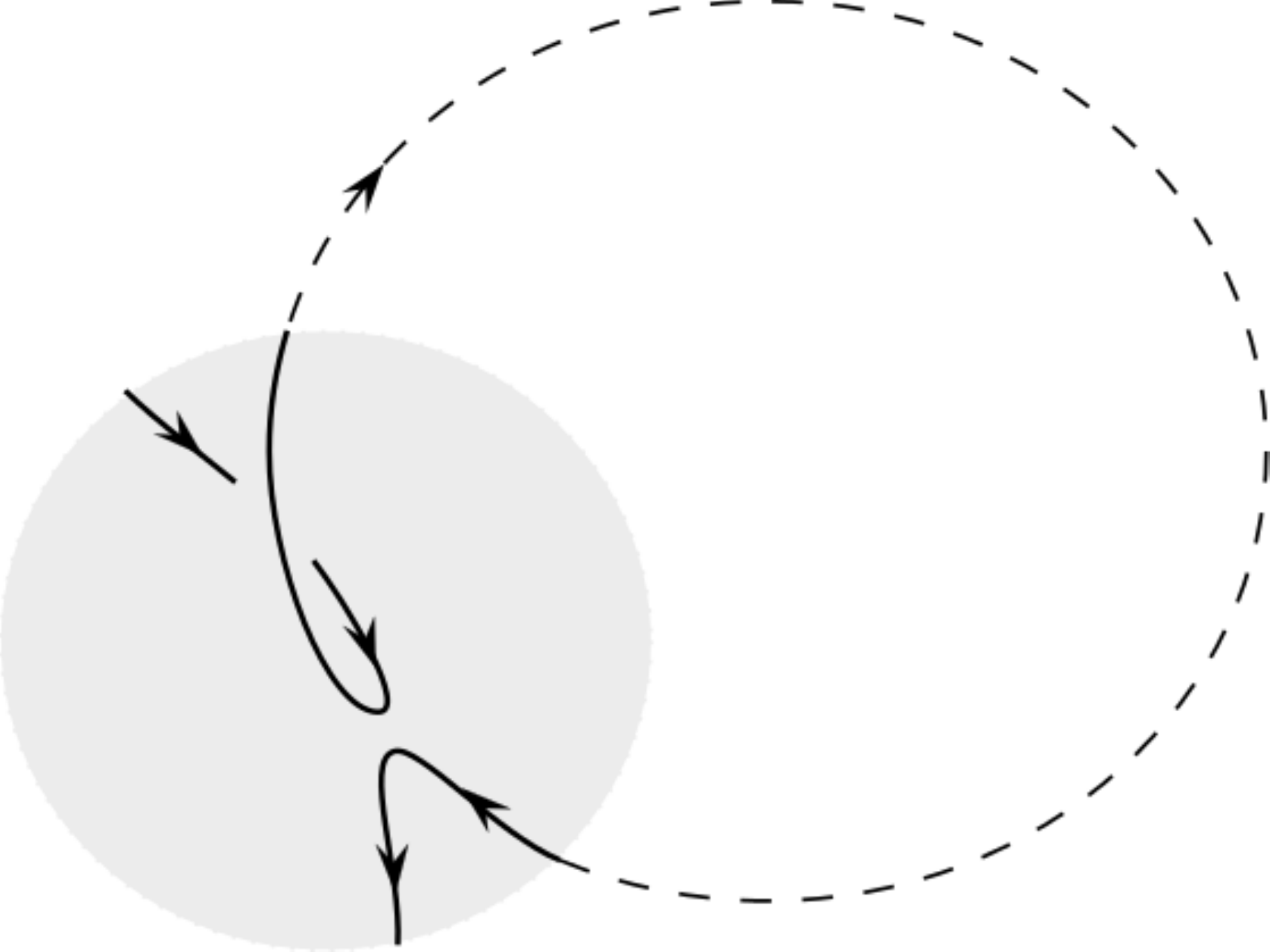}\\
circle oriented ant-clockwise & untwist \\
  \end{tabular}
  \caption{Gluing a circle to form a twist or untwist. Reversing the orientation of the circle changes a twist to an untwist.}
\label{fig:twistuntwist}
\end{center}
\end{figure}

\begin{theorem}
  Let   $k$ be a real rational knot in $\mathbb{RP}^3$, $p \in \mathbb{RP}^3$ a point  that is not on $k$, and $\pi_p$ denote the projection from the point $p$ to $\mathbb{RP}^3$. Then one can glue  a circle $C$ in $\mathbb{RP}^3$ to $k$ at a point $q$ so that $\pi(C \cup k)$, along with the over crossing and under-crossing information, is the digram of a twist. 
\end{theorem}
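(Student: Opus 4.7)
The plan is to construct the circle $C$ explicitly in local affine coordinates near $q$ and to read off the local diagram from the projection. Introduce affine coordinates $(x,y,z)$ in a neighbourhood of $q$ so that $q$ is the origin, the tangent line to $k$ at $q$ is the $x$-axis, and $\pi_p$ becomes the standard map $(x,y,z)\mapsto(x,y)$. Such a normalization is available after a projective change of coordinates, since $p\notin k$ and $k$ is smooth at $q$.

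I would then take $C$ to be a small circle through $q$ lying in a plane $\Pi$ through $q$ that is transversal to $k$ at $q$ and is slightly tilted with respect to the $(x,y)$-plane, so that $\pi(C)$ is a genuine ellipse and not a degenerate segment. Shrinking $C$ if necessary ensures that $C\cap k=\{q\}$ globally in $\mathbb{RP}^3$. The orientation of $C$ within $\Pi$ then controls whether the arcs of $C$ leaving $q$ enter the half-space $z>0$ or $z<0$, and this choice propagates, via the projection, to the over/under convention at any diagrammatic crossings between $\pi(C)$ and $\pi(k)$.

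Next, apply Theorem~\ref{maingluingtheorem} to smooth the intersection at $q$. The glued curve is locally a hyperbola whose two branches depend on the signs $\epsilon_1,\epsilon_2$ appearing in the parametrization $\theta(s,t)=[st:\epsilon_1 t^2:\epsilon_2 s^2]$ from the proof of that theorem, and these signs are controlled precisely by the orientation of $C$ at $q$ (via property~3 of the gluing theorem, which allows us to prescribe the tangent directions and match their magnitudes). Projecting the glued curve, one obtains the two strands of a local crossing whose over/under arrangement is exactly the twist pictured in Figure~\ref{fig:twistuntwist}. Reversing the orientation of $C$ negates both $\epsilon_i$, interchanges the two branches of the local hyperbola, and simultaneously reverses the half-space into which the arcs of $C$ leave $q$; both effects flip the local crossing, converting a twist into an untwist.

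The hard part will be the sign bookkeeping, that is, verifying that the clockwise orientation as seen in the projection does correspond specifically to the twist rather than the untwist. This reduces to an explicit local computation tracking how the parametrization of $C$ determines the signs $\epsilon_i$, and how these in turn determine the over/under relation of the projected hyperbola branches. By the orientation-reversal symmetry above, verifying the sign in a single orientation is enough to settle both cases.
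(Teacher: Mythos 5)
Your construction diverges from the paper's at the crucial point, and the divergence hides a genuine gap: you never account for where the crossing of the twist actually comes from. The gluing at $q$ \emph{resolves} the double point $\pi(q)$ of the projection into a smooth (or at most once-crossing) pair of arcs; by itself this local hyperbola does not produce a twist. The paper's proof arranges the circle so that $\pi(C)$ is \emph{tangent} to $\pi(k)$ at $\pi(q)$ and then perturbs, which forces a second intersection point $q'$ of the two projections near $q$ --- an intersection of the projections only, not of the curves in space --- and it is this $q'$ that survives the gluing as the crossing of the twist, while the resolution at $q$ (controlled by the orientation of $C$) decides twist versus untwist. Your setup (a small circle in a plane transversal to $k$, analysed only through the branches of the local hyperbola at $q$) neither exhibits such a $q'$ nor controls its position or its over/under datum, so the conclusion that the projected glued curve ``is exactly the twist pictured in Figure~\ref{fig:twistuntwist}'' is not justified.

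Two of your supporting claims are also incorrect as stated. First, the orientation of $C$ is a choice of traversal direction of a fixed point set; it cannot control ``whether the arcs of $C$ leaving $q$ enter the half-space $z>0$ or $z<0$,'' nor the over/under convention at crossings of $\pi(C)$ with $\pi(k)$ --- that datum is fixed by the positions of $C$, $k$ and $p$. The orientation enters only through which pairs of local branches the gluing connects at $q$. Second, the sign bookkeeping you defer to is based on a false symmetry: the local model is the hyperbola $x_1x_2=\epsilon_1\epsilon_2x_0^2$, so negating \emph{both} $\epsilon_i$ leaves the product $\epsilon_1\epsilon_2$, hence the hyperbola and the resolution, unchanged; it does not interchange the two branches. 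To repair the argument you would need to (i) position $C$ so that $\pi(C)$ meets $\pi(k)$ in exactly one extra point $q'$ close to $q$ (the paper's tangency-plus-perturbation does this), and (ii) separate the two independent pieces of data: the over/under at $q'$, fixed by geometry, and the resolution at $q$, fixed by orientation, and check that the two resulting diagrams are the twist and the untwist of Figure~\ref{fig:twistuntwist}.
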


\begin{proof}
  Glue the circle so that the projection is tangent to the projection of $k$ at the
  point $q$. Then, by a slight perturbation one can ensure that the projection
  of the circle intersects $k$ in $q$ and a point $q'$ that lies in a small enough
  neighbourhood of $q$. Now it is easy to see that on gluing the double point
  $q$ is resolved in one of two ways in the projection, depending on the
  orientation of $C$. Now $q'$ appears either as an over or under-crossing. In
  each case we get one of the cases shown in Figure~\ref{fig:twistuntwist}; the other
  is obtained by reversing the orientation of $C$.
\end{proof}

\begin{figure}[h]
  \begin{center}
 \includegraphics[height=4cm]{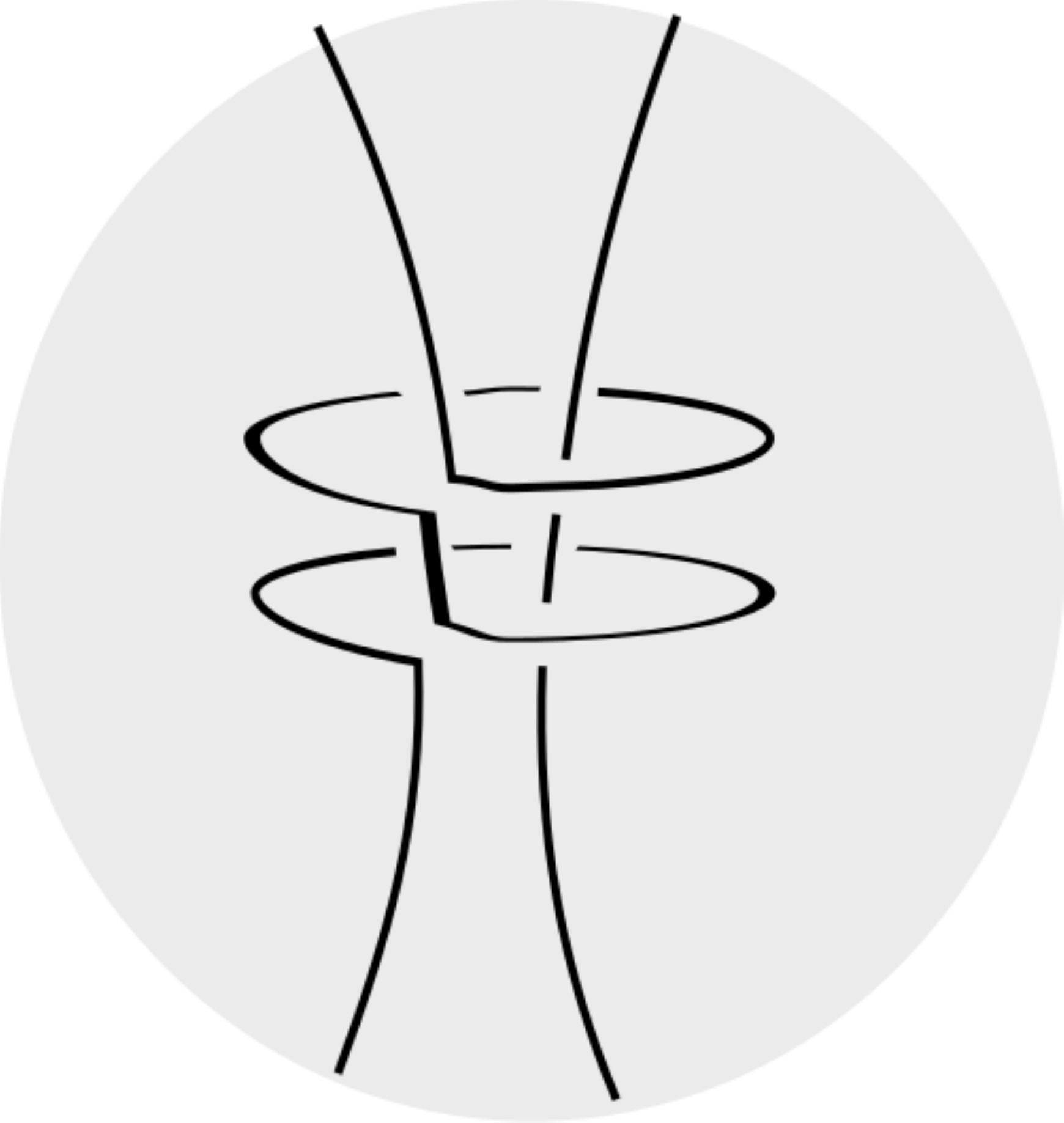} 
 \caption{Two double twists}
 \label{fig:doubletwist}
  \end{center}
\end{figure}

\begin{theorem}
 Consider a real rational knot and suppose that there exists a ball which
 intersects it in two non intersecting lines. By gluing ellipses, one can introduce double twists to the strands.
\end{theorem}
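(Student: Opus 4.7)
The plan is to produce each double twist by gluing a single suitably-placed ellipse to the knot at one point on one of the two strands, using Theorem~\ref{maingluingtheorem}. The ellipse will be arranged so that its interior sits mostly off the projection plane, and so that its projection crosses the second strand twice with the same over/under information, giving the desired double twist.

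Concretely, I would first choose affine coordinates in the ball so that the two disjoint strands of the knot inside the ball become parallel line segments $A$ and $B$ in the $xy$-plane, and use the projection onto the $xy$-plane. I would then construct a degree-$2$ real rational curve $E$ (an ellipse) which meets strand $A$ transversally at exactly one interior point $p$, is disjoint from $B$ and from the rest of the knot, and lies in an affine plane tilted relative to the $xy$-plane so that $E \setminus \{p\}$ is contained in the half-space $z > 0$. By choosing $E$ long and narrow enough in the direction from $A$ to $B$, its $xy$-projection is an ellipse whose image crosses the segment $B$ in exactly two points, and since $E$ lies strictly above the $xy$-plane away from $p$, both of these projected crossings are over-crossings relative to $B$.

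Applying Theorem~\ref{maingluingtheorem} to $k$ and $E$ at the point $p$ then yields a real rational knot of degree $d+2$. Property~1 of that theorem guarantees that outside a small neighbourhood of $p$ the glued curve lies in a tubular neighbourhood of $k \cup E$, so in the chosen projection the new knot follows $A$ up to the vicinity of $p$, traces a small perturbation of $E$ around past strand $B$ (crossing over it twice), and returns to $A$. This is precisely the diagram of a double twist between the two strands, matching one of the two forms in Figure~\ref{fig:doubletwist}. The opposite sign is obtained by instead placing the ellipse in the half-space $z<0$, and iterating with further disjoint ellipses produces any prescribed number of double twists.

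The main technical point is arranging $E$ so that it meets the knot \emph{only} at $p$ (so that Theorem~\ref{maingluingtheorem} applies) while its projection still crosses $B$ in exactly two points rather than zero or with a tangency; both requirements can be satisfied by confining $E$ to a thin tubular region around the planar strip spanned by $A$ and $B$ and using the remaining freedom in the tilt, centre, and eccentricity of $E$ to calibrate its 3-dimensional and projected intersections independently.
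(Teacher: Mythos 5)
There is a genuine error in the key geometric step. You arrange the ellipse $E$ so that, away from the gluing point $p$ on strand $A$, it lies entirely in the half-space $z>0$, and you conclude that because its projection crosses strand $B$ twice with $E$ on top both times, the glued curve acquires a double twist. It does not. A loop based at $p$ that stays entirely on one side of the plane containing $B$ is contractible in the complement of $B$: the two crossings it creates with $B$ in the diagram have opposite signs (the loop crosses $B$ in one direction going out and the opposite direction coming back, with the same strand on top) and cancel by a Reidemeister~II move, so the tangle you produce is isotopic rel boundary to the original two parallel strands. Equivalently, a genuine double twist $\sigma_1^{\pm 2}$ changes the linking/writhe count of the two strands by $\pm 1$, whereas your loop contributes $0$. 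In the clasp picture of a double twist, the finger of $A$ must pass \emph{over} $B$ on the way out and \emph{under} $B$ on the way back (or vice versa); that is, the attached loop must link $B$.

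The fix is exactly what the paper's (terse) proof via Figure~\ref{fig:doubletwist} indicates: take the ellipse ``horizontal'', i.e.\ lying in a plane transverse to the two strands, meeting $A$ transversally at the single point $p$ and bounding a disc that $B$ punctures once. Then $E$ links $B$, its projection crosses $B$ once over and once under, and gluing at $p$ via Theorem~\ref{maingluingtheorem} turns strand $A$ into a strand that wraps once around $B$ --- which is the double twist; the sign is controlled by the tilt/orientation of the ellipse. The remaining parts of your argument (degree increase of $2$ per twist, keeping $E$ disjoint from the rest of the knot, and iterating for several twists) are fine once this is corrected.
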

\begin{proof}
Figure~\ref{fig:doubletwist} shows how one can glue ``horizontal'' ellipses inside the ball to introduce double twists.
\end{proof}

\begin{figure}[h]
  \begin{tabular}{ccccc}
  \includegraphics[width=2cm]{./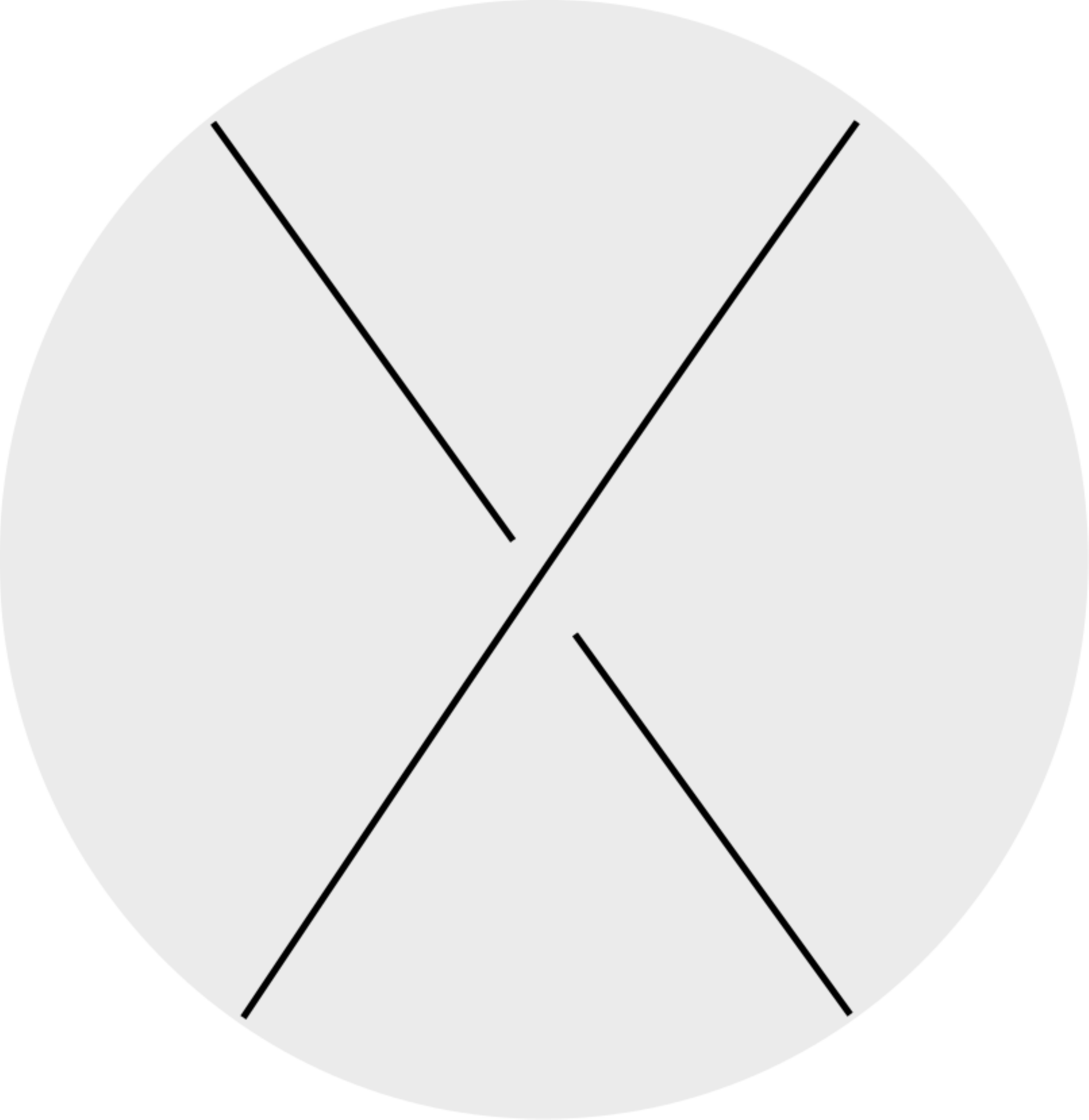}& 
{\LARGE$\xrightarrow{\mathrm{gluing}}$} &
  \includegraphics[width=2cm]{./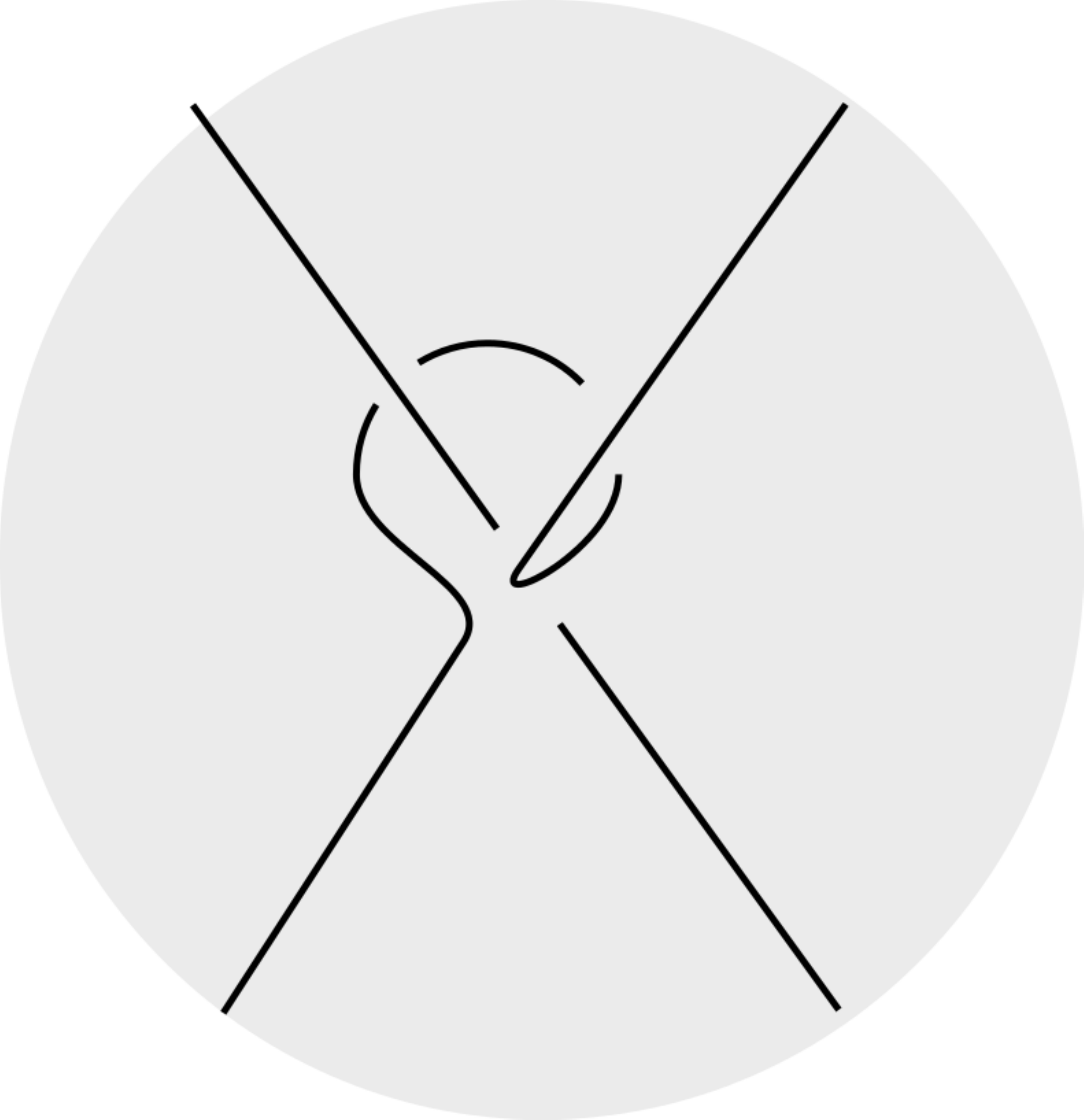}&
{\LARGE$\xrightarrow{\mathrm{1st\ Reidemeister}}$} &
  \includegraphics[width=2cm]{./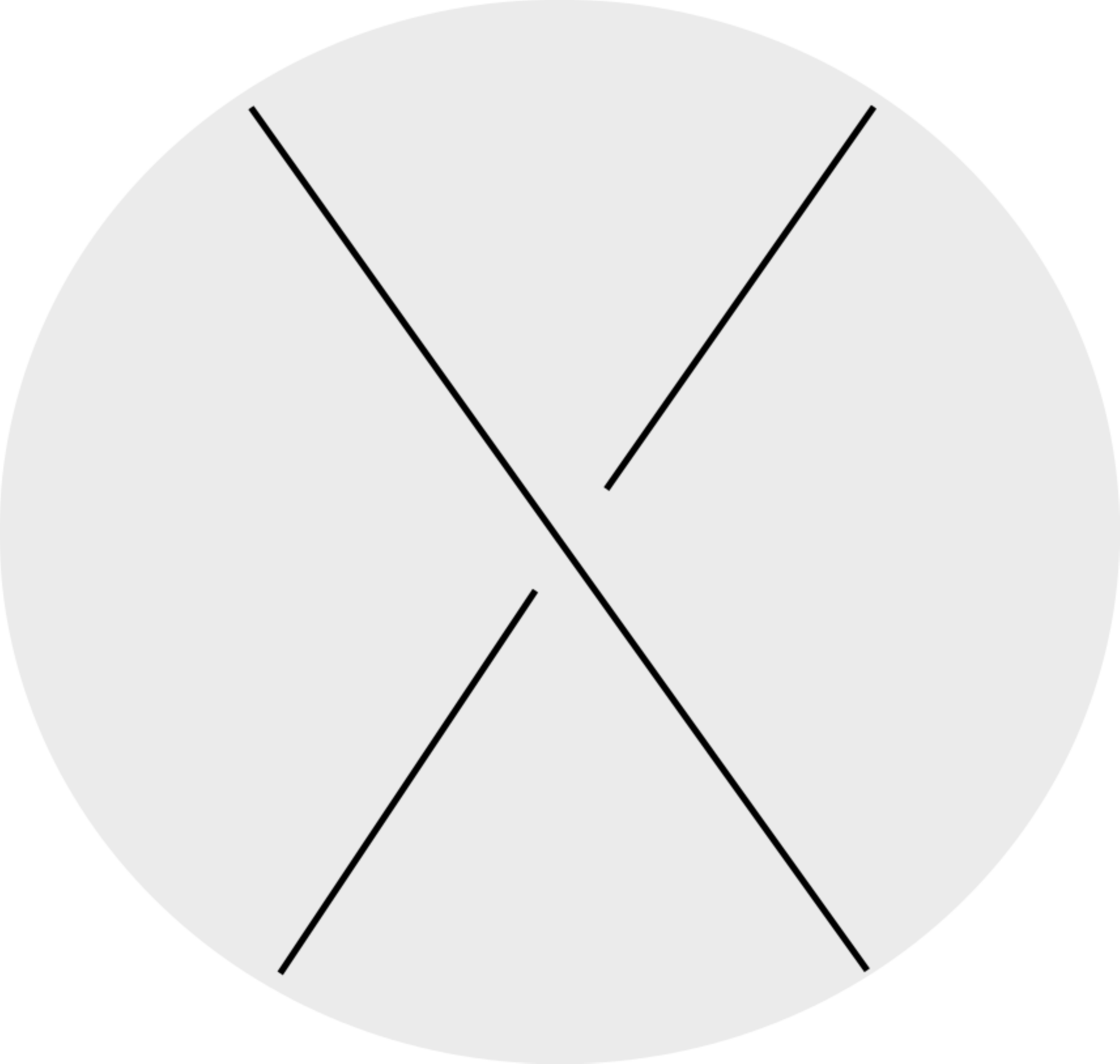}
  \end{tabular}
\caption{switching crossings by gluing an ellipse}
\label{fig:switchingcrossings}
\end{figure}

The following theorem will allow one to change a knot to one whose diagram differs only in the type of crossings.

\begin{theorem}
\label{switchingcrossings}
  If there exists a knot of degree~$d$ with diagram $D_1$, then there exists a
  knot of degree~$d+2$ with a diagram $D_2$ if $D_1$ can be obtained from  $D_2$ by
  changing exactly one under-crossing to an over-crossing or one over-crossing to
  an under-crossing.
\end{theorem}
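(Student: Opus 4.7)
The plan is a direct application of Theorem~\ref{maingluingtheorem} with the second curve taken to be a small degree-$2$ ellipse placed locally near the crossing to be switched; this adds $2$ to the degree, giving the required $d+2$. The substantive work is arranging the ellipse so that its gluing effects the crossing change up to one Reidemeister~I move, exactly as illustrated in Figure~\ref{fig:switchingcrossings}.

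Without loss of generality suppose the crossing $c$ of $D_1$ to be switched is an over-crossing, with $\alpha$ the over-strand and $\beta$ the under-strand; the other case is symmetric. Let $B \subset \mathbb{RP}^3$ be a small ball around $c$ so that $K \cap B$ consists only of two short, nearly-straight arcs of $\alpha$ and $\beta$, with $\alpha$ above $\beta$. Pick a point $p$ on $\alpha \cap B$ slightly offset from the preimage of $c$, and construct a small ellipse $E$ (a conic, hence of degree $2$ in $\mathbb{RP}^3$) lying in a plane through $p$ tilted so that (i)~$E$ meets $K$ only at $p$ and transversally, and (ii)~on the side of $p$ containing $c$, the ellipse $E$ dips below $\beta$. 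The projection of $E$ is then a small convex oval, and one can adjust its plane, size, and axes so that this oval meets $\pi(\beta)$ in exactly two points and $E$ lies below $\beta$ at both of them.

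Theorem~\ref{maingluingtheorem} applied at $p$ then produces $K'$, a real rational knot of degree $d+2$ whose image, outside a small neighborhood of $p$, lies in a tubular neighborhood of $K \cup E$. Its diagram therefore coincides with $D_1$ outside $B$. Inside $B$, the glued curve traces both the $\alpha$-arc and (most of) $E$, joined by the hyperbolic smoothing at $p$; by construction this produces the middle picture of Figure~\ref{fig:switchingcrossings}, in which the strand originally passing over $\beta$ at $c$ has been routed below $\beta$, together with a small loop that is a Reidemeister~I kink. One Reidemeister~I move converts this local picture to a single under-crossing, yielding a diagram isotopic to $D_2$.

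The main obstacle is the explicit geometric verification that $E$ can be positioned inside $B$ so that the projection of the glued curve there is exactly the "crossing-switch plus kink" picture, and not some more complicated tangle with additional, non-removable crossings. This is a purely local check in a ball where $K$ looks like two skew line segments, and the many degrees of freedom in choosing the plane, center, eccentricity, and orientation of the conic $E$ provide ample flexibility; nevertheless, carrying out the verification requires a careful case analysis depending on the side of $c$ on which $p$ is placed and the tilt of the plane of $E$.
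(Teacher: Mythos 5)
Your proposal is correct and is essentially the paper's own argument: glue a small conic at a point on one strand near the crossing, positioned so that the glued curve's diagram is the switched crossing plus a Reidemeister~I kink (the paper's proof is a single sentence deferring entirely to Figure~\ref{fig:switchingcrossings}). The local geometric verification you flag as the ``main obstacle'' is likewise left implicit in the paper, so your write-up is, if anything, more explicit about what remains to be checked.
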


\begin{proof}
 Consider an ellipse $C$ passing through the point $p_1$, small enough so that it does not contain any other double points of the projection of the knot and so that its projection intersects the projection of the knot in one other point $p_2$ (see Figure~\ref{fig:switchingcrossings}).
\end{proof}

By applying the theorem successively, it is easy to generalize it as follows:
\begin{corollary}
  If there exists two knots $k_1$ and $k_2$ whose diagrams differ by $n$
  crossing changes, and if $k_1$ can be realized as a real rational knot of
  degree~$d$, then $k_2$ can be realized as a real rational knot of degree at
  most $d+2n$.
\end{corollary}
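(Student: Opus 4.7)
The plan is to proceed by induction on $n$, using Theorem~\ref{switchingcrossings} as the base case essentially for free. The single-crossing-change theorem tells us that switching one crossing costs exactly degree $2$, so iterating this $n$ times should yield the bound $d+2n$.

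More concretely, I would first fix a sequence $D_1 = E_0, E_1, \ldots, E_n = D_2$ of knot diagrams in which each consecutive pair $E_{i-1}, E_i$ differs by exactly one crossing change; such a sequence exists because by hypothesis $D_1$ and $D_2$ differ in $n$ crossing sites, and we can switch these one at a time in any chosen order. Then I would set up the induction hypothesis: there exists a real rational knot $K_i$ of degree at most $d + 2i$ whose diagram is $E_i$. The base case $i=0$ is just the assumption on $k_1$. For the inductive step, applying Theorem~\ref{switchingcrossings} to $K_{i-1}$ (of degree at most $d + 2(i-1)$) yields a real rational knot $K_i$ of degree at most $d + 2(i-1) + 2 = d + 2i$ with diagram $E_i$. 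Taking $i = n$ produces $k_2$ of degree at most $d + 2n$.

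The only subtlety — and really the only thing to be careful about — is making sure Theorem~\ref{switchingcrossings} can actually be applied repeatedly: each application requires that the ellipse used to switch a crossing can be placed so that its projection meets the projection of the current knot in exactly the two desired points near the crossing being switched. Since the previous theorem's proof uses a small enough ellipse around the relevant crossing, and since the other crossings of the diagram are at isolated points, we can always shrink the ellipse far enough to avoid interfering with the other crossings we still need to switch. So no genuine obstacle appears here; the corollary is really just an induction wrapper around the previous theorem, and no new geometric input is needed beyond verifying that the ``small enough ellipse'' condition survives iteration.
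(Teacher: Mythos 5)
Your proof is correct and is exactly the paper's approach: the paper dispatches this corollary with the single remark that it follows ``by applying the theorem successively,'' which is precisely your induction on the number of crossing changes. Your added check that each ellipse can be kept small enough not to disturb the remaining crossings is a reasonable (and harmless) elaboration of the same idea.
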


Manturov proved that any knot can be realized as a $(p, q)-$~torus-knot along
with some crossing changes~\cite{manturov2002combinatorial}~\cite[Theorem 16.1]{manturov2004knot}. We therefore have the following corollary:
\begin{corollary}
 Given a knot $k$, consider the $(p,q)-$~torus knot that differs from it by
 crossing changes. If the number of crossing changes is $c$, then there exists a
 real algebraic knot of degree $2(q+c)$.
\end{corollary}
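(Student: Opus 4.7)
The plan is to combine Manturov's theorem with the preceding corollary in the obvious way. By Manturov's theorem (just cited), there is a $(p,q)$-torus knot $T_{p,q}$ whose diagram differs from that of $k$ by exactly $c$ crossing changes; the preceding corollary then says that if $T_{p,q}$ admits a real rational representative of degree $d$, so does $k$ in degree at most $d+2c$. The entire content of the corollary therefore reduces to the single claim that \emph{the $(p,q)$-torus knot admits a real rational representative of degree $2q$} (taking $q \le p$ without loss of generality).

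To produce $T_{p,q}$ in degree $2q$, I would extend the $(2,k)$-torus knot gluing already indicated in the paper. There, the $(2,k)$-torus knot is obtained by successively gluing $k-1$ circles (each of degree $2$) to a base circle, each gluing introducing a single twist via the construction of Figure~\ref{fig:twistuntwist} and contributing $2$ to the degree, for a total of $2k$. For general $p$, one would instead glue $q-1$ conics to a base conic, arranging each glued conic so that its projection crosses the current $p$-strand configuration along the full pattern $\sigma_1\sigma_2\cdots\sigma_{p-1}$ of the standard braid word for $T_{p,q}$ — so that a single gluing step, still of degree $2$, realises one full iteration of the torus braid. The tangency and point-preservation assertions in Theorem~\ref{maingluingtheorem}(3) would supply the geometric control required to place each successive conic inductively with the correct crossing pattern.

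The main obstacle is precisely this geometric step: demonstrating that, at each stage of the induction, one can choose a conic whose projection meets the current multi-strand configuration in the required $p-1$ points with prescribed over/under data, and in such a way that gluing produces those $p-1$ crossings without any extraneous double points that would then have to be removed by further crossing changes (costing additional degree). This requires a careful inductive use of the tangent-direction control of Theorem~\ref{maingluingtheorem}(3), together with elementary projective geometry to show that a conic can be chosen to pass through the required configuration of points on the previous gluing. Once $T_{p,q}$ is realised at degree $2q$, a single application of the preceding corollary contributes $2c$ from the $c$ crossing changes and yields $k$ at degree at most $2q+2c=2(q+c)$, which is the claimed bound.
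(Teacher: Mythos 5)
Your reduction is the same as the paper's: invoke Manturov's theorem to get a $(p,q)$-torus knot differing from $k$ by $c$ crossing changes, apply the preceding crossing-change corollary at cost $2c$ in degree, and thereby reduce everything to producing a real rational $(p,q)$-torus knot of degree $2q$. Where you diverge is in that last step, and that is where the gap lies. The paper does not build the torus knot by gluing at all: it observes that the $(p,q)$-torus knot is the link of the singularity of $z^p+w^q=0$ at the origin (Milnor), and that this curve is rational --- it is parametrized by $t\mapsto(t^q,\,\epsilon t^p)$, which restricted to a circle in the $t$-plane gives a real rational parametrization of degree $2q$ (with $q$ the larger exponent). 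The degree-$2q$ representative is therefore obtained in one line from an explicit algebraic parametrization, with no inductive geometric construction needed.

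Your alternative --- gluing $q$ conics so that each one realizes a full iteration $\sigma_1\sigma_2\cdots\sigma_{p-1}$ of the torus braid word --- is not carried out, and you yourself flag the missing step: showing that at each stage a conic can be chosen whose projection weaves through the existing $p$-strand configuration with the prescribed over/under pattern and \emph{no} extraneous double points. This is not a routine verification. A conic lies in a plane, its projection is again a conic, and a conic meets a generic line in an even number of real points, so arranging for it to cross each of $p-1$ strands exactly once each with controlled crossing data (while also meeting the gluing-point and tangency constraints of Theorem~\ref{maingluingtheorem}) is precisely the kind of geometric rigidity problem the introduction warns about; any extraneous crossings would have to be repaired at a cost of $2$ in degree each, destroying the $2q$ bound. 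For $p=2$ the paper's twist construction handles this, but for general $p$ no such construction is given or obviously available. As written, the corollary's conclusion is therefore not established by your argument; the missing ingredient is exactly the one the paper supplies via the singularity link.
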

\begin{proof}
 A $(p,q)-$~torus-knot is the link of the singularity associated to a the curve
 $z^p+w^q=0$ at $(0,0)$ in $\mathbb{C}^2$ (see ~\cite[Assertion, Section 1]{milnor2016singular}). Indeed, it is easy to see that it
 is rational and of degree $2q$. Now we apply Manturov's theorem.
\end{proof}

However, in general it may be difficult to find the $(p, q)$-torus knot and the
$p$ and $q$ may be very large. Furthermore, the crossing changes required may be
very large too. In sections \ref{sec:braid} and \ref{sec:polygonal} we will see alternative methods.

\section{Pretzel knots}
 Now we demonstrate a general method of constructing an entire class of knots,
 namely the pretzel knots, with a bound on the degree in terms of the tuple of
 numbers that distinguish the pretzel knots.

\begin{figure}[h]
  \begin{center}
 \includegraphics[width=5cm]{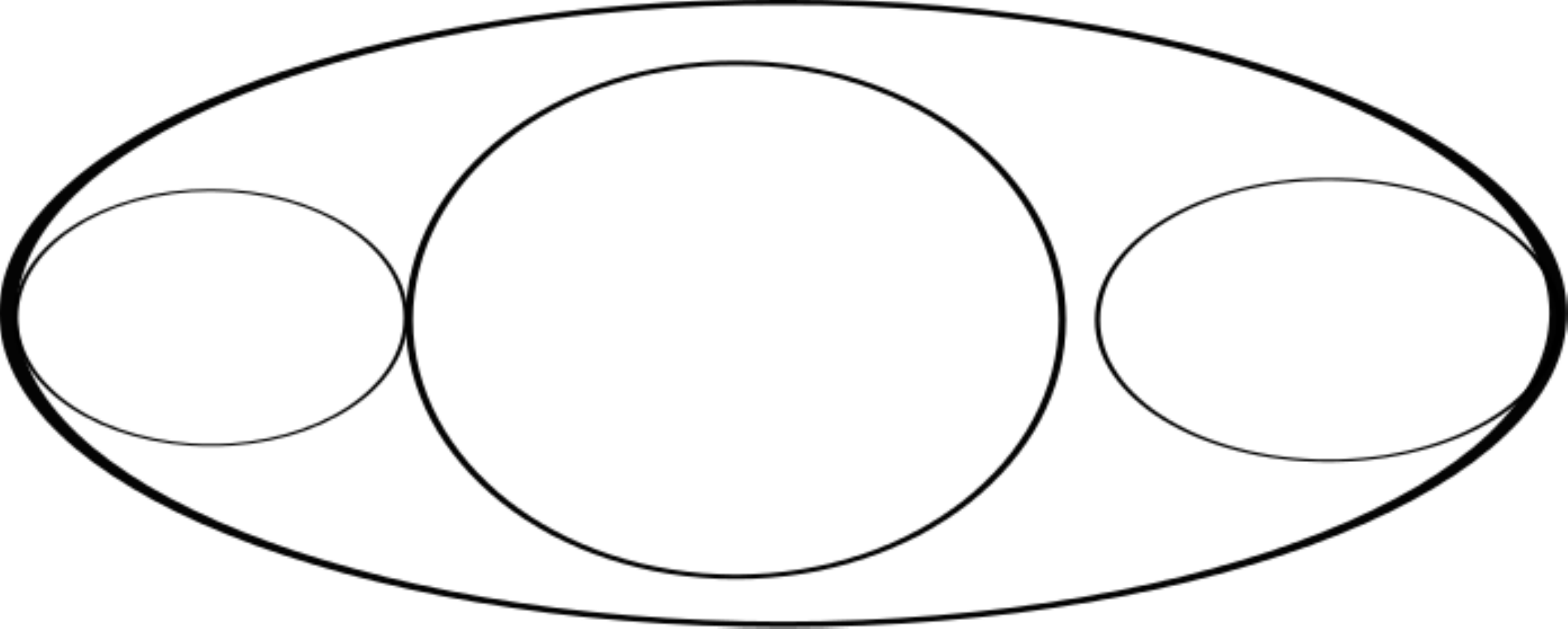} 
 \caption{Skeleton of a pretzel knot}
  \end{center}
 \label{fig:pretzel1}
\end{figure}

\begin{definition}
 The skeleton of a  $(k_1, \ldots, k_n)$-pretzel knot  is the $(\epsilon_1, \ldots, \epsilon_n)$-pretzel knot where $\epsilon_i = k_i\ 
 \mathrm{mod}\ 2$
\end{definition}

\begin{lemma}
 A $(k_1, \ldots, k_n)$-pretzel knot can be constructed by gluing $\sum_i
 [k_i/2]$circles to its skeleton.
\end{lemma}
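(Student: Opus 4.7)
The plan is to build the $(k_1,\ldots,k_n)$-pretzel knot out of its skeleton tangle by tangle. In the $i$-th tangle, the skeleton carries $\epsilon_i \in \{0,1\}$ half-twists while the target carries $k_i$; since $k_i \equiv \epsilon_i \pmod{2}$, the deficit $k_i - \epsilon_i$ is even and equals $2\lfloor k_i/2 \rfloor$ (assuming for concreteness that each $k_i \geq 0$; the sign of each twist is controlled by the orientation of the glued circle as in the preceding double-twist/twist theorems). So each tangle needs exactly $\lfloor k_i/2\rfloor$ additional double-twists, and by Theorem~\ref{switchingcrossings}'s neighbor Theorem 2.3 one ellipse per double-twist suffices. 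Summing over $i$ yields the claimed total of $\sum_i \lfloor k_i/2\rfloor$ circles.

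To realize these gluings concretely, for each $i$ I select a ball $B_i$ near the $i$-th tangle whose intersection with the current curve consists of two non-intersecting arcs, as required by the double-twist theorem. If $\epsilon_i = 0$ the tangle ball itself works; if $\epsilon_i = 1$ the tangle ball contains one crossing, and I instead take $B_i$ to be a slightly smaller sub-ball positioned just above (or below) that crossing, where the two strands run as essentially parallel arcs. Gluing one suitably oriented ellipse inside $B_i$ introduces one double-twist and produces a real rational curve of degree increased by $2$. Iterating the construction inside nested or adjacent sub-balls of $B_i$ adds further double-twists, one per ellipse, until the $i$-th tangle has the prescribed $k_i$ half-twists.

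The main technical point is bookkeeping: each new ellipse must be glued in a region disjoint from previously glued material while preserving the ``two non-intersecting arcs'' hypothesis for the next step. Because gluing is a local operation (by Theorem~\ref{maingluingtheorem}, away from a small neighborhood of the intersection point the glued curve is merely a section of a tubular neighborhood of the union of the factors), this is easily arranged by shrinking $B_i$ after each gluing, and distinct tangles do not interact at all. After performing $\lfloor k_i/2\rfloor$ gluings in each tangle, the resulting curve is a real rational representative of the $(k_1,\ldots,k_n)$-pretzel knot, constructed from the skeleton by gluing exactly $\sum_i \lfloor k_i/2\rfloor$ circles, as required.
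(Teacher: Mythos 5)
Your proof is correct and follows essentially the same route as the paper's: the paper's (much terser) proof likewise invokes the double-twist-by-gluing-an-ellipse construction of Figure~\ref{fig:doubletwist} and counts one circle per double twist, for a total of $\sum_i [k_i/2]$. Your explicit bookkeeping of the deficit $k_i-\epsilon_i=2\lfloor k_i/2\rfloor$ and of where to place the balls is a welcome elaboration of what the paper leaves implicit, but it is not a different argument.
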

\begin{proof}
  Figure~\ref{fig:doubletwist}  shows that a double-twist can be added in any
  neighbourhood of the knot where it is isotopic to two parallel lines, by
  gluing a circle. Therefore, by
  gluing $[k_1/2]$ circles one can obtain the necessary twists.
\end{proof}

\begin{figure}[h]
  \begin{center}
 \includegraphics[width=3cm]{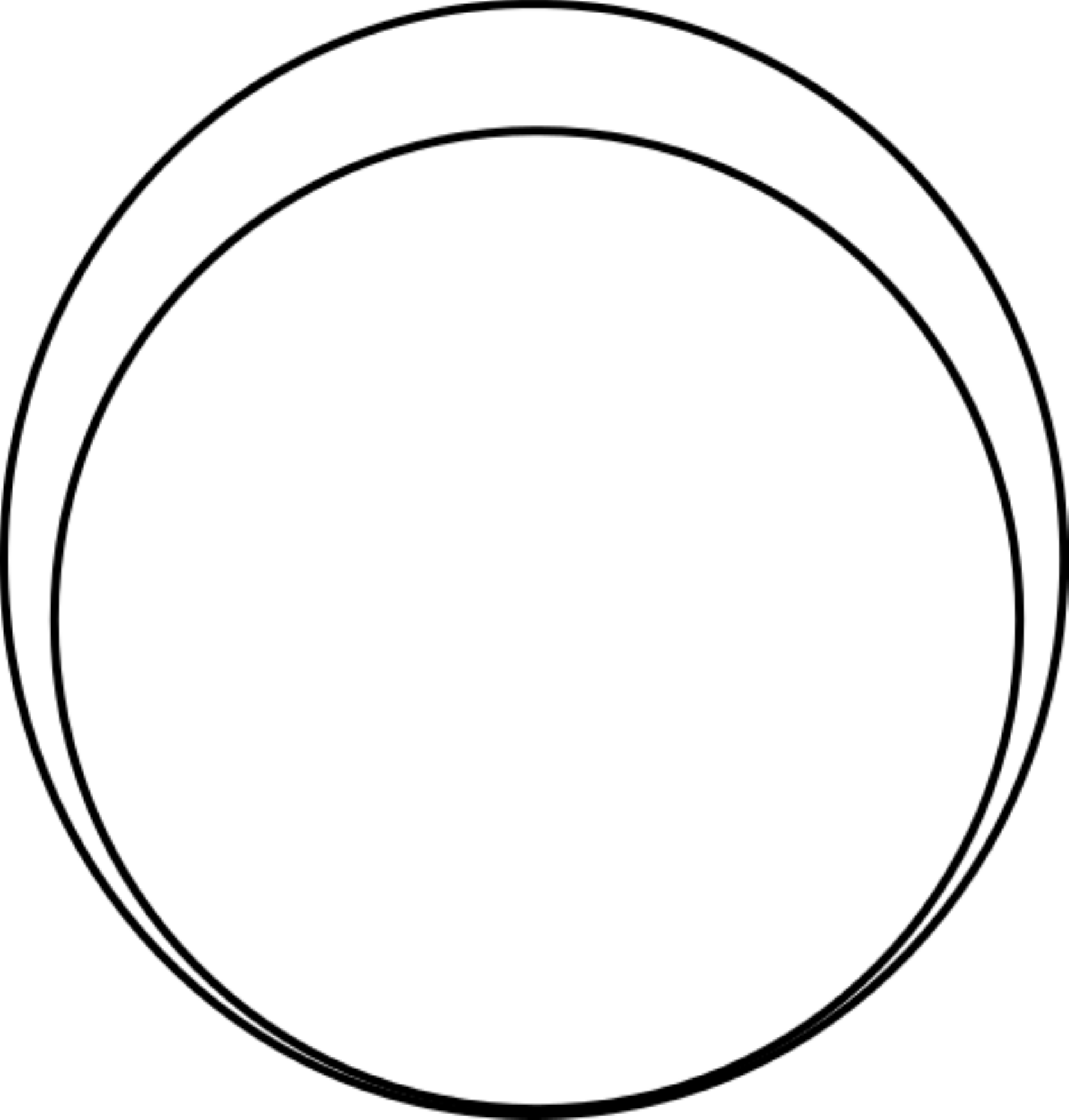} 
 \label{fig:pretorusknot}
 \caption{Skeleton of a torus knot}
  \end{center}
\end{figure}
\begin{lemma}
 A skeleton of a $(k_1,\ldots, k_n)-$pretzel knot can be obtained by gluing $n+1$ ellipses. 
\end{lemma}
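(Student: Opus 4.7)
The plan is to give an explicit configuration of $n+1$ ellipses in $\mathbb{RP}^3$ whose iterated gluing realizes the pretzel skeleton, using Figure~\ref{fig:pretorusknot} as a model. Label the ellipses $E_0,E_1,\ldots,E_n$ and arrange them in a linear chain so that consecutive ellipses $E_{i-1}$ and $E_i$ meet transversally at a single point $q_i$ for $i=1,\ldots,n$, with no other intersections. Successively applying Theorem~\ref{maingluingtheorem} at each $q_i$ yields a single rational curve of degree $2(n+1)$; part~(1) of that theorem guarantees that, away from small neighbourhoods of the $q_i$, the glued curve is a section of a tubular neighbourhood of $\bigcup_i E_i$, so the global topology of the result is determined by $\bigcup_i E_i$ together with the $n$ local resolutions.

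Next I would place the $E_i$ alternately on opposite sides of a plane containing the chain of intersection points, so that $\bigcup_i E_i$ already has the shape of a pretzel skeleton: the arcs of $E_{i-1}$ and $E_i$ away from $q_i$ form the horizontal ``bars'' joining adjacent tangle regions, while the two arcs meeting at each $q_i$ supply the $i$-th tangle. At every $q_i$ there are exactly two smooth resolutions of the transverse crossing; selecting one (by orienting the ellipses appropriately, in the same spirit as the twist/untwist construction used earlier in this section) produces either two parallel strands (the case $\epsilon_i=0$) or a single crossing (the case $\epsilon_i=1$). Since the choices at different $q_i$ are independent, the full parity sequence $(\epsilon_1,\ldots,\epsilon_n)$ can be realized.

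The main obstacle is to verify that this alternating chain actually assembles into the standard pretzel diagram rather than some other chain of tangles with the same parities. I would handle this by checking the small cases $n=1,2,3$ by direct drawing and then observing that the general configuration is obtained by extending the same pattern: each $E_i$ contributes one arc to the top bar and one to the bottom bar of the pretzel diagram, while the ends $E_0$ and $E_n$ close up the diagram. Part~(3) of Theorem~\ref{maingluingtheorem} provides the geometric flexibility needed to ensure that the tangent data at each $q_i$ can be matched so that the chosen orientations give the desired local resolutions, completing the construction.
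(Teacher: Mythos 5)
There is a genuine gap at the heart of your construction: the claim that at each intersection point $q_i$ ``selecting one of the two smooth resolutions produces either two parallel strands (the case $\epsilon_i=0$) or a single crossing (the case $\epsilon_i=1$).'' The two resolutions of a transverse intersection in space both yield, in the projection near $q_i$, a crossingless smoothing of the double point; the choice of resolution (equivalently, of orientation) only determines \emph{which} pair of arc-ends gets connected, never whether a crossing appears. In the paper's twist construction the single crossing of a twist comes from a \emph{second}, projection-only double point $q'$ obtained by making the projections tangent and perturbing; it is $q'$, not the resolution at $q$, that survives as the crossing. Your chain of ellipses does not arrange for such extra projection crossings, so the $\epsilon_i=1$ tangles are not produced by the mechanism you describe. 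Conversely, the $\epsilon_i=0$ case is also not free: the projections of two conics meet in an even number of points, so if $q_i$ is one of them there is at least one further projection crossing between $E_{i-1}$ and $E_i$ that your argument neither removes nor accounts for. Finally, the global assembly into the standard pretzel diagram is asserted (``check $n=1,2,3$ and extend the pattern'') rather than proved.

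For comparison, the paper does not attempt a uniform construction. It splits into two cases: when some $k_i$ is even the skeleton is built directly from an explicit figure with $n+1$ ellipses; when all $k_i$ are odd the skeleton $(1,\ldots,1)$ is recognized as a $(2,n)$-torus knot and is built from a different starting configuration by adding double twists (each double twist being one glued ellipse, as in the double-twist theorem). The fact that the paper is forced into this case analysis is a sign that a single linear-chain pattern with independently chosen local resolutions, as you propose, does not straightforwardly realize every parity vector. If you want to salvage your approach, you would need to specify, for each $i$, the precise mutual position of the projections of $E_{i-1}$ and $E_i$ (transverse with one real and one spurious crossing for $\epsilon_i=1$, versus an arrangement in which the leftover projection crossings cancel for $\epsilon_i=0$) and then verify the resulting diagram is the pretzel diagram; at present these are exactly the missing steps.
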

\begin{proof}
 Figure~\ref{fig:pretzel1} shows the construction of the skeleton if at least
 one of the $k_i$'s is even. 

Assume that all the $k_i$'s are odd. Then the skeleton is a
$(1,\ldots,1)$-pretzel knot which can easily be seen to be a $(2,k)$-torus knot.
Therefore, if we can construct the $(2,k)$-torus knot, we have only to add
double twists to a strip. However, the $(2,k)$-torus knot itself can be
constructed by performing the gluing perturbation to figure~\ref{fig:pretorusknot}
and then adding the necessary double twists.
\end{proof}

\begin{corollary}
 Given a $(k_1, \ldots, k_n)$-pretzel knot, there exists a real rational knot of
 degree less than or equal to $1+n+\sum_{i=1}^n k_i$
\end{corollary}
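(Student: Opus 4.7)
The plan is to chain together the two preceding lemmas using the degree additivity built into Theorem~\ref{maingluingtheorem}. By the second lemma, the skeleton of the $(k_1,\ldots,k_n)$-pretzel knot can be built by gluing $n+1$ ellipses; by the first lemma, the full pretzel is then obtained by gluing $\sum_i [k_i/2]$ further circles to install the remaining twists. Both ellipses and circles are real rational curves of degree $2$, and Theorem~\ref{maingluingtheorem} tells us that gluing two real rational curves of degrees $d_1$ and $d_2$ produces a real rational curve of degree $d_1+d_2$. Stacking these contributions, the pretzel is realised by a real rational curve whose degree is the sum of the degrees of all the pieces glued in.

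Evaluating this sum gives an upper bound of $2(n+1) + 2\sum_i [k_i/2]$, and using $2[k_i/2] \leq k_i$ this is at most $2(n+1) + \sum_i k_i$. To sharpen the prefactor down to $1+n$ and match the stated bound, I would revisit the proof of the skeleton lemma in its two cases. In the all-odd case the skeleton is the $(2,n)$-torus knot, which by the corollary after Theorem~\ref{switchingcrossings} is already realised as a real rational knot of degree $2n$ via Milnor's parametrization of $z^2 + w^n = 0$, saving compared with the naive $2(n+1)$; in the mixed-parity case, the configuration used to build the skeleton can be arranged so that at least one of its $n+1$ components is a line (degree $1$) rather than a conic (degree $2$), giving an analogous saving. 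Combining the sharper skeleton estimate with the $\sum_i[k_i/2]$ twist circles and invoking $2[k_i/2]\leq k_i-\delta_i$ (with $\delta_i=1$ precisely when $k_i$ is odd) then collapses the total into $1+n+\sum_i k_i$.

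The conceptual content is entirely contained in the two preceding lemmas together with additivity of degrees under gluing, so the main obstacle I foresee is the case-by-case bookkeeping in the last paragraph: one must check, for every parity pattern of the $k_i$, that the skeleton can be assembled of total degree at most $n+1$ and that the $[k_i/2]$ twist circles can genuinely be glued at the required locations on this geometrically rigid skeleton. Once this case analysis is in place, the corollary follows by summing the degrees of all the glued pieces and applying Theorem~\ref{maingluingtheorem}.
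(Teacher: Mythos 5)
Your first paragraph is, in substance, the paper's entire proof: the corollary is stated with no argument at all, the intended justification being exactly what you wrote --- chain the two preceding lemmas and use additivity of degree under gluing, with each ellipse or circle contributing $2$. So your approach is the same as the paper's.

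The arithmetic discrepancy you noticed, however, is genuine, and the second half of your proposal does not close it. The two lemmas as stated yield a curve of degree $2(n+1)+2\sum_i [k_i/2]$, which is at most $2n+2+\sum_i k_i$ in general and equals $n+2+\sum_i k_i$ when all the $k_i$ are odd; since $\sum_i (k_i \bmod 2)\le n$, this count can never get down to $1+n+\sum_i k_i$, so the stated constant simply does not follow from the lemmas as written. Your proposed repairs are speculative and should not be presented as part of the proof: gluing the $[k_i/2]$ twist circles requires each tangle region to appear as two parallel arcs in a ball, which the Milnor parametrization of the $(2,n)$-torus knot on the standard torus does not obviously supply, and the skeleton lemma explicitly uses $n+1$ ellipses, not $n$ ellipses and a line, so the claimed ``saving'' of a conic is not available from the text. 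The honest conclusion is that your first paragraph proves the bound $2(n+1)+2\sum_i[k_i/2]\le 2n+2+\sum_i k_i$, and that the constant $1+n$ in the corollary requires either a correction or a sharper construction of the skeleton than the paper provides.
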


In the following sections we will demonstrate general methods of constructing
a real rational representative of any knot, in terms of some numerical invariant
of the knot.
\section{The braid group approach}
\label{sec:braid}
A look at the construction of knots in ~\cite{bjorklund} with small number of
crossings shows that, in general, the diagrams are not necessarily the standard
simple ones. Certain diagrams are more amenable to construction by gluing
ellipses. Even in the example in the previous section, the standard diagram of the skeleton of a $(k_1, \ldots, k_n)$-pretzel knot, where all the $k_i$ are odd, had to be changed to the standard diagram of a $(2,k)$-torus knot. Our goal is to find a general procedure of constructing all possible knots, and therefore a general method of finding a suitable diagram. Furthermore we wish to be able to obtain a reasonable bound on the degree.

It is well known that any knot can be represented as the closure of a braid.
Here, we will see that the group structure of the braid group (more
specifically, the isomorphism $B_n/P_n \cong S_n$) helps us in
finding a representation of any given knot that can be constructed by repeatedly
applying two very simple gluing moves that are shown in
figure~\ref{fig:braidmove}. The bound will be in terms of the minimum number of
certain types of generators required to express the braid associated to the
knot. In order to fix some notation, we recall the following simple
theorem: 

\begin{theorem}
  \label{purebraidisomorphism}
There is a natural group homomorphism, $\theta : B_n \to S_n$, from
the Braid group $B_n$ with $n$ strands to the permutation group $S_n$ whose kernel is the group of pure braids $P_n$.
\end{theorem}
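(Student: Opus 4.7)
The plan is to construct $\theta$ geometrically from the endpoint data of a braid and read off the three required properties (well-definedness, homomorphism, identification of the kernel) directly from the definitions. Represent an element $\beta \in B_n$ by $n$ disjoint monotone strands in $\mathbb{R}^2 \times [0,1]$ starting at the base points $(1,0,0), \ldots, (n,0,0)$ and ending at $(1,0,1),\ldots,(n,0,1)$. Since the strands are disjoint and each is monotone in the vertical coordinate, following the strand that begins at $(i,0,0)$ determines a unique endpoint $(\sigma(i),0,1)$, and the assignment $i \mapsto \sigma(i)$ is a bijection. Set $\theta(\beta) = \sigma$.

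First I would check that $\theta$ is well-defined on isotopy classes: any ambient isotopy of $[0,1] \times \mathbb{R}^2$ rel boundary fixes the base points, so the pairing of top endpoints to bottom endpoints is preserved. Next, $\theta$ is a homomorphism: the product $\beta_1 \beta_2$ in $B_n$ is obtained by stacking representatives and rescaling, so a strand of the product starting at $(i,0,0)$ first reaches the level-$\tfrac12$ point $(\theta(\beta_1)(i),0,\tfrac12)$ and then continues via $\beta_2$ to $(\theta(\beta_2)(\theta(\beta_1)(i)),0,1)$; hence $\theta(\beta_1 \beta_2) = \theta(\beta_2) \circ \theta(\beta_1)$ (or $\theta(\beta_1) \circ \theta(\beta_2)$, depending on the composition convention chosen once and for all).

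To identify the kernel, recall that $P_n$ is \emph{defined} as the set of braids whose strands return each base point to itself, i.e.\ those with induced permutation the identity. Thus $\ker \theta = P_n$ tautologically once $\theta$ has been constructed. Finally, surjectivity (and hence the fact that $\theta$ is a genuine group extension, not just a map into $S_n$) follows by evaluating $\theta$ on the standard generators: the elementary half-twist $\sigma_i$ exchanges the adjacent strands $i$ and $i+1$ and fixes the others, so $\theta(\sigma_i) = (i\ i{+}1)$, and the adjacent transpositions generate $S_n$.

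The main ``obstacle'' is really only notational rather than mathematical: one must fix a convention for how braid multiplication is ordered (top-to-bottom versus bottom-to-top) so that $\theta$ is a homomorphism rather than an anti-homomorphism, and one must state the base points precisely enough that the definition of $P_n$ as $\ker\theta$ is visibly equivalent to the geometric definition used earlier. Once these conventions are pinned down, every assertion in the theorem is immediate from the construction.
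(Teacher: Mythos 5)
Your proof is correct: it is the standard geometric argument (induced permutation of endpoints, well-definedness under isotopy rel boundary, homomorphism via stacking, kernel equal to $P_n$ by definition, surjectivity via $\theta(\sigma_i)=(i\ i{+}1)$). The paper itself gives no proof of this statement --- it explicitly recalls the theorem only ``to fix some notation'' and treats it as well known --- so there is nothing to compare against; your write-up fills that gap correctly, and your closing remark about fixing the stacking convention so that $\theta$ is a homomorphism rather than an anti-homomorphism is the only genuinely delicate point and you handle it appropriately.
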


Placing $n$ vertical ellipses stacked up in parallel can be interpreted as the
closure of the identity braid in $B_n$. We denote this trivial braid
by $I_n$. We can construct non-identity braids of $n$ strands by the following two two possible types of gluing, which will prove sufficient for our purpose:

\begin{figure}[h]
  \begin{center}
  \begin{tabular}{ccc}
   \includegraphics[height=4cm]{./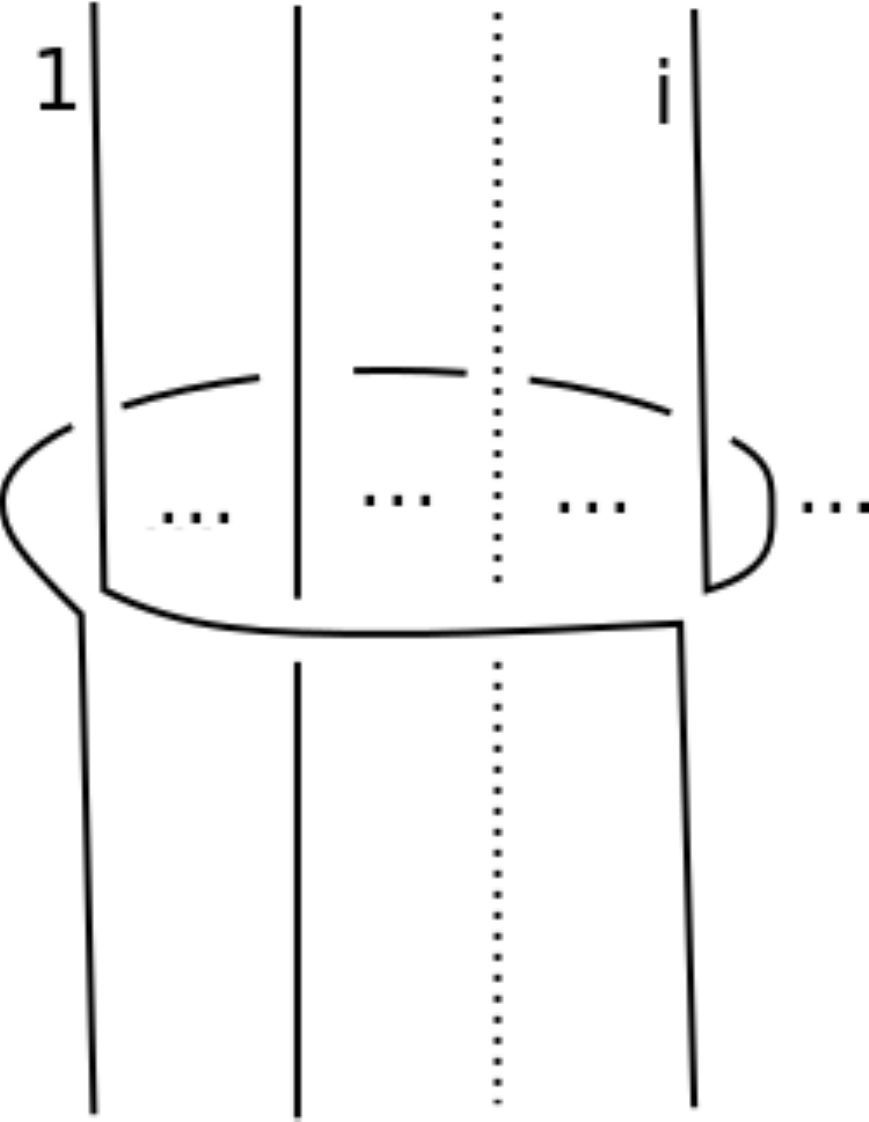}& 
 \begin{picture}(60,0)
  \begin{tikzpicture}[scale=0.7]
   \braid[line width=1, style strands={3}{dashed}] a_1 a_2 a_3 a_2 a_1; 
  \end{tikzpicture}
\put(4,105){\ldots}
\put(-15,105){\ldots}
\put(-35,105){\ldots}
\put(-58,105){\ldots}
 \end{picture} 
  \end{tabular}
  \end{center}
\caption{
  A rectangular neighbourhood of the horizontal ellipse corresponding to the
  word
  $W_i=\sigma_1\sigma_2\ldots\sigma_{i-2}\sigma_{i-1}\sigma_{i-2}\ldots\sigma_2\sigma_1$
  (the braid form is shown on the right). It connects the 1st ellipse with the
  $i$th ellipse via gluing (1st and $i$th vertical ellipse form one component).
  The image under $\theta$ is a transposition. Note that the vertical lines are
  parts of large vertical parallel ellipses. The dashed lines represent ellipses
  that are yet to be constructed and connected with the first ellipse. The
  (second) non-dashed line represents an ellipse that has been connected with
  the first one by a horizontal ellipse, above the given one, not visible in this picture.}
\end{figure}

\begin{figure}[h]
  \begin{center}
  \begin{tabular}{ccc}
   \includegraphics[height=4.5cm]{./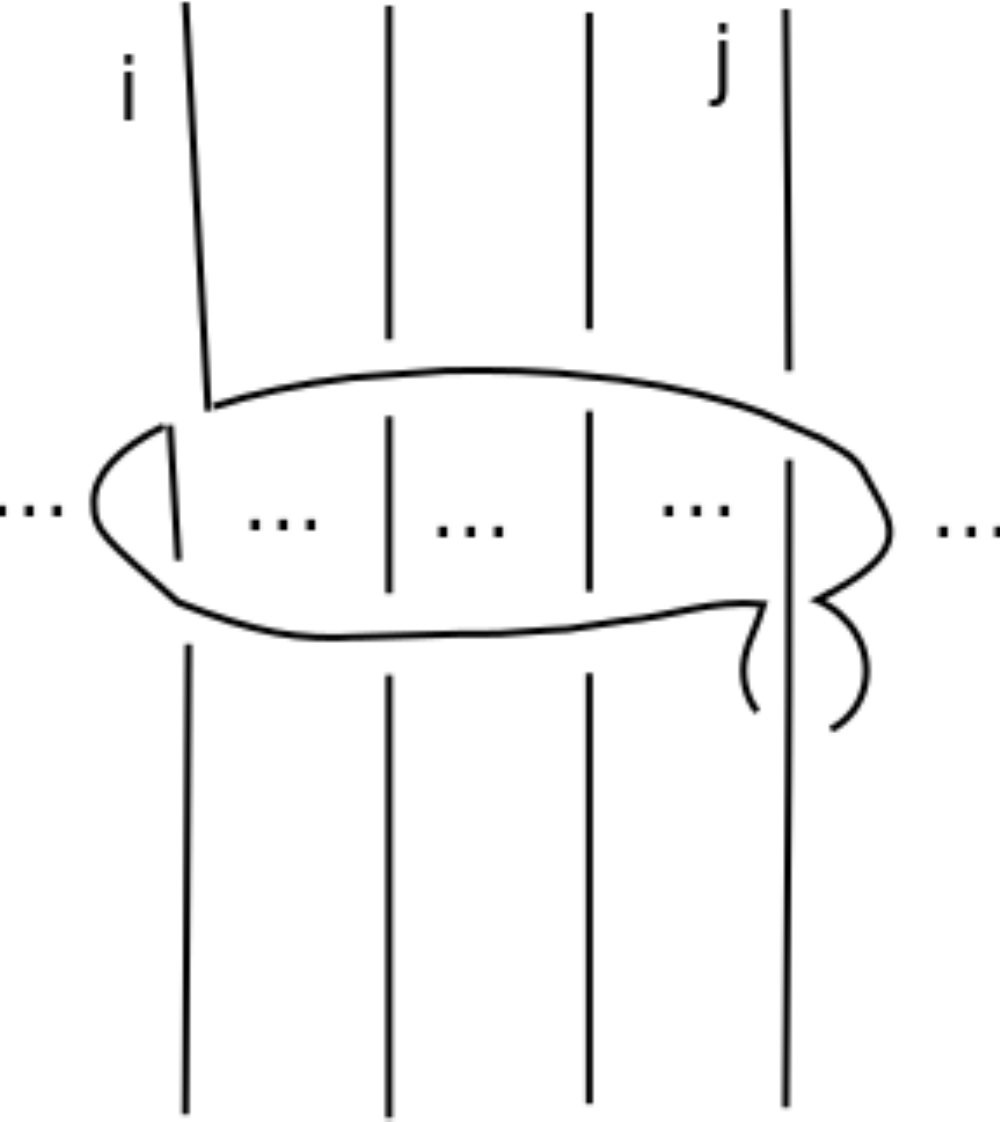}&
 \begin{picture}(60,-50)
  \begin{tikzpicture}[scale=0.7]
   \braid[line width=1] a_1 a_2 a_3 a_3 a_2^{-1} a_1^{-1}; 
  \end{tikzpicture}
\put(4,125){\ldots}
\put(-15,125){\ldots}
\put(-35,125){\ldots}
\put(-58,125){\ldots}
\put(-78,125){\ldots}
 \end{picture} 
 \\
 
  \end{tabular}
  \end{center}
\caption{A rectangular neighbourhood of the horizontal ellipse corresponding to
  the word $B_{i.j}=\sigma_i\sigma_{i+1}\ldots\sigma_{j-1}^2\ldots \sigma_i$,
  which is a generator of the pure braid group. The $n$ vertical lines are parts
  of large parallel ellipses and have already been joined together by horizontal
  ellipses corresponding to the words $W_i$ (gluing moves of type 1).} 
  \label{fig:braidmove}
\end{figure}

\begin{figure}[h]
\begin{center}
 \begin{tabular}{cc} 
  \includegraphics[width=4cm]{./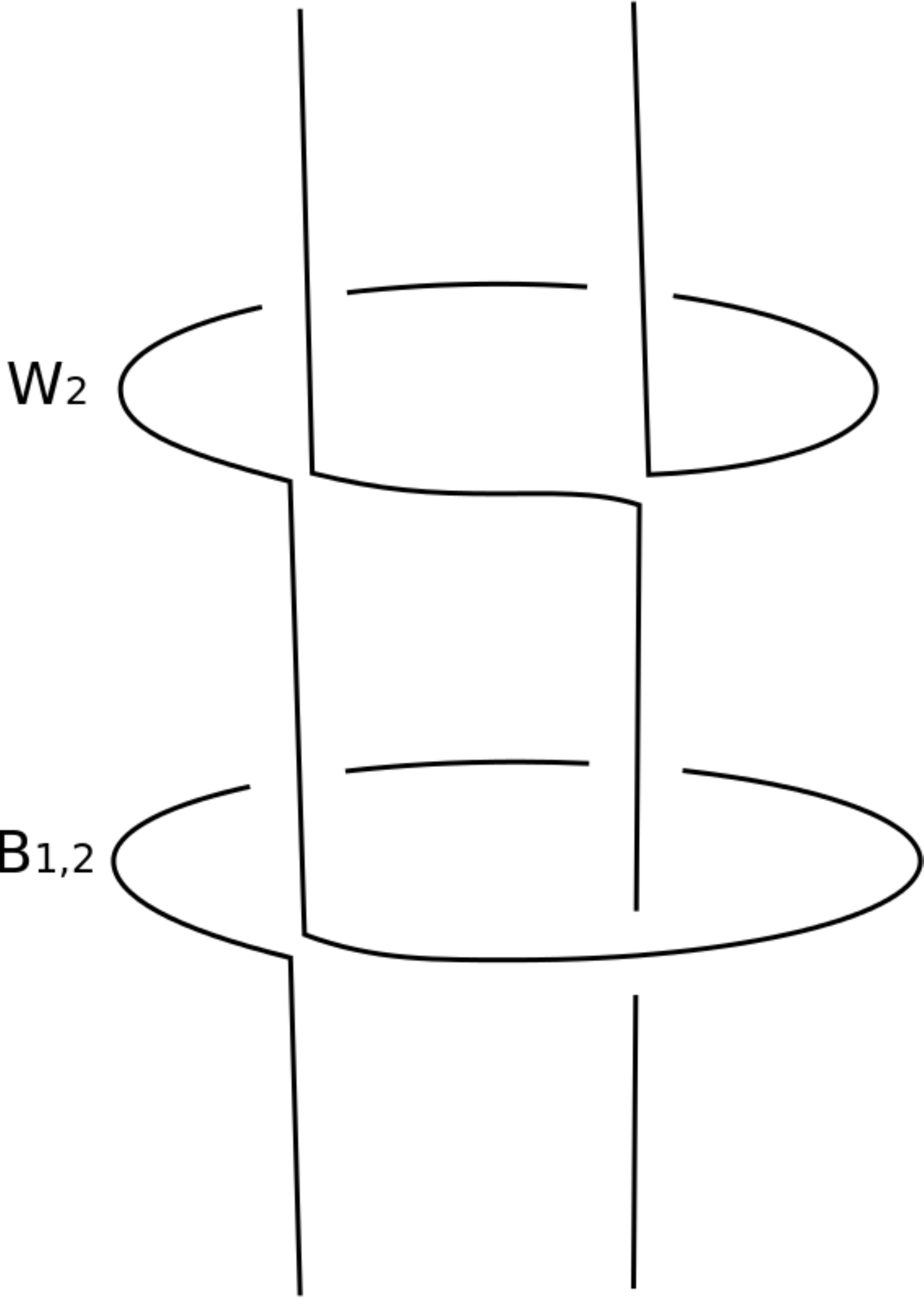} &
  \includegraphics[width=5cm]{./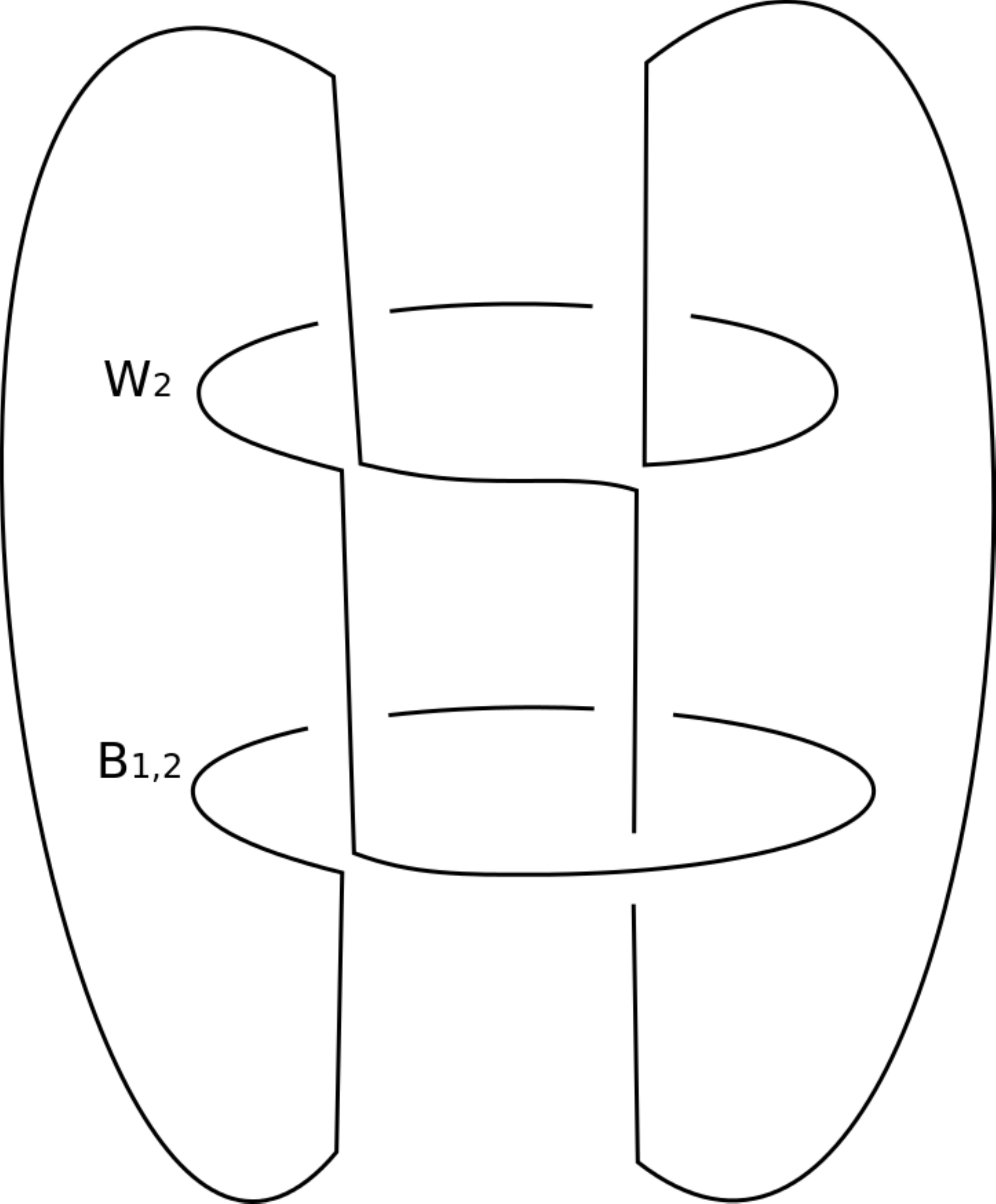} \\
The closure of the trefoil & With the closure strands visible (and distorted)
 \end{tabular}
\end{center}  
\caption{Constructing the trefoil out of the word representation $W_2B_{1,2}$.
  The vertical lines in the first picture are parts of vertical ellipses. In the
second picture, the rest of the ellipse is visible and distorted to fit it in
the picture}
  \label{fig:braidtrefoil}
\end{figure}

\begin{figure}[h]
\begin{center}
 \begin{tabular}{cc} 
  \includegraphics[width=3cm]{./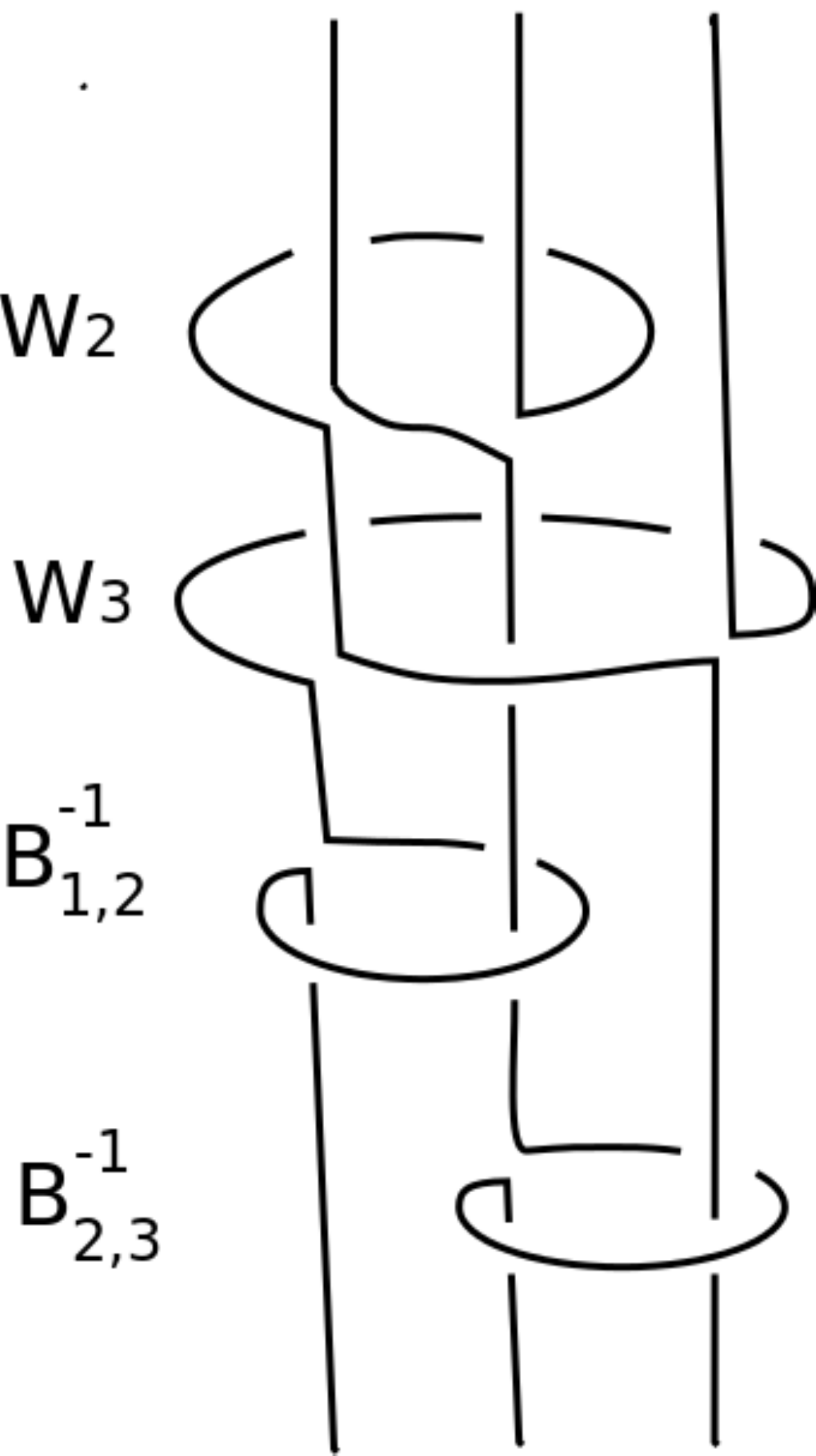} &
  \includegraphics[width=5cm]{./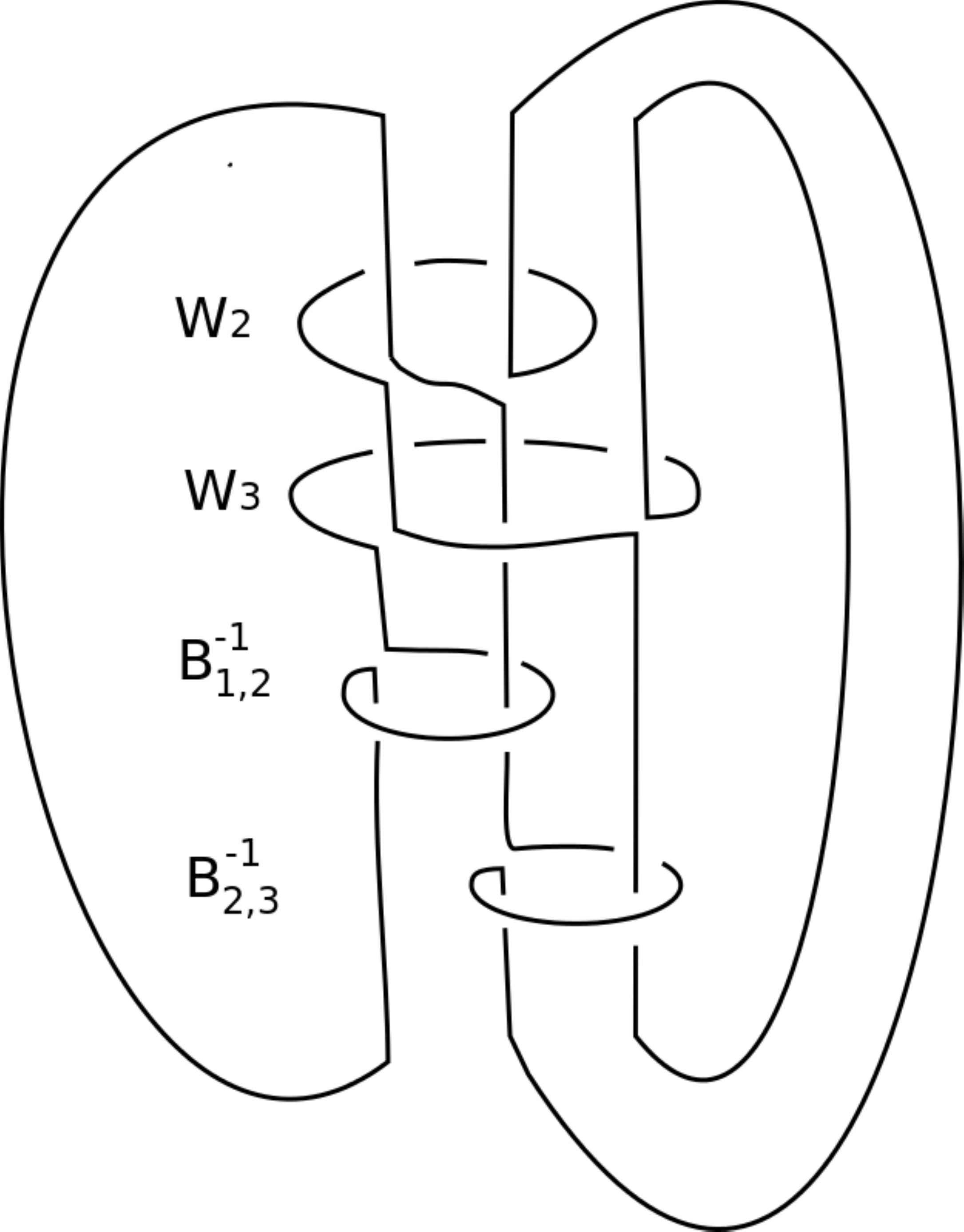} \\
The closure of the figure eight & With the closure strands visible (and distorted)
 \end{tabular}
\end{center}  
\caption{constructing the figure eight out of the word representation $W_2W_3B_{1,2}^{-1}B_{2,3}^{-1}$}
  \label{fig:braidfigureeight}
\end{figure}
\begin{description}
 \item [Gluing move of type 1] As shown in the first image of
   figure~\ref{fig:braidmove}, we may introduce a new strand by gluing a
   horizontal ellipse to the first vertical ellipses and then \textit{again} to
   the new vertical ellipse. We call this ellipse the \textit{horizontal ellipse
   corresponding to the word $W_i$}. Note that we cannot glue more than $n-1$ horizontal
   ellipses in this way, because it would result in single component. Note that
   in figure~\ref{fig:braidmove}, the vertical lines are parts of ellipses. The lines in bold
   are the ones that have already been glued by this method. The dashed lines
   are parts of the vertical ellipses that are yet to be glued. 
   \item [Gluing move of type 2] As shown in the second image of
     figure~\ref{fig:braidmove}, we may glue a horizontal ellipse at only one
     point to any vertical ellipse, say the $i$th. For reasons that will be
     clear later, we switch the last crossing to surround the $j$th ellipse. We
     call this ellipse the \textit{horizontal ellipse corresponding to the word
       $B_{i,j}$}. This does not decrease the number of connected components, and therefore any number of horizontal ellipses may be glued in this way.
\end{description}

It is easy to see that if one isotopes the strands so that they are decreasing,
as is required in the geometric definition of a  braid, the word introduced by
the first gluing move is of the type in the following definition:
\begin{definition}
 We define the $i$th transposed word, denoted by $W_i$, to be the word $\sigma_1\sigma_2\ldots\sigma_{i-2}\sigma_{i-1}\sigma_{i-2}\ldots\sigma_2\sigma_1$
\end{definition}

Note that $\theta(W_i)$ is the transposition $(1\ \ i-1)$.

\begin{lemma}
 It is possible to construct the closure of the $n$-braid 
\[W_{i_1} W_{i_2}\ldots W_{i_{n-1}} \in B_n\]  
as long as $i_1, i_2, \ldots, i_m$ are all distinct, by applying $m$ moves of
type gluing move 1
  \label{permutationWord}
\end{lemma}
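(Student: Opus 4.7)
The plan is to prove this by induction on $m$, with the crucial observation being a bookkeeping argument on connected components of the partially glued curve.

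First, I would set up the base case: for $m=0$ the statement is vacuous, as the closure of the identity braid $I_n$ is just the $n$ parallel vertical ellipses with no gluing applied. For the inductive step, assume that after $k-1$ applications of gluing move 1, using the distinct parameters $i_1, \ldots, i_{k-1}$, we have realized the closure of $W_{i_1} W_{i_2} \cdots W_{i_{k-1}}$. I need to exhibit one further gluing that yields the closure of $W_{i_1} \cdots W_{i_k}$.

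The heart of the argument is the connected-components bookkeeping. A gluing move of type 1 with parameter $i$ is only legal when the first vertical ellipse and the $i$-th vertical ellipse lie in \emph{distinct} components of the current partially glued curve; otherwise the operation would be a self-gluing, which is forbidden (and, as remarked in the paper, would produce a non-rational curve of positive genus). Starting from $n$ disjoint ellipses, each legal gluing of type 1 merges exactly two components into one. After $k-1$ gluings with pairwise distinct parameters $i_1, \ldots, i_{k-1}$, the $i_k$-th ellipse has never participated in any previous gluing because $i_k \notin \{i_1, \ldots, i_{k-1}\}$, and hence still forms its own connected component, disjoint from the component containing the first ellipse. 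Therefore the next application of gluing move 1 with parameter $i_k$ is legal.

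Geometric realisability of the $k$-th gluing follows from the locality of the gluing construction established in Theorem~\ref{maingluingtheorem}: each horizontal ellipse is modified only in an arbitrarily small neighbourhood of its intersection points with the vertical ellipses, so we can simply place the new horizontal ellipse at a vertical height distinct from the previous $k-1$ horizontal ellipses. Because the horizontal ellipses are stacked in sequence, reading off the corresponding braid from top to bottom (after the standard isotopy that makes the strands monotone) concatenates the individual words $W_{i_j}$ in order, giving the braid $W_{i_1} W_{i_2} \cdots W_{i_k}$ as required.

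The main conceptual obstacle is really just the component-tracking: one might worry that although the $i_k$-th ellipse is untouched, the \emph{first} ellipse has been drastically modified by previous gluings and might no longer be available as a target for a new horizontal ellipse. This is resolved by noting that each prior gluing only perturbs the first ellipse in a tiny neighbourhood of its intersection with the corresponding previous horizontal ellipse, so an unperturbed arc of the first ellipse is still available at a fresh height to serve as a gluing site. No further technical difficulty arises, and the induction closes.
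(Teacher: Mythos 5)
Your proposal is correct and follows essentially the same route as the paper: an induction in which the key point is that, because $i_{k+1}\notin\{i_1,\ldots,i_k\}$, the $i_{k+1}$-th vertical ellipse is still a connected component disjoint from the one containing the first ellipse, so the two attachments comprising a type-1 move never constitute a self-gluing. The extra remarks you make about stacking the horizontal ellipses at fresh heights and the locality of the gluing are implicit in the paper's phrase ``below all the already glued horizontal ellipses,'' so there is no substantive difference.
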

\begin{proof}
  We will proceed by induction
 on the number of strands of the braid. Assume we have already glued horizontal
 ellipses corresponding to the words $W_{i_1},\ldots, W_{i_k}$. Below
 all the already glued horizontal ellipses, glue a horizontal ellipse to the
 first ellipse so that its other end passes through a point that will be
 contained in the $i_{k+1}$th ellipse (presently omitted). Observe that since $i_{k+1} \neq i_j$ for $j \leq k$, the
 $i_{k+1}$th ellipse is a distinct component from what is obtained by gluing
 horizontal ellipses corresponding to the words $W_{i_1},\ldots, W_{i_k}$ along with the horizontal ellipse that has just been
 glued. Therefore we can glue the $i_{k+1}$th ellipse also to the horizontal
 circle as shown in figure~\ref{fig:braidmove}. Once we have glued all the
 $n$ vertical ellipses, we would have obtained the closure of a braid that
 corresponds to the given word.
\end{proof}

The point of the lemma is this corollary (recall that the image under $\theta$ of a braid is a cycle if and only if its
closure is a knot i.e. a one component link):

\begin{corollary}
  \label{permutation}
 Given any cycle $\tau \in S_n$ there exists a fixed glued real rational knot
 which is the closure of a braid in the coset $\theta^{-1}(x)$. For a given
 permutation $\tau$, we denote this braid by $\beta_{\tau}$
\end{corollary}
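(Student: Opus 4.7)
The plan is to reduce the corollary directly to Lemma~\ref{permutationWord}. Since we need the closure of the braid to be a knot (a single component), $\tau$ must be an $n$-cycle, and in particular must contain the symbol $1$. I would express $\tau$ as a product of $n-1$ transpositions of the form $(1\ k)$ with distinct second entries $k \in \{2, \ldots, n\}$; each such transposition is $\theta(W_i)$ for the unique admissible $i$, so Lemma~\ref{permutationWord} will then supply the geometric construction.

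The key combinatorial identity I would verify is that in $S_n$,
\[
(1\ a_{n-1})(1\ a_{n-2}) \cdots (1\ a_1) \;=\; (1\ a_1\ a_2\ \cdots\ a_{n-1}),
\]
whenever $a_1, \ldots, a_{n-1}$ is an enumeration of $\{2, \ldots, n\}$. This follows by a straightforward induction on the number of factors, or equivalently by tracking the image of $1$ and of each $a_j$ as the transpositions are applied from right to left. Consequently, writing the given $n$-cycle in the canonical form $(1\ a_1\ a_2\ \cdots\ a_{n-1})$ and reading off the $a_j$ in reverse order produces the desired expression of $\tau$ as a product of transpositions through $1$ with distinct partners.

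Translating back through the correspondence $i \leftrightarrow \theta(W_i)$ turns this data into distinct indices $i_1, \ldots, i_{n-1}$ in $\{2,\ldots,n\}$ with $\theta(W_{i_1} W_{i_2} \cdots W_{i_{n-1}}) = \tau$. Lemma~\ref{permutationWord}, applied to this tuple, then constructs the closure of this braid as a real rational knot, and I would define $\beta_\tau := W_{i_1} W_{i_2} \cdots W_{i_{n-1}}$; by construction $\beta_\tau \in \theta^{-1}(\tau)$. The only subtlety I anticipate is ensuring that the order of multiplication in $S_n$ is consistent with the top-to-bottom order in which the horizontal ellipses are glued in the picture, as well as with the sign convention in $\theta(W_i) = (1\ i-1)$ stated in the text; these are bookkeeping issues rather than a genuine mathematical obstacle.
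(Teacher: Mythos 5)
Your proposal is correct and follows essentially the same route as the paper: write the $n$-cycle with $1$ in the first position, decompose it as a product of $n-1$ transpositions through $1$ with distinct partners, identify each with some $\theta(W_i)$, and invoke Lemma~\ref{permutationWord}. The only difference is that you spell out the ordering/composition-convention check that the paper leaves implicit, which is a harmless (and arguably welcome) addition.
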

\begin{proof}
Represent the cycle $\tau =(i_1\ i_1\ \ldots\ i_n)$  so that $i_1=1$. Then this
cycle can be written as the product of transpositions like this: $(1\ i_2)(1\ 
i_3)\ldots (1\  i_n)$. However, this is precisely $\theta(W_{i_2}W_{i_2}\ldots
W_{i_n})$ where $i_1, i_2, \ldots, i_n$ are all distinct.
\end{proof}

The above corollary has produced one glued closure of a braid for each possible
cyclic permutation. Owing to theorem~\ref{purebraidisomorphism}, the union of
the cosets $\beta_{\tau}P_n$, where $\tau$ ranges over all possible cyclic
permutations, is the entire braid group. Therefore, we only have to prove that
we can append the above words with words that represent all the possible pure
braids. It is well known~\cite{birman2016braids, manturov2004knot} that the pure braid group can be generated by elements
of the following type:

\begin{definition}
 A word is called a standard pure braid generator, denoted by $B_{i, j}$, if it
 is of the form $\sigma_i\sigma_{i+1}\ldots\sigma_{j-1}^2\ldots \sigma_i$.
\end{definition}

$B_{i.j}$ is precisely a gluing move of type 2, and therefore:
  
\begin{lemma}
 It is possible to construct the closure of the $n$-braid 
\[W_{i_1} W_{i_2}\ldots W_{i_{n-1}} B_{j_1, k_1}B_{j_2, k_2}\ldots B_{j_l, k_l} \in B_n\]
 as long as $i_1, i_2, \ldots, i_m$ are all distinct, by performing $l$ gluing
 moves of type 2.
\end{lemma}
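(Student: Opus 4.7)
The plan is to proceed by induction on $l$, with Lemma~\ref{permutationWord} providing the base case $l=0$: that lemma already realizes the closure of the transpositional part $W_{i_1}\cdots W_{i_{n-1}}$ using the $m$ prescribed gluing moves of type~1, so no type~2 moves are needed.

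For the inductive step, I assume a real rational realization of the closure of $W_{i_1}\cdots W_{i_{n-1}} B_{j_1,k_1}\cdots B_{j_r,k_r}$ has been assembled, with every horizontal ellipse already glued lying above some horizontal level $h_r$ in the ambient picture. Below $h_r$ I introduce a fresh horizontal ellipse $E_{r+1}$ whose projection is a thin loop emanating from the $j_{r+1}$-th vertical ellipse, crossing the intermediate vertical ellipses indexed $j_{r+1}+1,\ldots,k_{r+1}-1$, wrapping once around the $k_{r+1}$-th vertical ellipse with the crossing switch indicated in figure~\ref{fig:braidmove}, and returning to the $j_{r+1}$-th ellipse. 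If $E_{r+1}$ is chosen sufficiently thin, its projection meets no other vertical ellipse, so the sequence of crossings it contributes reads exactly
\[\sigma_{j_{r+1}}\sigma_{j_{r+1}+1}\cdots\sigma_{k_{r+1}-1}^{2}\cdots\sigma_{j_{r+1}+1}\sigma_{j_{r+1}} \;=\; B_{j_{r+1},k_{r+1}}.\]
I then glue $E_{r+1}$ to the $j_{r+1}$-th vertical ellipse at its single tangency point using Theorem~\ref{maingluingtheorem}; this is precisely one gluing move of type~2. Since $E_{r+1}$ was placed entirely below the previous configuration, the braid word reads from top to bottom as the old word followed by $B_{j_{r+1},k_{r+1}}$, and since the gluing at one point merely joins the previously-separate component $E_{r+1}$ into the existing knot, the result is still a knot. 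After $l$ such steps we have used exactly $l$ type~2 moves and realized the stated braid closure.

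The main thing that needs to be checked is that the horizontal ellipse $E_{r+1}$ can be drawn with the prescribed crossing pattern, and no extras, without interfering with the already-assembled structure. The flexibility here comes from three sources: the ellipse can be made arbitrarily thin to avoid projections of vertical ellipses outside the range $[j_{r+1},k_{r+1}]$; the plane of the ellipse can be tilted so as to freely prescribe which strand passes over which at each crossing; and the placement below height $h_r$ keeps it clear of all ellipses already glued. This is the geometric content of the description of the type~2 gluing move in figure~\ref{fig:braidmove}, so the inductive step goes through and the lemma follows.
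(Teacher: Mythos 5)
Your overall strategy coincides with the paper's: induction on the number of pure braid generators, with Lemma~\ref{permutationWord} as the base case and one type~2 move per generator $B_{j_{r+1},k_{r+1}}$, glued below everything already constructed so that single-point gluing never forces a self-gluing. That much is fine and matches the paper's proof.

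The gap is in your justification of the inductive step, specifically the claim that ``the plane of the ellipse can be tilted so as to freely prescribe which strand passes over which at each crossing.'' An ellipse is a rigid planar conic, not an arbitrary embedded loop: once its supporting plane is fixed, the over/under data at all $2(k_{r+1}-j_{r+1})$ crossings of its projection with the intervening vertical ellipses are determined simultaneously, and they cannot be chosen independently. This rigidity is exactly the ``first difficulty'' the paper flags in the introduction, and it is why the paper's definition of a gluing move of type~2 explicitly includes switching the last crossing so that the horizontal ellipse actually surrounds the $k$th vertical ellipse; the paper's proof of this lemma accordingly glues \emph{two} ellipses per generator $B_{j,k}$ with $k>j+1$ --- the horizontal one plus a small one performing the crossing change of Theorem~\ref{switchingcrossings} --- and only one when $k=j+1$. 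Your version silently drops this extra ellipse. The omission does not invalidate the statement as literally phrased (a ``gluing move of type~2'' is defined to include the switch), but your argument for why the naked ellipse realizes the word $B_{j_{r+1},k_{r+1}}$ is not correct as written, and the missing ellipses are precisely what produce the $4l-2r$ contribution to the degree $4n+4l-2r-2$ in the main theorem, so the bookkeeping downstream would come out wrong if one followed your count.
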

\begin{proof}
 We already know by lemma~\ref{permutationWord} that we can construct the closure of $W_{i_1} W_{i_2}\ldots W_{i_{n-1}}$ by gluing $n$ ellipses. We will prove the rest by induction. Assume that we can construct $W_{i_1} W_{i_2}\ldots W_{i_{n-1}} B_{j_1, k_1}B_{j_2, k_2}\ldots B_{j_r, k_r}$ by gluing at most $n-1+2r$ horizontal ellipses, then we can glue an ellipse as shown in the second part of figure~\ref{fig:braidmove} and another to switch the crossing. Observe that there are no restrictions on the number of such ellipses that we glue because one is gluing it only at one point and therefore never encountering a self gluing.
\end{proof}

Note that a word $B_{i,i+1}$ does not require a switch of crossings and
therefore requires the gluing of a single ellipse rather than two. It is easy to
keep a track of the number of ellipses that are glued and therefore we can
collect all the above in our main theorem:

\begin{theorem}
 Given an $n$-braid $b$, such that $\theta(b)$ is the cycle represented by $(1\ i_1\ i_2\ \ldots\ i_n)$, the pure braid $W_{i_n}^{-1}W_{i_{n-1}}^{-1}\ldots W_{i_1}^{-1}b$ can be expressed as the product of the standard generators
 $B_{i,j}$. If it is the product of $l$ such generators, out of which $r$ of
 them are of the form $B_{i, i+1}$ then there exists a real algebraic knot of degree $4n+4l-2r-2$, which can be constructed by gluing ellipses. 
\end{theorem}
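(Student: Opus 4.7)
The plan is to show that the element $W_{i_n}^{-1}W_{i_{n-1}}^{-1}\cdots W_{i_1}^{-1}b$ lies in the pure braid group $P_n$, so that it can be written as a product of the standard generators $B_{j,k}$, and then to carry out the corresponding sequence of ellipse gluings while carefully tracking the degree contributed at each step.

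First I would apply the homomorphism $\theta$ of Theorem~\ref{purebraidisomorphism}. Since each $\theta(W_{i_k})$ is the transposition $(1\ i_k)$, the image of $W_{i_n}^{-1}\cdots W_{i_1}^{-1}b$ in $S_n$ is the product of transpositions $(1\ i_n)(1\ i_{n-1})\cdots(1\ i_1)$ composed with the cycle $\theta(b)$. The decomposition identity invoked in the proof of Corollary~\ref{permutation} shows that this product of transpositions is exactly the inverse of that cycle, so the overall image collapses to the identity permutation. Hence $W_{i_n}^{-1}\cdots W_{i_1}^{-1}b \in P_n$, and by the classical presentation of the pure braid group cited earlier~\cite{birman2016braids, manturov2004knot}, it admits an expression as a product $B_{j_1,k_1}B_{j_2,k_2}\cdots B_{j_l,k_l}$ for some $l$.

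For the geometric construction I would realize the closure of $W_{i_1}\cdots W_{i_{n-1}}\,B_{j_1,k_1}\cdots B_{j_l,k_l}$ step by step, using that by the pure braid identification above, $b$ is isotopic to this word. Begin with a single vertical ellipse of degree~$2$. Apply the $n-1$ type-1 gluing moves furnished by Lemma~\ref{permutationWord}, one for each $W_{i_k}$: each such move glues a horizontal ellipse (contributing $+2$ to the degree by Theorem~\ref{maingluingtheorem}) followed by a fresh vertical ellipse (another $+2$), so after all of them the running degree is $2 + 4(n-1) = 4n - 2$ and the result is the closure of $W_{i_1}\cdots W_{i_{n-1}}$. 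Next, perform the $l$ type-2 moves corresponding to the $B_{j_s,k_s}$. Each attaches one horizontal ellipse (degree $+2$); when $k_s \neq j_s + 1$ a further crossing switch is required, which by Theorem~\ref{switchingcrossings} costs another $2$. Of the $l$ moves, the $r$ of the form $B_{i,i+1}$ therefore contribute $2$ each and the remaining $l-r$ contribute $4$ each, giving a total of $2r + 4(l-r) = 4l - 2r$. Summing yields the claimed degree $4n - 2 + 4l - 2r = 4n + 4l - 2r - 2$.

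The main subtlety I expect is verifying that each gluing is geometrically admissible: the newly glued ellipse must meet the current knot transversally at exactly one point (plus one auxiliary transverse point for the crossing switches), and the gluings must never force a self-intersection, which would violate rationality. For the type-2 moves this is immediate, since each attaches at a single point of one vertical ellipse and does not decrease the component count. For the type-1 moves, the horizontal ellipses are stacked one beneath another as in Figure~\ref{fig:braidmove}, and the distinctness of the indices $i_1,\ldots,i_{n-1}$, forced by $\theta(b)$ being a cycle, is exactly what ensures that each new horizontal ellipse joins the already-built component to a hitherto disjoint vertical ellipse; consequently at every stage the curve remains a single rational knot, and after the final gluing it realizes the required braid closure in degree $4n + 4l - 2r - 2$.
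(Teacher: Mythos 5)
Your proposal is correct and follows essentially the same route as the paper: the paper presents this theorem as a summary of Lemma~\ref{permutationWord}, Corollary~\ref{permutation}, and the subsequent lemma on type-2 gluing moves, leaving the degree bookkeeping implicit, and your count ($n$ vertical ellipses and $n-1$ horizontal ellipses giving $4n-2$, plus $2$ per $B_{i,i+1}$ and $4$ per other generator giving $4l-2r$) matches exactly. Your explicit verification via $\theta$ that the modified braid is pure, and your attention to the admissibility of each gluing, simply makes precise what the paper delegates to the earlier lemmas.
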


The above method reduces the the problem of constructing a real rational
representative of a given knot to the purely algebraic problem of finding its
decomposition in terms of the generators $W_i$ and $B_{j,k}$, which we have
proved is always possible. The method is algorithmic, and we use it to construct
the trefoil and figure-eight.

\begin{example}
 The trefoil is the closure of the 2-braid $\sigma_1^3$. $\theta(\sigma_1^3)=
 (1\ 2)$. and $W_2^{-1}\sigma_1^3= \sigma_1^2=B_{1,2}$. Therefore, there exists a
 real rational trefoil (the closure of $W_2B_{1,2}$) of degree~8 as shown in figure~\ref{fig:braidtrefoil}.
\end{example}

\begin{example}
 The figure-eight knot is the closure of the 3-braid
 $\sigma_1\sigma_2^{-1}\sigma_1\sigma_2^{-1}$. Its image under $\theta$  is
 $(1\ 2\ 3)$ and it can be checked that
 $W_3^{-1}W_2^{-1}\sigma_1\sigma_2^{-1}\sigma_1\sigma_2^{-1}=B_{1,2}B_{2,3}$.
 Therefore, there exists a real rational representation of the figure-eight
 knot (the closure of $W_2W_3B_{1,2}B_{2,3}$) of degree 14 as show in the figure~\ref{fig:braidfigureeight}
\end{example}

\begin{remark}
 It is natural to ask if the group operation on braids, which is obtained by
 concatenation, can be obtained by gluing closures. It can be done only for pure
 braids because otherwise it would involve self gluing. Nevertheless, since the
 pure braid subgroup is a normal subgroup of the braid group, and the quotient
 is the permutation group so the problem is reduced to being able to fixing
 words representing a braid for each permutation that can be appended to the
 given braid by gluing.  
\end{remark}

\begin{remark}
 For simplicity, we restricted our attention to knots. However, the same method
 can be used to construct all possible real rational links, defined as the union
 of real rational knots.
\end{remark}

\section{Projective to affine}

\begin{definition}
 An affine knot is a projective knot that lies in the affine part of $\mathbb{RP}^3$ 
\end{definition}

Before discussing the other methods, let us first deal with the problem of
converting a projective real rational knot to an affine real rational knot by
gluing. To do so we will need the following lemma:
\begin{lemma}
 Given a real algebraic knot $k$ of degree~$d$, that intersects the plane at
 infinity in $n$ points, there exists a real algebraic knot of degree $d+1$
 isotopic to the given knot, which intersects the plane at infinity in $n-1$ points.
 \label{lem:affine}
\end{lemma}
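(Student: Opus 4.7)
The plan is to glue a projective line $L$ to $k$ at one of the intersection points $p \in k \cap H$, where $H$ denotes the plane at infinity. Since a projective line has degree $1$, Theorem~\ref{maingluingtheorem} produces a glued knot $k'$ of degree $d+1$, as required.

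First I would fix an intersection point $p \in k \cap H$ and choose a projective line $L$ through $p$ in generic position, so that $L$ meets $k$ transversally only at $p$, meets $H$ transversally only at $p$ (so $L$ is not contained in $H$), and the $2$-plane $P$ through $p$ spanned by $T_p k$ and $T_p L$ is transversal to $H$. Apply Theorem~\ref{maingluingtheorem} to glue $k$ and $L$ at $p$ to obtain the candidate knot $k'$ of degree $d+1$.

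Next I would verify that $|k' \cap H| = n-1$ by a local-global analysis. Outside a small neighborhood $U$ of $p$, part~(1) of Theorem~\ref{maingluingtheorem} places $k'$ in a tubular neighborhood of $k \cup L$, which contributes one real intersection with $H$ close to each of the other $n-1$ points of $k \cap H$ and none along $L$ (since $L \cap H = \{p\} \subset U$). Inside $U$, part~(2) of the same theorem supplies coordinates in which $k \cup L$ appears as two lines through $p$ in the plane $P$, while $k'$ is the hyperbola $xy = \epsilon$ in $P$. The trace $H \cap P$ is a third line through $p$, distinct from both asymptotes; substituting its equation into $xy=\epsilon$ gives a quadratic whose roots are real only for one sign of $\epsilon$. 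Choosing the opposite sign---which is a free parameter in the gluing construction---makes $k' \cap H \cap U$ empty, so $|k' \cap H| = n-1$ in total.

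The main obstacle is the last step: showing that $k'$ is isotopic to $k$ as a knot in $\mathbb{RP}^3$. Set-theoretically, $k'$ differs from $k$ only by a brief hyperbolic detour at $p$ that threads through $L$, and one would like an ambient isotopy that collapses this detour back onto $k$. The delicate point is that a projective line represents the non-trivial generator of $\pi_1(\mathbb{RP}^3) = \mathbb{Z}/2$, so the naive ``connect sum with $L$'' reading would change the homotopy class. The proof must therefore carefully track the parametrization of $k'$---identifying which branch of the hyperbola is used by the smoothing and how the traversal of $L$ fits with the flow along $k$---and exhibit an explicit ambient isotopy, supported in a controlled neighborhood of $L \cup U$, that contracts the detour onto $k$ without sweeping through the rest of $k'$. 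I expect this isotopy verification, and the accompanying homological bookkeeping, to be the genuine core of the lemma.
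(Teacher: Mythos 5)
Your intersection count is, in substance, the paper's argument. The paper writes the glued parametrization $G_{\lambda_1,\lambda_2}(k,L)$ explicitly and observes that the polynomial whose real roots cut out $k'\cap H$ has, at $\lambda_2=0$, a double root at the parameter of $p$ together with $n-1$ simple real roots; for one sign of $\lambda_2$ the double root splits into a conjugate imaginary pair while the simple real roots persist, giving $n-1$ real intersections. Your local picture --- hyperbola $xy=\epsilon$ versus a third line $H\cap P$ through the origin, with the sign of $\epsilon$ chosen so the quadratic has no real roots --- is exactly the geometric shadow of that computation, and your accounting of the $n-1$ transversal intersections away from $U$ is fine. So the part of the lemma you actually prove coincides with the part the paper actually proves.

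The gap is the isotopy claim, which you correctly single out as ``the genuine core'' but leave undone --- and here your own worry about $\pi_1(\mathbb{RP}^3)$ is not a technicality to be bookkept away; it is a genuine obstruction. The glued curve $k'$ lies in a tubular neighbourhood of $k\cup L$ away from $p$ and is obtained by smoothing one transversal crossing, so $[k']=[k]+[L]$ in $H_1(\mathbb{RP}^3;\mathbb{Z}/2)$; since a projective line generates this group, $[k']\neq[k]$, and no ambient isotopy between $k'$ and $k$ can exist. (Equivalently: the class of a real rational knot is its degree mod $2$, read off by intersecting with a generic plane, so a degree-$(d+1)$ curve is never isotopic to a degree-$d$ one. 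The case $k=$ a line already shows the problem: gluing a second line yields an affine ellipse, which is null-homotopic while the line is not.) So you should not expect to ``exhibit an explicit ambient isotopy that contracts the detour onto $k$''; that step will fail for any choice of $L$ and any smoothing. For a fair comparison: the paper's proof is silent on the isotopy altogether --- it proves only the statement about intersection points with the plane at infinity --- so it does not close this gap either; the lemma as stated needs to be weakened (e.g.\ to a claim about the resulting knot after an even number of such gluings, or about the affine tangle) before either argument can be completed.
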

\begin{proof}
  The knot of degree ~$d+1$ will be constructed by gluing a line $L$ at the point of intersection of $k$ with the plane at infinity. By the gluing lemma, we know that for small enough $\epsilon_1$ and $\epsilon_2$, $G_{s, t}(k, L)$ is the glued knot for all $s <\epsilon_1$ and $t< \epsilon_2$. Denote the point of intersection of the line with the knot by $p$. We will prove that there exist $\epsilon_1'$ and $\epsilon_2'$ such that for all $s<\epsilon_1'$ and $t<\epsilon_2'$, the complexification of $G_{s, t}(k, L)$ intersects the plane at infinity in an imaginary pair in a small neighbourhood around $p$. 
  
Choose (affine) coordinates so that $p$ is given coordinates $(0, 0, 0)$ and the plane at infinity is defined by $x_1=0$. Choose a parametrization of the line which takes 0 to $(0, 0, 0)$. Such a
parametrization would be of the form $t \to (at, bt, ct)$. Let the
paramatrization of the knot be $(p_1/p_0, p_2/p_0, p_3/p_0)$ for some
polynomials $p_i$ so that $p_i(0) = 0$ for $i=1, 2, 3$.  Then $G_{\lambda_1, \lambda_2}(k, L) =  (p_1(\lambda_1t)/p_0(\lambda_1t),
p_2(\lambda_1t)/p_0(\lambda_1t), p_3(\lambda_1t)/p_0(\lambda_1t)) +
(\lambda_1a/t, \lambda_2b/t, \lambda_3c/t)$. This would intersect the plane at infinity whenever $tp_0(\lambda_1t) + \lambda_2ap_0(\lambda_1t) = 0$. Note that for $\lambda_2=0$, this is merely $tp_0(\lambda_1t)=0$ and since we chose a parametrization which takes 0 to $(0, 0, 0)$, 0 is a double root of $tp_0(\lambda_1t)$ and therefore $tp_0(\lambda_1t)$ has $n+1$ roots. Therefore, for a small perturbation, depending on whether $\lambda_2$ is positive or negative,  the double root 0 of $tp_0(\lambda_1t) + \lambda_2ap_0(\lambda_1t)$ will change to either a pair of real roots or a pair of conjugate imaginary roots. Choose the perturbation which results in a pair of imaginary roots. Since the perturbation is small enough, the rest of the real roots remain real and the conjugate imaginary pairs remain imaginary. Therefore,
$tp_0(\lambda_1t) + \lambda_2ap_0(\lambda_1t)$ has $n-1$ real roots and the
result follows. 
\end{proof}

\begin{figure}
  \begin{tabular}{cc}
   \includegraphics[width=6cm]{./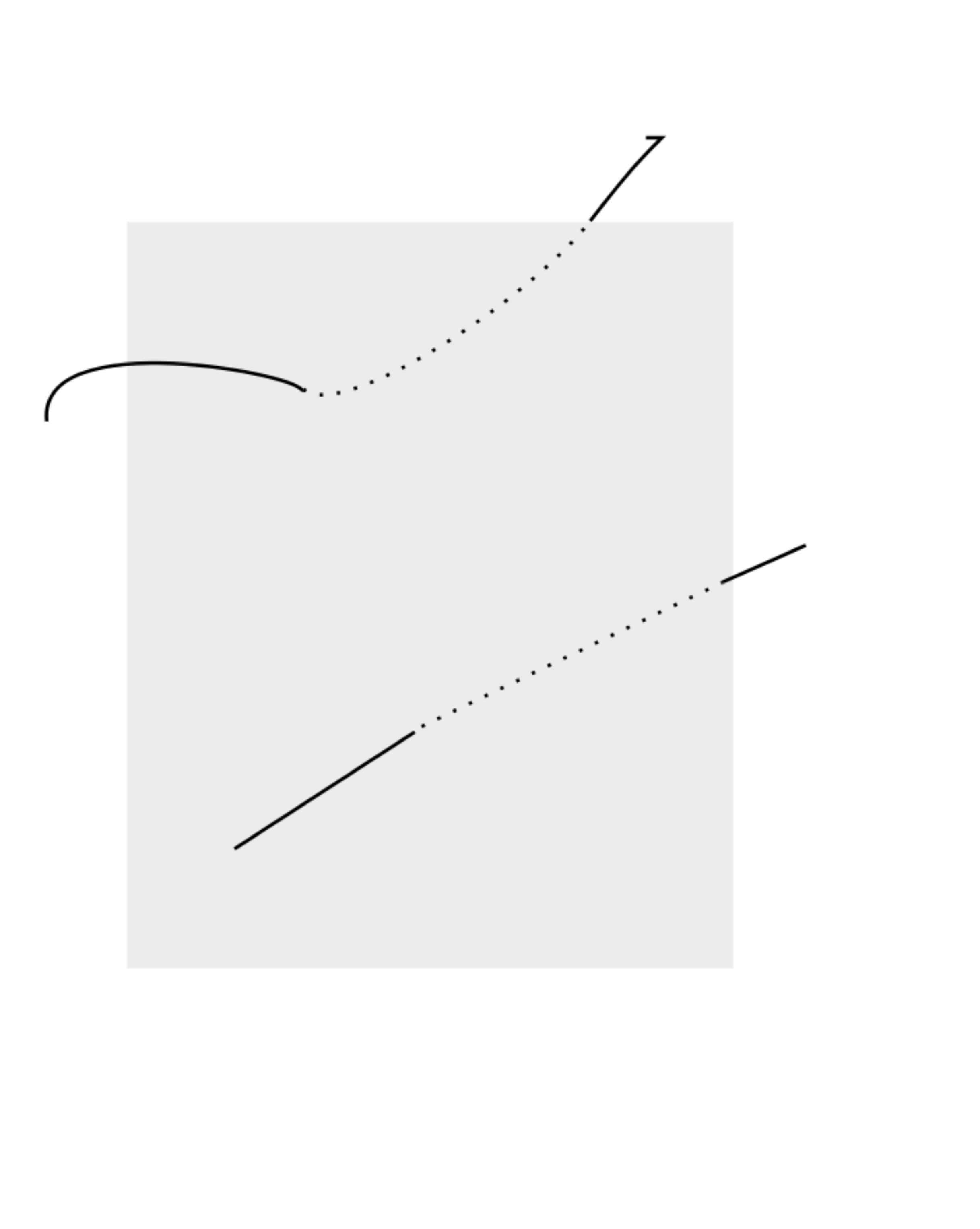}&
   \includegraphics[width=6cm]{./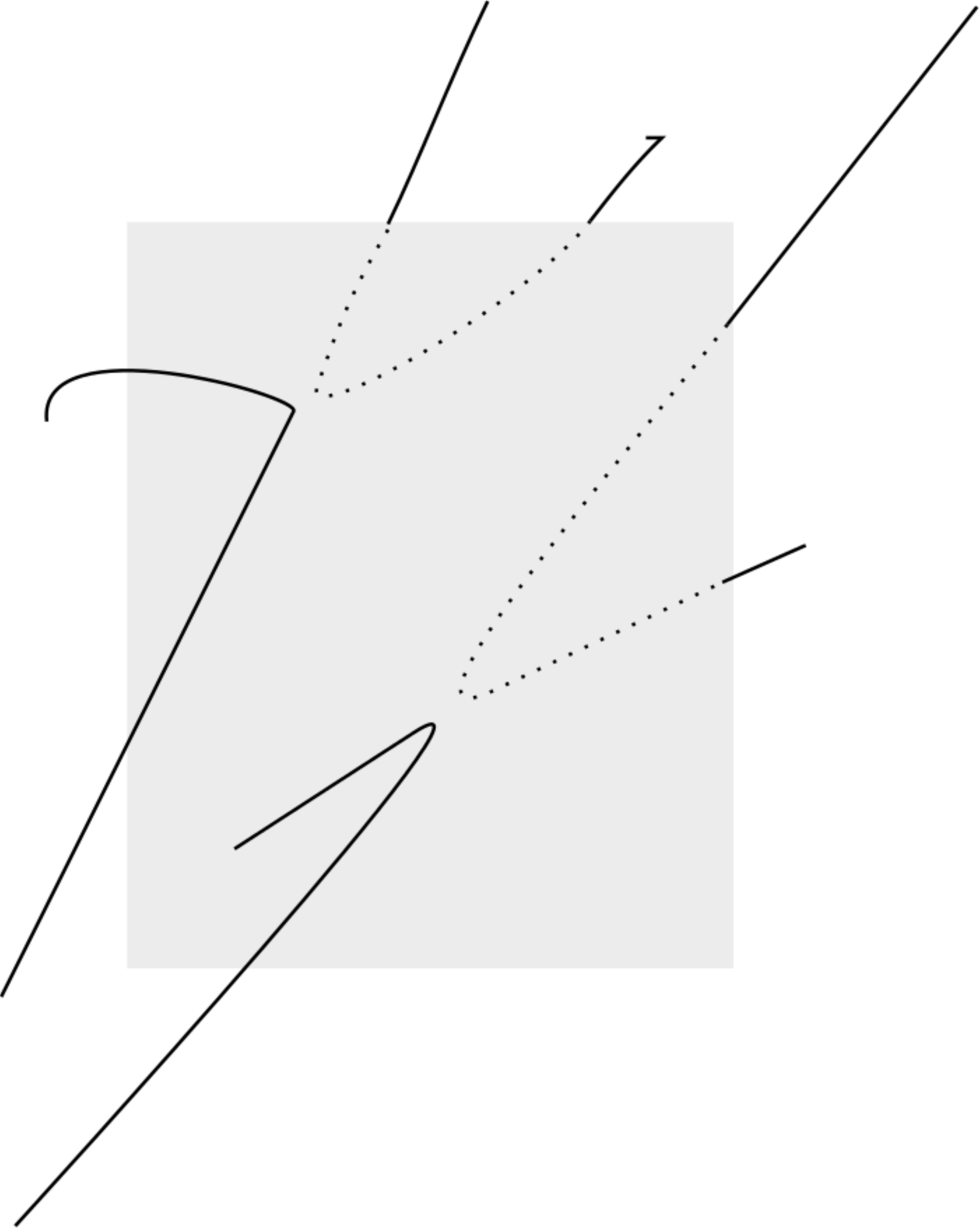}
  \end{tabular}
  \caption{Converting a non-affine curve to an affine curve by gluing lines}
  \label{fig:nonaffinetoaffine}
\end{figure}
\begin{theorem}
  If a plane intersects a knot of degree~$d$ in $k$ points, then there exists an
  affine version of the knot which can be obtained by gluing $k$ lines and
  therefore increases the degree by $k$.
\end{theorem}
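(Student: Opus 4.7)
The plan is a straightforward induction on $k$, leaning entirely on Lemma~\ref{lem:affine} for the non-trivial analytic content. First I would choose projective coordinates on $\mathbb{RP}^3$ so that the given plane becomes the hyperplane at infinity $\{x_0=0\}$; then an affine representative is precisely one that avoids this hyperplane.

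The base case $k=0$ is trivial: the knot is already affine and no lines need to be glued. For the inductive step, suppose that after $j$ iterations I have produced a real rational knot $\tilde k_j$ of degree $d+j$, isotopic to the original, obtained from it by gluing $j$ lines, and meeting the plane at infinity in exactly $k-j$ points. If $j<k$, I would pick any remaining point of intersection and invoke Lemma~\ref{lem:affine} at that point to produce a knot $\tilde k_{j+1}$ of degree $d+j+1$, isotopic to $\tilde k_j$ (and hence by transitivity to the original), obtained from $\tilde k_j$ by gluing one further line, and meeting the plane at infinity in $k-j-1$ points. After $k$ such iterations I would obtain an affine knot of degree $d+k$ produced by gluing a total of $k$ lines to the original, as claimed.

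The only subtlety I anticipate is guaranteeing that the $k$ applications of Lemma~\ref{lem:affine} can be chained. A priori, the gluing performed at some intersection point $p_j$ could spoil one of the remaining intersections with the plane at infinity, breaking the induction. However, the proof of Lemma~\ref{lem:affine} only alters the knot inside an arbitrarily small neighbourhood of $p_j$, since the gluing parameters $\epsilon_1,\epsilon_2$ can be chosen as small as one likes. So by shrinking them sufficiently at each step one preserves the other $k-j-1$ intersection points as honest transverse intersections of the knot with the plane at infinity, available for use in the next iteration. With that remark the induction closes and the remainder is just bookkeeping of the degree increase.
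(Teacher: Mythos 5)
Your proof is correct and is essentially the paper's own argument: the paper also just applies Lemma~\ref{lem:affine} repeatedly after choosing coordinates so that the plane is the plane at infinity. Your extra remark about shrinking the gluing parameters so that the remaining intersection points survive each step is a detail the paper leaves implicit, but it is the same approach.
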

\begin{proof}
Repeated application of lemma~\ref{lem:affine} ensures that
that the knot does not intersect the plane. Choose coordinates so that the plane is the plane at infinity.
\end{proof}

\section{The Polygonal Approach}
\label{sec:polygonal}
\begin{figure}[h]
\begin{center}
  \includegraphics[width=6cm]{./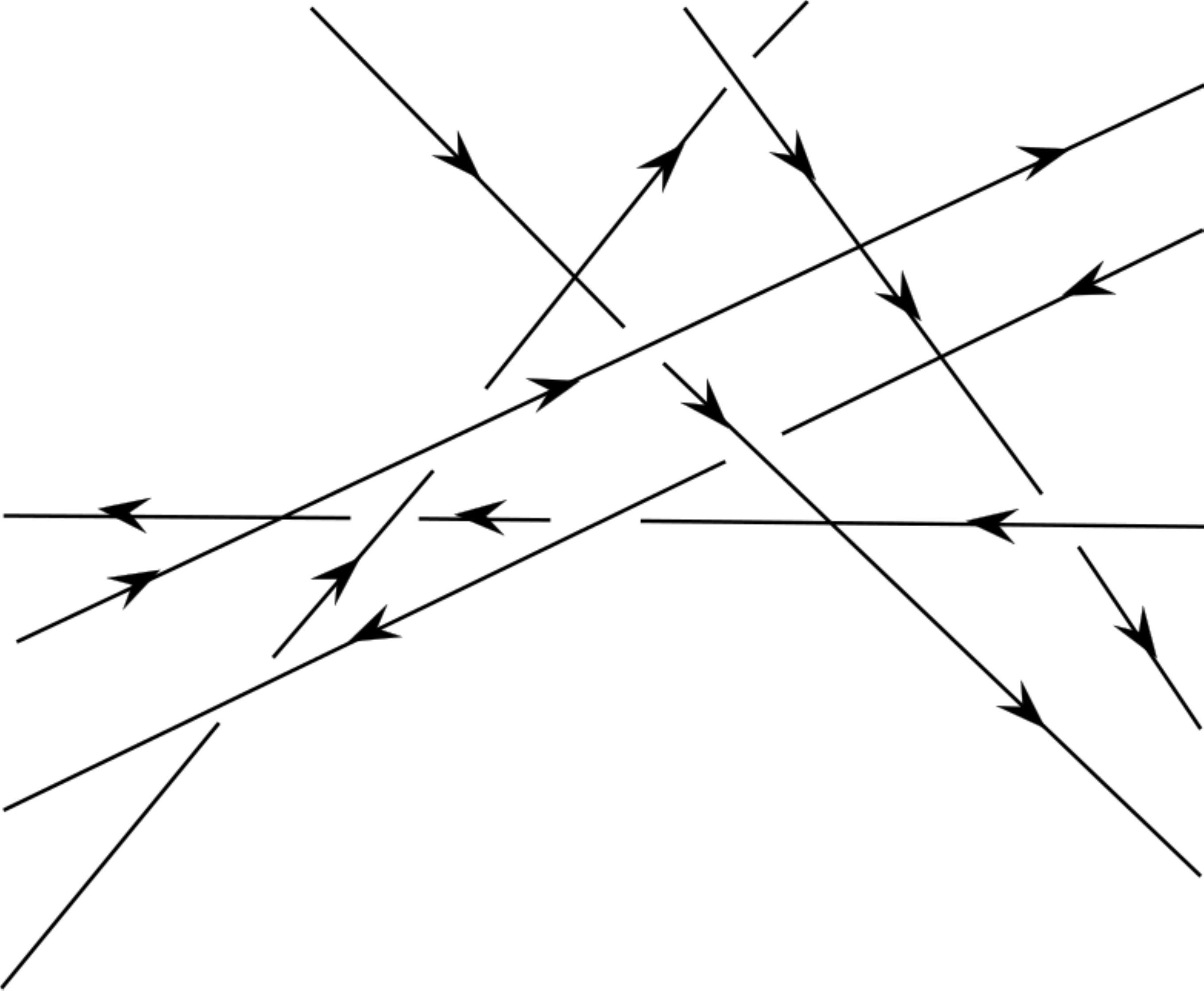}  
\begin{tabular}{cc}
  \includegraphics[width=6cm]{./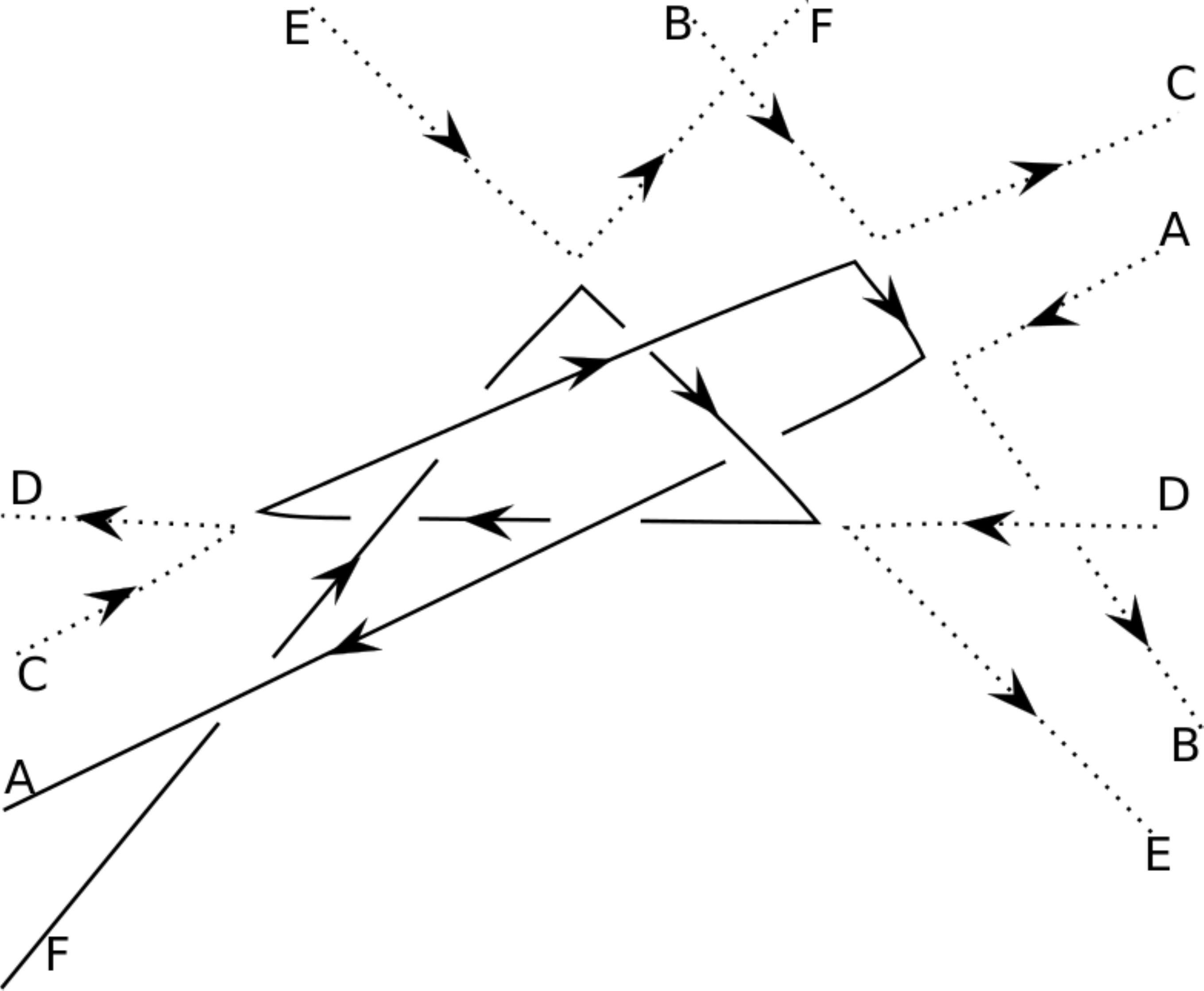} & 
  \includegraphics[width=6cm]{./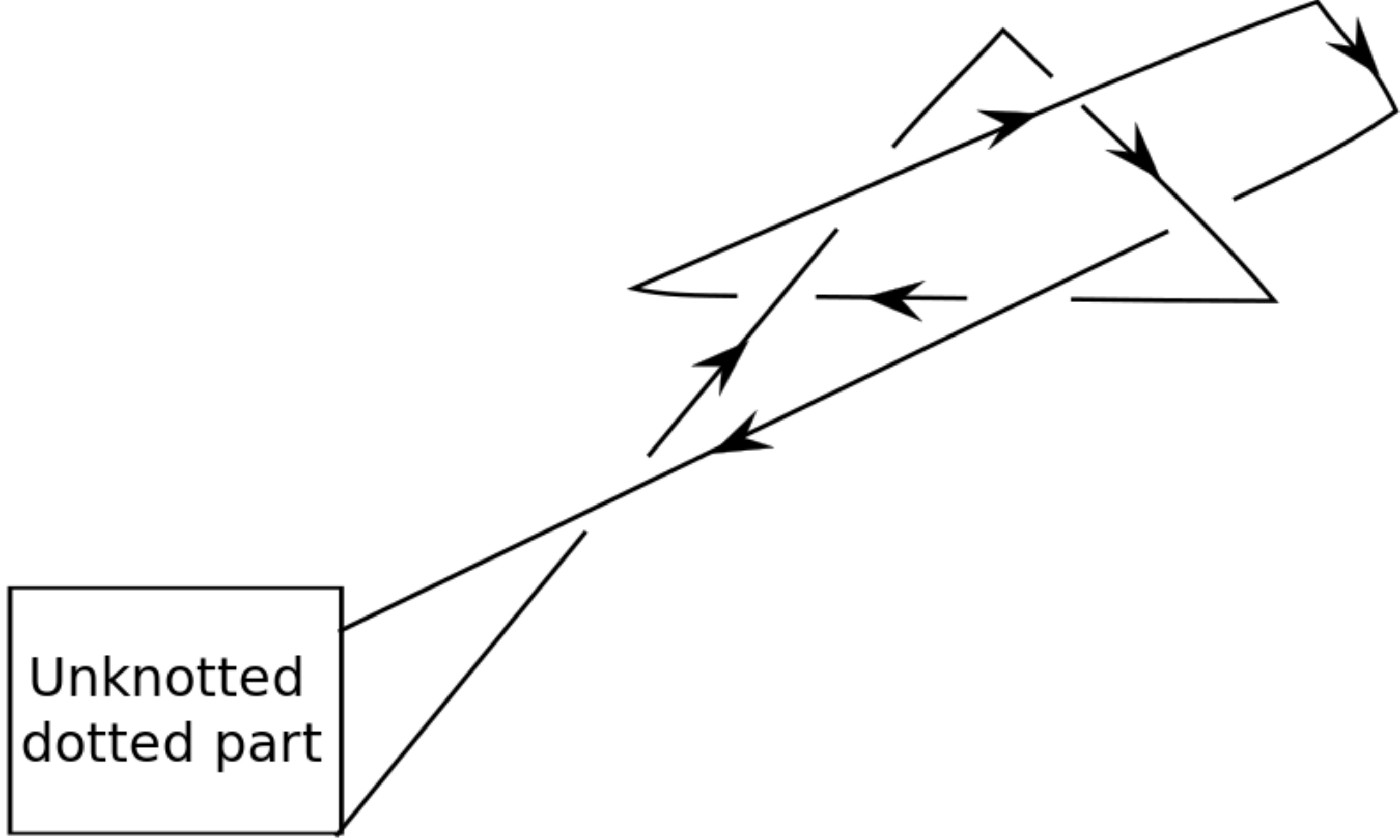}\\
After gluing lines & Unknotting the dotted part 
\end{tabular}
\end{center}
  \caption{A polygonal trefoil formed by gluing lines}
\label{fig:polygonaltrefoil}
\end{figure}

\subsubsection{Using lines}
In the following lemma, by a tangle we will mean an embedding in $\mathbb{RP}^3$ of the interval $[0,1]$

\begin{lemma}
 Given a polygonal tangle with $N$ edges and $c$ crossings, there exists a real
 algebraic parametrization $k$ of a knot and two points $A$  and $B$ in
 $\mathbb{RP}^1$ such that the image under $k$ of one of the two components of
 $\mathbb{RP}^1 \setminus \{A, B\}$ is a section of the tubular neighbourhood of
 the given polygonal tangle and the images of $A$ and $B$ are the end-points of
 the given tangle. 
\end{lemma}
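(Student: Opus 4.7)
The plan is to realize the polygonal tangle by gluing the $N$ projective lines that carry its edges. Each edge lies on a unique line $L_i \subset \mathbb{RP}^3$, a degree-$1$ real rational curve homeomorphic to $\mathbb{RP}^1$; label them in the order of the tangle traversal, so consecutive lines $L_i, L_{i+1}$ meet at the polygonal vertex $v_i$ and the tangle endpoints are $A_0 \in L_1$ and $B_0 \in L_N$. Each $L_i$ is split by its two distinguished points (either two vertices, or a vertex and an endpoint) into an ``edge arc'' $L_i^e$ running along the tangle and a ``complement arc'' $L_i^c$ going the long way around the projective line. The number $c$ of crossings plays no role in the construction itself; it is recorded only for the bookkeeping of degree in the sequel.

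The construction proceeds by induction on the number of lines glued. First, glue $L_1$ and $L_2$ at $v_1$ via Theorem~\ref{maingluingtheorem}, producing a degree-$2$ rational curve $C_2$; using part~(3) of that theorem I designate $v_2 \in L_2$ as the preserved point on $L_2$, while $A_0 \in L_1$ survives automatically because the gluing modifies $L_1$ only inside an arbitrarily small neighbourhood of $v_1$, which I shrink to exclude $A_0$. Inductively, suppose a degree-$k$ curve $C_k$ has been produced which passes through $A_0, v_1, \ldots, v_k$ and agrees with $L_1 \cup \cdots \cup L_k$ outside small neighbourhoods of $v_1, \ldots, v_{k-1}$. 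Glue $C_k$ to $L_{k+1}$ at $v_k$ with $v_{k+1}$ designated as the preserved point on $L_{k+1}$. After $N-1$ such gluings, one obtains a degree-$N$ real rational knot $k : \mathbb{RP}^1 \to \mathbb{RP}^3$ passing through $A_0$ and $B_0$.

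By part~(1) of Theorem~\ref{maingluingtheorem}, the image of $k$ agrees with $L_1 \cup \cdots \cup L_N$ outside arbitrarily small neighbourhoods of the vertices, and inside each such neighbourhood it is the hyperbolic smoothing described in part~(2). By choosing the signs of the gluing parameters $\epsilon_1, \epsilon_2$ at each vertex so that the smoothing pairs the edge arm of $L_i$ with the edge arm of $L_{i+1}$ rather than edge with complement, the topological circle $k(\mathbb{RP}^1)$ decomposes into two arcs with common endpoints $A_0, B_0$: the first traverses $L_1^e, L_2^e, \ldots, L_N^e$ consecutively through the hyperbolic smoothings at $v_1, \ldots, v_{N-1}$ and, once the gluing neighbourhoods are shrunk enough, lies inside any prescribed tubular neighbourhood of the polygonal tangle; the second traverses the complement arcs in order. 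Setting $A, B \in \mathbb{RP}^1$ to be the preimages of $A_0, B_0$ under $k$ then gives the conclusion.

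The main obstacle is the inductive bookkeeping required to keep each upcoming vertex $v_{k+1}$ on the active curve: part~(3) of Theorem~\ref{maingluingtheorem} preserves only a single distinguished point on each curve being glued, so the preservation slot on $L_{k+1}$ must be spent on $v_{k+1}$ for the next step, while $A_0$, the already-smoothed vertices, and every other previously distinguished point must be kept out of the active gluing neighbourhood via the ``outside $U$'' clause of part~(1); since only finitely many points need to be excluded, this is achievable by shrinking the neighbourhoods. A secondary technical matter is the consistent choice of signs for $\epsilon_1, \epsilon_2$ so that every hyperbolic smoothing bends along the polygonal tangle rather than across it, but this is a purely local choice at each vertex and does not propagate.
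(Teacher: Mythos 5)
Your construction is essentially the paper's: both proofs induct on the number of edges, at each step gluing the line carrying the next edge to the curve built so far at their common vertex, and both use part~(3) of Theorem~\ref{maingluingtheorem} to keep the \emph{next} gluing site (the free endpoint of the newly attached line) exactly on the resulting curve. There is, however, one step in your bookkeeping that fails as written: the claim that $A_0$ ``survives automatically'' because the gluing only modifies $L_1$ near $v_1$. Part~(1) of the gluing theorem only guarantees that outside the neighbourhood $U$ the glued curve is a \emph{section of a tubular neighbourhood} of the old curves --- it does not pass exactly through any of their points except the two designated points of part~(3) (the images of the parameters antipodal to the gluing parameter). Since the lemma demands that the image of $A$ be exactly the tangle endpoint, and since an exactly-incident point is what makes the next gluing legal, you cannot leave the preservation slot on the already-built curve $C_k$ unused: it must be spent on $A_0$ at every step. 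This is precisely how the paper arranges it --- it reparametrizes $C_k$ so that the gluing point is the image of $0$ and $A_0$ is the image of $\infty$ (hence preserved), while the free endpoint of the new line is that line's preserved point. With that single correction your argument coincides with the paper's; your additional remark about choosing the signs of $\epsilon_1,\epsilon_2$ so that the local hyperbola pairs edge-arm with edge-arm is a useful elaboration of a point the paper leaves implicit.
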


\begin{proof}
  The result is trivially true for a tangle with only one edge: 
  the line containing the edge is a rational knot and let $A$ and $B$ be the
  pre-image of the end-points of the segment.

 We will proceed by induction on the number of edges. Let the given tangle be the union of line segments, $\cup_1^N L_i$. Let $K_n := \cup_1^n L_i$ with the end
 points connected by a line segment.

Assume that the lemma is true for for all tangles with $n$ edges. Then, given a
tangle which is a union of line segments $\cup_1^{n+1} L_i$, there exists a real algebraic knot
$k_1$ and points $A_1$ and $B_1$ of that parameter, so that the image of the
interval $[A_1, B_1]$ lies in the tubular neighbourhood of the tangle $\cup_1^n
L_i$. Reparametrize $k_1$ so that it $A_1$ is the image of $-\infty$ and $B_1$
is the image of 0. Choose a parametrization of the line containing the line
segment $L_{n+1}$ at the point $B_1$ so that the image of 0 is $B_1$  (also the
non-free end-point of $L_1$) and the image of $\infty$ is the free end point of
$L_{n+1}$ which we will call $C$. Then by the theorem on gluing, we know that
there is a knot $k$ in the tubular neighbourhood of $k_1 \cup L$. By the
theorem, we also know that the points $A$ and $C$ are unchanged under the gluing
since they were the images of $-\infty$ and $\infty$. By the gluing theorem, the
image from $A$ to $C$ lies in the tubular neighbourhood of the polygonal tangle.
\end{proof}

\begin{lemma}
Given a polygonal knot with $n \geq 3$ edges and $c$ crossings, there exists a real
algebraic knot which is projectively isotopic to the given knot after some
crossing changes. 
\end{lemma}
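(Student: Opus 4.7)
My plan is to delete one edge from the polygonal knot, apply the preceding lemma to realize the remaining tangle as a section of a tubular neighbourhood by a rational knot $k$, and then observe that $k$ itself differs from the given polygonal knot by only finitely many crossing changes — which is precisely what the lemma asks for. Concretely, write the polygonal knot as $K = \bigcup_{i=1}^n L_i$, remove one edge, say $L_n$, and apply the previous lemma to the polygonal tangle $T = \bigcup_{i=1}^{n-1} L_i$. This produces a real algebraic knot $k:\mathbb{RP}^1 \to \mathbb{RP}^3$ with distinguished parameters $A, B \in \mathbb{RP}^1$ such that $k(A), k(B)$ are the endpoints of $T$ and $k([A,B])$ is a section of a tubular neighbourhood of $T$. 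Since $k$ is automatically a closed rational curve, the complementary arc $k(\mathbb{RP}^1 \setminus [A,B])$ is a smooth arc in $\mathbb{RP}^3$ joining $k(B)$ back to $k(A)$; it plays the role of the deleted edge $L_n$ in closing up the rational knot.

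In a generic projection the main arc $k([A,B])$ has the same diagram as $T$, with small wiggles inside the tubular neighbourhood absorbed by Reidemeister moves. The closing arc, by contrast, follows an uncontrolled path in $\mathbb{RP}^3$ and may introduce further crossings, both with itself and with the main arc. However, the closing arc and the edge $L_n$ are two arcs in $\mathbb{RP}^3$ sharing the same endpoints, so they are homotopic rel endpoints; a generic such homotopy meets both itself and the rest of the curve transversally in finitely many points, each of which corresponds to a single crossing change in the projected diagram. Pushing the closing arc onto $L_n$ therefore converts $k$ into a knot ambient isotopic to $K$ after finitely many crossing changes, as required.

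The main obstacle is conceptual rather than technical: because the statement allows an unspecified number of crossing changes, the argument reduces to a general position observation once the previous lemma is in place. The substantive work — needed to convert this into a quantitative degree bound in the spirit of Section~\ref{sec:braid} — is to control both the number and the location of the extra crossings introduced by the closing arc in terms of $n$ and $c$, which would then combine with Theorem~\ref{switchingcrossings} to yield an explicit upper bound on the degree of a real rational representative of $K$.
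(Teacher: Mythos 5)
The paper actually states this lemma without any proof; the closest thing to a proof is the argument for the theorem that immediately follows it, and your strategy (delete an edge, apply the tangle lemma, treat the closing arc as a perturbed version of the deleted edge, and absorb the discrepancy into crossing changes) is essentially that argument. The only notable difference is that the paper first subdivides a vertex by inserting a tiny extra edge inside a small ball and deletes \emph{that}, so that the two endpoints $A$, $B$ lie in a prescribed small ball; this gives better control over where the closing arc re-enters the diagram, but for the purely qualitative statement your version of the reduction is equally natural.

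There is, however, one genuine gap in your argument: the claim that ``the closing arc and the edge $L_n$ are two arcs in $\mathbb{RP}^3$ sharing the same endpoints, so they are homotopic rel endpoints.'' This is false in general, because $\pi_1(\mathbb{RP}^3)\cong\mathbb{Z}/2$: the loop formed by the two arcs may represent the nontrivial class. Concretely, the glued curve produced by the tangle lemma has degree $n-1$ and therefore realizes the class $(n-1)\bmod 2$ in $H_1(\mathbb{RP}^3;\mathbb{Z}/2)$, whereas the given affine polygonal knot is null-homologous; since a crossing change is supported in a ball and hence preserves the free homotopy class (in particular the $\mathbb{Z}/2$-homology class), no sequence of crossing changes can relate the two when $n$ is even. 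Your proof therefore only works for one parity of $n$ as written, and needs a parity repair --- e.g.\ subdivide one edge of $T$ so that the tangle has an even number of edges, or glue one additional line as in Lemma~\ref{lem:affine} to make the curve affine before comparing it with $K$. To be fair, the paper's own (absent) treatment of this lemma and the proof of the subsequent theorem suffer from exactly the same omission, so this is a point worth flagging in the paper as well; but since you assert the homotopy explicitly, the gap sits squarely in your argument. The remaining steps --- that a section of a tubular neighbourhood of $T$ projects to the same diagram up to Reidemeister moves, and that a generic homotopy of an arc decomposes into isotopies and finitely many crossing changes --- are standard and fine.
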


\begin{theorem}
    Given a polygonal knot with $n$ edges and $k$ crossings, there exists a real
    algebraic knot of degree~$d \leq {n \choose 2} - k$ in its isotopy class.
  \end{theorem}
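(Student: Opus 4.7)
\emph{Plan.} My approach is to combine the tangle lemma, the preceding lemma, and Theorem~\ref{switchingcrossings}. First, I would apply the inductive gluing procedure of the tangle lemma to the $n$ projective lines extending the polygon's edges, gluing them sequentially at the polygon's $n$ vertices. This produces a real algebraic knot $\tilde{K}$ of degree~$n$ whose image lies in a tubular neighborhood of the union of these lines. In a generic projection to a plane, the $n$ projected lines meet in $\binom{n}{2}$ points: $n$ of these are the polygon's vertices (which are smoothed by the gluings), $k$ are the polygon's crossings (faithfully reproduced by $\tilde{K}$ with the correct over/under information, since $\tilde{K}$ tracks the edges closely there), and the remaining $\binom{n}{2} - n - k$ are \emph{spurious} crossings introduced by extending the segments to full projective lines.

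Second, by the preceding lemma, $\tilde{K}$ is isotopic to the given polygonal knot after a sequence of crossing changes localized at the spurious crossings. I would then invoke Theorem~\ref{switchingcrossings} to implement these crossing changes by gluing ellipses. A single change costs~$+2$ in degree, but the spurious crossings arise in a structured way from the projective arrangement and can be grouped so that each group is resolved by a single crossing change followed by a Reidemeister~II (or~I) move, absorbing the excess crossings without further degree cost. Tracking the bookkeeping, the total contribution of the spurious crossings to the degree is at most $\binom{n}{2} - n - k$, so the resulting real rational knot has degree at most $n + (\binom{n}{2} - n - k) = \binom{n}{2} - k$, as claimed.

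The main obstacle is the geometric bookkeeping in the second step: one has to verify that the $\binom{n}{2} - n - k$ spurious crossings in the projection of $\tilde{K}$ always admit a grouping into Reidemeister-removable configurations after performing at most $(\binom{n}{2}-n-k)/2$ crossing changes, and to handle the parity when this count fails to be an integer (by exploiting the freedom to choose the projection direction and the gluing parameters $\epsilon_i$, or by appealing to a local Reidemeister~I loop at one extra crossing). This step is ultimately a combinatorial analysis of the line arrangement, relying on the fact that, away from small neighborhoods of the polygon's vertices, $\tilde{K}$ inherits the arrangement's incidence structure, so the spurious crossings can be treated as crossings of a planar line arrangement and matched to the appropriate Reidemeister moves.
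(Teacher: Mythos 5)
Your overall strategy matches the paper's: glue the $n$ lines supporting the edges via the tangle lemma (degree $n$), split the $\binom{n}{2}$ pairwise intersections into $n$ vertices, $k$ genuine crossings, and $\binom{n}{2}-n-k$ spurious ones, and pay for the spurious ones with crossing changes at $+2$ each. But your write-up misses the structural point that makes the isotopy argument work. The glued curve traverses essentially all of each projective line, not just the segments, so $\tilde K$ is not ``the polygon with extra crossings'': the parameter interval $[A,B]$ maps to an arc $k'$ shadowing the polygon, while the complementary interval maps to a long leftover arc $k''$ running through the unused portions of the lines, which closes up into a companion that may be linked with $k'$ and knotted in its own right. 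All spurious crossings involve $k''$. The paper's argument is to unlink $k''$ from $k'$ and unknot $k''$ by crossing changes, so that the real algebraic knot becomes the connected sum of the polygonal knot with an unknot. Your appeal to ``the preceding lemma'' does not supply this (that lemma is stated without proof and is really an output of this construction).

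More importantly, the bound $\binom{n}{2}-k$ forces you to change at most $\frac{1}{2}\left(\binom{n}{2}-n-k\right)$ crossings, since each change costs $2$ in degree. You correctly flag this as the main obstacle, but the mechanism you propose --- pairing spurious crossings so that one crossing change plus a Reidemeister~II move absorbs two at a time --- is unjustified: the spurious crossings of a generic line arrangement do not come in bigons, and nothing in the construction guarantees such a pairing. The factor of one half comes instead from the $k'\cup k''$ decomposition: to unlink, push $k''$ entirely above or entirely below $k'$, whichever requires changing fewer of the inter-component crossings (at most half of them); to unknot $k''$, make its diagram descending or ascending from a basepoint, whichever needs fewer changes (again at most half of its self-crossings). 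Without this step your argument only yields degree $n+2\left(\binom{n}{2}-n-k\right)$, which is weaker than the stated bound. (To be fair, the paper also leaves the one-half count implicit, but its $k'$/$k''$ decomposition is what makes that count attainable by standard arguments, whereas your Reidemeister-pairing route does not obviously close.)
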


  \begin{proof}
  First, we prove that there exists such a knot (see
  Figure~\ref{fig:polygonaltrefoil}). Choose a vertex of the polygonal knot and
  replace it by a very small edge $L$ contained in a small enough ball around
  the vertex so that the resulting knot now has one more edge. On removing the
  edge, one obtains a tangle and therefore by the previous lemma, there exists a
  real algebraic knot and points $A$ and $B$ of the parametrization so that the
  image under  the parametrization of the interval $[A, B]$ is isotopic to the
  tangle. Note that $A$ and $B$ lie in the ball. If we connect the end points of
  the image, under this parametrization, of $[A,B]$, inside the sphere, then
  the resulting topological knot is isotopic to the given polygonal knot. Let us
  call this topological knot $k'$. On the other hand, the image of complement of
  $[A,B]$ can also be joined at the end points to obtain a topological knot
  $k''$ which may be linked to $k'$. By crossing changes, which can be achieved
  by gluing circles to the original real algebraic knot that was constructed, $k'$ and $k''$ can be unlinked. Denote the resulting knots
  by $k_1'$ and $k_1''$. Observe that $k_1'$ is still isotopic to the given
  polygonal knot.  Furthermore, by crossing
  changes, one can ensure that $k_1''$ is the unknot. Therefore, our real
  algebraic knot is isotopic to the connected sum of the given polygonal knot
  and the unknot and is therefore isotopic to the given knot.

Define a sub-polygonal knot of a polygonal knot to be a union of a subset of the segments of the polygonal knot. We will show, by induction, that we can construct a sub-polygonal knot.

  Now we compute the degree: For $n$ edges, $n$ lines are needed. That increases
  the degree to $n$ however some over crossings may need to be changed to under
  crossings and vice-versa in order to unlink and unknot the cylinder knot.
  Observe that each vertex corresponds to an intersection of two lines, which
  accounts for $n$ out of the $n \choose 2$ pairs. Also, $k$ pairs of
  intersections in the projection account for the over and under crossings of the
  original knot.
  \end{proof}

\begin{remark}
 If we weaken the definition of an real algebraic knot, we can reduce the degree. Let us consider $S^1 := [0,1]/0\sim 1$. Viewing $S^1$ in this way, a knot parametrization is equivalent to a map $k : [0, 1] \to \mathbb{RP}^3$ such that $k(0) = k(1)$. Let $q : [0, 1] \to S^1 = [0, 1] / 0 \sim 1$ denote the quotient map from the closed interval to the the circle. If we define  parametrization $k : S^1 \to \mathbb{RP}^3$ to be real algebraic if $k \circ q$ can be represented as a the restriction of a rational map from the projective line to $\mathbb{RP}^3$, then given a polygonal knot with $n$ edges, there exists a real algebraic knot of degree $n+1$ which is differentiable at one point and smooth at all other points. This follows from the previous proof by using part 3 of theorem~\ref{maingluingtheorem}. Such  a knot will not be smooth at one point by choosing $\epsilon_i$ appropriately, one can make the derivatives match and ensure that it is differentiable at that point. 
\end{remark}
\subsubsection{Using thin ellipses}
\begin{figure}[h]
  \begin{center}
 \includegraphics[width=10cm]{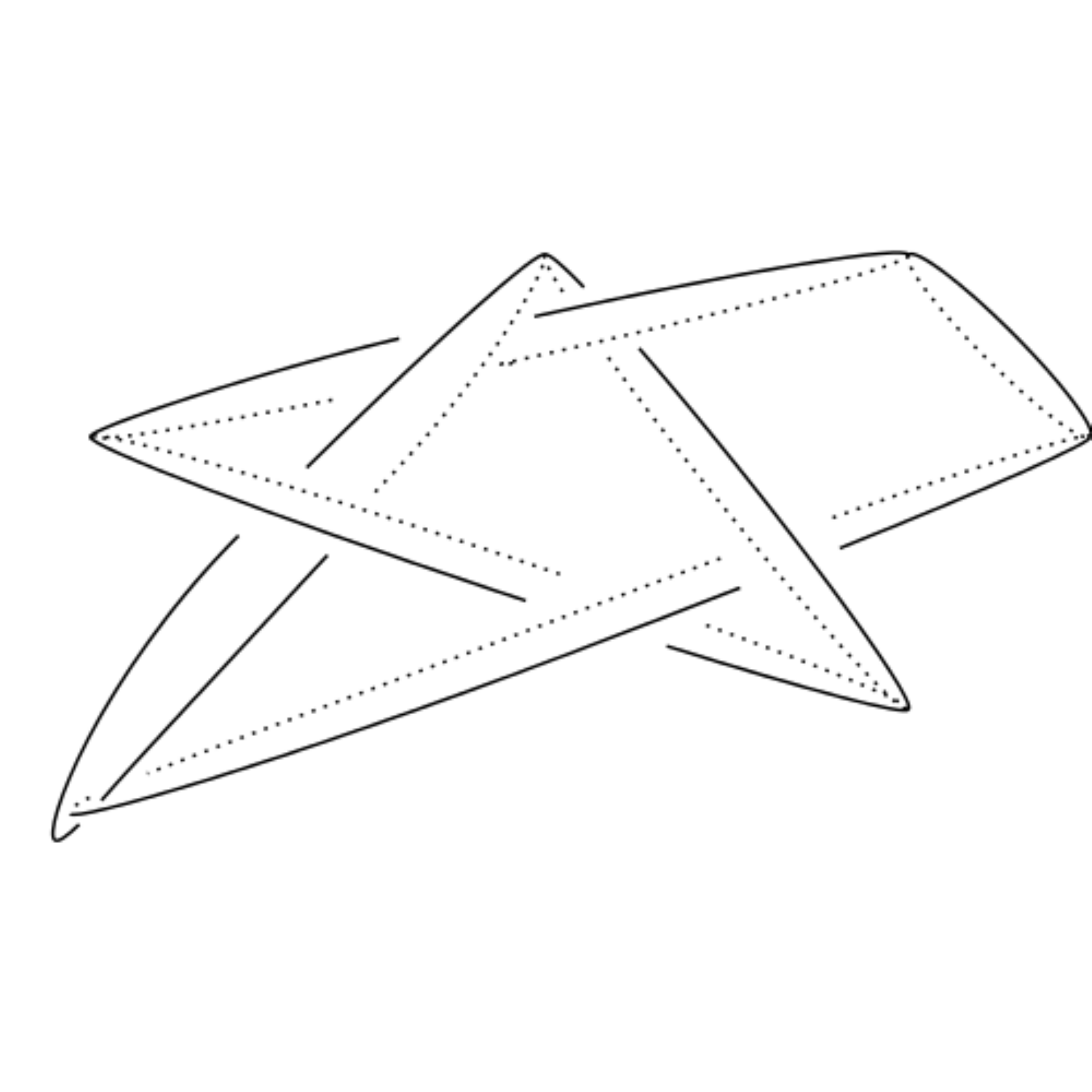} 
  \end{center}
  \label{thinellipse}
  \caption{The double of the knot constructed by gluing thin ellipses. The dotted part is unwanted and can be unlinked by crossing changes
    that can be done by gluing circles}
\end{figure}  

\begin{theorem}
 Given a polygonal knot with $n$ edges and $c$ crossings, there exists a real
 rational knot of degree $2(n+c)$ which is isotopic to it.
\end{theorem}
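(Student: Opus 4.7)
The plan is to replace each of the $n$ edges of the polygonal knot by a very thin ellipse running along the edge, so that the whole polygonal knot is approximated by the union of these ellipses. By choosing the ellipses thin enough and appropriately positioned, consecutive ellipses (corresponding to edges sharing a vertex) meet transversally at a single point near the common vertex, which lets us apply Theorem~\ref{maingluingtheorem} to glue them. Starting with $n$ disjoint ellipses and performing $n-1$ successive gluings (one for each vertex except a chosen ``final'' vertex, so as to avoid any self-gluing) produces a single connected real rational curve. Since each ellipse has degree~$2$ and gluing adds degrees, this curve has degree~$2n$, and in projection resembles the ``doubled'' polygonal knot shown in Figure~\ref{thinellipse}: essentially the boundary of a thin tubular strip along the polygon.

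Next I would analyze the knot type of the resulting doubled curve. At each of the $c$ crossings of the polygonal projection, the doubled projection exhibits several local crossings, and the doubled curve is in general \emph{not} isotopic to the original polygonal knot: there is an ``unwanted'' portion (the dotted part in Figure~\ref{thinellipse}) that interacts with the intended knot, potentially producing extra linking or knotting. I would verify that by performing exactly one well-chosen crossing change at each of the $c$ original crossings, one can unlink the unwanted portion from the intended component, so that the resulting curve is isotopic to the original polygonal knot (the unwanted part becoming an unlinked unknot that can be absorbed via a connected sum with the unknot, as in the lines-based argument of the previous section).

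Each crossing change is realized by gluing an ellipse, which by Theorem~\ref{switchingcrossings} adds $2$ to the degree. Combining the two contributions gives total degree at most $2n + 2c = 2(n+c)$, which is the bound claimed.

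The main obstacle I anticipate is justifying that precisely one crossing change per original crossing is enough: one needs a careful local analysis at each of the $c$ crossings to confirm that the thin-ellipse doubling introduces exactly one ``bad'' local crossing there, and that switching it simultaneously fixes the over/under information and disentangles the unwanted strand. This will involve tracking orientations of the thin ellipses and checking that the global gluing choices made around the polygon (in particular, the choice of which vertex to leave unglued) are compatible with this local remedy.
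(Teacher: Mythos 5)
Your proposal follows essentially the same route as the paper: glue $n$ thin ellipses along the edges to obtain (the double of) the polygonal knot at degree $2n$, then perform one crossing change per original crossing via Theorem~\ref{switchingcrossings}, adding $2c$ to reach $2(n+c)$. The paper packages your "one crossing change per crossing suffices" worry into a short lemma about untwisted doubles (stated with proof omitted as easy), so your proposal is, if anything, slightly more candid about the step that needs checking.
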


Recall the definition of a untwisted double knot~\cite[Example~4.D.4]{rolfsen1976knots}
companion is the unknot and satellite is the knot shown in the figure.

We will need the following lemma whose proof is easy:

\begin{lemma}
  An untwisted double knot with crossing number $c$ can be transformed into the
  original knot by a maximum of $c$ crossing changes.
\end{lemma}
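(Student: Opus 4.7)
My plan is to realize the untwisted double $D(K)$ as arising from the original knot $K$ by a local doubling procedure at each crossing, and then identify one crossing change per crossing of $K$ that reverses this doubling. First I would fix a diagram of $K$ realizing its crossing number $c$ and build the standard diagram of $D(K)$ by replacing each arc with two parallel arcs carrying the Seifert framing (so that the two parallel copies have linking number zero) and inserting a single clasp so that the result is a one-component knot. Under this construction, each crossing of $K$ produces a ``band crossing'' in $D(K)$: a local $2 \times 2$ block of four same-signed ordinary crossings where one band passes over the other, together with two extra crossings from the global clasp.

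Next I would carry out a local picture analysis at a single band crossing. I would identify one crossing within the block whose change, combined with Reidemeister II cancellations on the resulting pair of opposite-signed crossings, locally disentangles the two parallel bands so that they interact in the same pattern as the corresponding single strands do in the diagram of $K$. Performing this local modification simultaneously at each of the $c$ band crossings of $D(K)$ then converts the doubled diagram into one whose underlying structure mirrors that of $K$, together with a parallel companion arc whose band-level interactions with $K$ have all been trivialized.

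Once all $c$ local doublings have been undone in this way, the parallel companion is an unknotted arc attached to $K$ only at the now-trivial clasp; this clasp collapses via a Reidemeister I move and the companion contracts along the framed annulus, leaving a diagram isotopic to $K$. Thus at most $c$ crossing changes suffice to transform $D(K)$ into $K$, as claimed.

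The main (and essentially only) obstacle is the local picture analysis at a single band crossing: explicitly identifying which of the four same-signed crossings in the block to change, and verifying that the resulting pair of opposite-signed crossings admits a Reidemeister II cancellation that reduces the block to the single corresponding crossing of $K$. This is a small, explicit planar check and is precisely what the authors signal as ``easy'' once the correct local picture is drawn; everything else in the proof is simply applying this local move globally and cleaning up with standard Reidemeister moves.
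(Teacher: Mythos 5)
The paper itself offers no argument for this lemma---it is declared ``easy'' and stated without proof---so there is nothing to compare your proposal against; it has to stand on its own, and as written it does not. The central local step fails. In the doubled diagram, a crossing of $K$ produces a $2\times 2$ block of four crossings, but these four crossings lie on four \emph{distinct} pairs of strands ($a_i$ versus $b_j$ for $i,j\in\{1,2\}$), each pair meeting exactly once inside the block. Hence after changing any single crossing of the block there is no pair of strands crossing twice with opposite sense, so no bigon is created and no Reidemeister~II cancellation is available locally. The ``small, explicit planar check'' you defer to cannot come out the way you describe; any simplification necessarily involves crossings from other blocks and is a global matter, not a local one.

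There is a second, independent gap. The natural way to spend exactly $c$ changes is to change, in each block, the one crossing at which the companion strand $\beta$ passes over the retained strand $\alpha$. This does separate the two strands (now $\alpha$ lies entirely above $\beta$), but $\beta$ still carries its own $c$ self-crossings $(\beta_P,\beta_Q)$, which reproduce the diagram of $K$ with reversed orientation. The result is therefore $K \# K'$, where $K'$ is the (reversed) copy of $K$ traced by $\beta$, not $K$ itself: your claim that after the $c$ local moves ``the parallel companion is an unknotted arc'' is unjustified and generally false, and removing the extra factor costs further crossing changes, pushing the naive diagram-level count above $c$. A bound of $c$ does follow by a different route---one crossing change at the clasp turns the untwisted double into the unknot, and at most $\lfloor c/2\rfloor$ further changes turn the unknot into $K$ by the standard descending-diagram estimate, giving $1+\lfloor c/2\rfloor\le c$ once $c\ge 1$---but that argument passes through an intermediate isotopy rather than operating inside the fixed doubled diagram, which is what the paper's subsequent degree count actually requires. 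Either way, your proof as written does not establish the lemma.
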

\qed

\begin{lemma}
Consider a polygonal knot with $n$  edges and $c$ crossings and ends $A$ and
$B$. There exists a real rational knot of degree $2(n+c)$
isotopic to it this knot.
\end{lemma}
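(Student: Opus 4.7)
The plan is to realise the polygonal knot by first building a degree-$2n$ rational curve that is topologically an untwisted double of it, and then correct it by $c$ crossing changes, each costing two additional units of degree.

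First I would associate to each edge $L_i$ (for $i = 1, \ldots, n$) a very thin ellipse $E_i$ of degree~$2$ lying in a narrow slab around $L_i$, arranged so that $E_i$ and $E_{i+1}$ meet transversally in a single point placed close to the common vertex of $L_i$ and $L_{i+1}$. The first and last ellipses are positioned so that they pass through the prescribed ends $A$ and $B$. Iterated application of Theorem~\ref{maingluingtheorem} to the pairs $(E_i, E_{i+1})$ at their prescribed intersection points yields a single real rational curve $k_0$ of degree $2n$, which outside small neighbourhoods of the gluing points is a section of a tubular neighbourhood of $\bigcup_i E_i$, hence of the polygonal knot itself.

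Next I would verify that $k_0$ is isotopic to the untwisted double of the polygonal knot. Each thin ellipse $E_i$ contributes two nearly parallel strands running along $L_i$; at every vertex these two strands get stitched to the two strands of $E_{i+1}$ by the hyperbolic smoothing supplied by the gluing construction. Using part~3 of Theorem~\ref{maingluingtheorem}, which lets one prescribe matching tangent directions (and equal ratios of tangent magnitudes) at the preserved points, together with a judicious choice of signs for the gluing parameters $\epsilon_1, \epsilon_2$ at each vertex, I can ensure the stitching introduces no half-twist. Hence $k_0$ genuinely realises the untwisted double.

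Finally I would invoke the lemma immediately preceding this one: the untwisted double can be converted into the original polygonal knot by at most $c$ crossing changes. Each such crossing change is implemented by gluing one further ellipse via Theorem~\ref{switchingcrossings} and raises the degree by~$2$. After all $c$ crossing changes, I obtain a real rational knot isotopic to the original polygonal knot, of degree $2n + 2c = 2(n+c)$. The main obstacle is the framing step: one must rule out accidental half-twists at the vertices, which would turn the geometric doubling into a twisted double and invalidate the application of the preceding lemma. Handling this framing coherently all the way around the polygon, and in particular consistently across the endpoints $A$ and $B$, is the delicate bookkeeping the proof really rests on, and is precisely where part~3 of Theorem~\ref{maingluingtheorem} is indispensable.
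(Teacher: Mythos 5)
Your proposal is correct and follows essentially the same route as the paper: glue $n$ thin ellipses, one along each edge, to obtain a degree-$2n$ rational curve realising the untwisted double, then apply the preceding lemma and Theorem~\ref{switchingcrossings} to undo at most $c$ crossings at a cost of degree $2$ each. Your discussion of the framing at the vertices makes explicit a point the paper leaves to the figure, but the argument is the same.
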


\begin{proof}
  Consider the double of the knot. Observe that it can be constructed by gluing
  using $n$ ellipses as shown in the figure, therefore the resulting real
  algebraic knot has degree $2n$. By the previous lemma, we merely need to
  switch a maximum of $c$ crossings, which can be done by gluing $c$ circles
  (lemma~\ref{switchingcrossings}),
  thereby increasing the degree by a maximum of $2c$. The resulting degree is $2(n+c)$.
\end{proof}

\def\cprime{$'$}

\end{document}